\pgfplotsset{compat=1.18}
\theoremstyle{definition}
\newtheorem{construction}[theorem]{Construction}
\newcolumntype{C}[1]{>{\centering\arraybackslash}m{#1}}
\newcommand\restr[2]{{
  \left.\kern-\nulldelimiterspace 
  #1 
  \right|_{#2} 
  }}
\let\leq\leqslant
\let\geq\geqslant
\newcommand{\VC}{\dim_{\operatorname{VC}}}
\newcommand{\Tr}{\operatorname{Tr}}
\newcommand{\M}{m}
\begin{document}

\title*{Recent advances in arrow relations and traces of sets}

\author{Mingze Li, \\Jie Ma, and \\Mingyuan Rong}

\institute{Mingze Li \at School of Mathematical Sciences, University of Science and Technology of China, Hefei, Anhui 230026, China, \email{lmz10@mail.ustc.edu.cn}
\and Jie Ma \at School of Mathematical Sciences, University of Science and Technology of China, Hefei, Anhui 230026, China, and Yau Mathematical Sciences Center, Tsinghua University, Beijing 100084, China, \email{jiema@ustc.edu.cn}
\and Mingyuan Rong \at School of Mathematical Sciences, University of Science and Technology of China, Hefei, Anhui 230026, China,  \email{rong\_ming\_yuan@mail.ustc.edu.cn}}
\maketitle
\abstract{The arrow relation, a central concept in extremal set theory, captures quantitative relationships between families of sets and their traces. Formally, the arrow relation $(n, m) \rightarrow (a, b)$ signifies that for any family $\mathcal{F} \subseteq 2^{[n]}$ with $|\mathcal{F}| \geqslant m$, there exists an $a$-element subset $T \subseteq [n]$ such that the trace $\mathcal{F}_{|T} = \{ F \cap T : F \in \mathcal{F} \}$ contains at least $b$ distinct sets. 
This survey highlights recent progress on a variety of problems and results connected to arrow relations. 
We explore diverse topics, broadly categorized by different extremal perspectives on these relations, offering a cohesive overview of the field.}

\section{Introduction}\label{sec 1}

For a positive integer $n$, we let $[n]=\{1,2,\dots ,n\}$. We denote the power set of $[n]$ as $2^{[n]}=\{T: T\subseteq [n]\}$.
The study of finite sets and their extremal properties is a fundamental area in combinatorics. A key concept in this field is the \emph{trace} of a family of sets. For a family $\mathcal{F} \subseteq 2^{[n]}$, the trace of $\mathcal{F}$ on a subset $T \subseteq [n]$ is defined as $\mathcal{F}_{\mid T} = \{ F \cap T : F \in \mathcal{F} \}$. We say $T$ is \emph{shattered} by $\mathcal{F}$ if $\mathcal{F}_{\mid T} = 2^T$. A natural problem arises: How large must a family $\mathcal{F} \subseteq 2^{[n]}$ be to guarantee that it shatters some $k$-element subset?

This question was resolved in the early 1970s through independent work by Sauer~\cite{S1972}, Perles and Shelah~\cite{SP1972}, and Vapnik and Chervonenkis~\cite{V1968,VC2015}, confirming a conjecture of Erd\H{o}s~\cite{FP1991}.
Their result, now known as the Sauer-Shelah Lemma, gives a tight bound on the size of $\mathcal{F}$.

\begin{theorem}[Sauer-Shelah Lemma]\label{Sauer}
Let $n \geqslant k \geqslant 1$. For any family $\mathcal{F} \subseteq 2^{[n]}$ satisfying
$$
|\mathcal{F}| \geqslant \sum_{i=0}^{k-1} \binom{n}{i}+1,
$$
there exists a $k$-element set $T \subseteq [n]$ with $\mathcal{F}|_T = 2^T$. Moreover, this bound is tight.
\end{theorem}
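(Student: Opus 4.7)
The plan is to prove the bound via the classical \emph{down-shifting} argument, which actually yields the stronger inequality due to Pajor: the number of subsets of $[n]$ shattered by $\mathcal{F}$ is at least $|\mathcal{F}|$. From Pajor's inequality the theorem follows instantly, since if no $k$-set were shattered, then every shattered set would lie in $\bigcup_{i=0}^{k-1}\binom{[n]}{i}$, bounding $|\mathcal{F}|$ by $\sum_{i=0}^{k-1}\binom{n}{i}$ and contradicting the hypothesis.

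To establish Pajor's inequality, for each $i \in [n]$ I would define the shift $S_i(\mathcal{F})$ by replacing every $F \in \mathcal{F}$ with $i \in F$ and $F \setminus \{i\} \notin \mathcal{F}$ by $F \setminus \{i\}$, and leaving all other sets unchanged. Two properties are then the backbone of the proof: (a) $|S_i(\mathcal{F})| = |\mathcal{F}|$, which is immediate, and (b) every set $T \subseteq [n]$ shattered by $S_i(\mathcal{F})$ is already shattered by $\mathcal{F}$. I would then iterate $S_1, S_2, \ldots, S_n$ repeatedly; since $\sum_{F \in \mathcal{F}} |F|$ strictly decreases at each nontrivial application, the process terminates in a \emph{stable} family $\mathcal{F}^\ast$ satisfying $S_i(\mathcal{F}^\ast) = \mathcal{F}^\ast$ for every $i$. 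A short argument shows stability forces $\mathcal{F}^\ast$ to be downward closed, and for downward closed families every member $F$ is itself shattered (its trace on $F$ contains all subsets of $F$). Thus the number of sets shattered by $\mathcal{F}^\ast$ is at least $|\mathcal{F}^\ast| = |\mathcal{F}|$, and by property (b) these sets are also shattered by $\mathcal{F}$.

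For the tightness claim, I would exhibit $\mathcal{F} = \{F \subseteq [n] : |F| \leq k-1\}$, which has exactly $\sum_{i=0}^{k-1}\binom{n}{i}$ elements; on any $k$-set $T$ the trace omits $T$ itself, so no $k$-set is shattered.

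The main obstacle is verifying property (b), namely that shifting cannot create shattered sets that were not already shattered. The case $i \notin T$ is routine, but for $i \in T$ one must show that every pattern $A \subseteq T$ appearing in $S_i(\mathcal{F})|_T$ can be lifted to a pattern in $\mathcal{F}|_T$, and this requires a careful case analysis distinguishing whether $i \in A$ together with tracking whether the shift actually moved the relevant witness set. Getting this bookkeeping right — in particular, ensuring that the two patterns $A$ and $A \setminus \{i\}$ both still appear after the shift — is the delicate technical step on which the whole argument rests.
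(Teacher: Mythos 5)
Your plan is correct and is the standard shifting proof. The paper itself does not prove Theorem \ref{Sauer} (it only cites the original sources), but Section \ref{sec 2} develops exactly the operation you use under the name \emph{squashing}, and Section \ref{6 Extremal families} records Pajor's inequality (Theorem \ref{Pajor}) together with the observation that it implies the Sauer--Shelah Lemma; so your route is assembled from the paper's own ingredients. Two remarks. First, your property (b) as literally phrased --- that every pattern in $S_i(\mathcal{F})_{\mid T}$ ``lifts'' to a pattern in $\mathcal{F}_{\mid T}$ --- is false if read as set containment $S_i(\mathcal{F})_{\mid T}\subseteq \mathcal{F}_{\mid T}$ (take $\mathcal{F}=\{\{i\}\}$ and $T=\{i\}$: the trace after shifting is $\{\emptyset\}$, which is not contained in $\{\{i\}\}$). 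What is true, and what you actually need, is either the cardinality inequality $|S_i(\mathcal{F})_{\mid T}|\leqslant |\mathcal{F}_{\mid T}|$ (the paper's Proposition \ref{squash trace}) or the shattering-preservation statement: if $S_i(\mathcal{F})$ shatters $T\ni i$, then for each $A\subseteq T$ with $i\in A$ a witness $G\in S_i(\mathcal{F})$ with $G\cap T=A$ must satisfy both $G\in\mathcal{F}$ and $G\setminus\{i\}\in\mathcal{F}$ (a set containing $i$ survives the shift only when its $i$-deleted copy was already present), which simultaneously supplies the patterns $A$ and $A\setminus\{i\}$ in $\mathcal{F}_{\mid T}$. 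Your closing sentence indicates you have this case analysis in mind, so this is a phrasing issue rather than a gap. Second, you could bypass Pajor's inequality entirely by using the paper's Lemma \ref{1.1} with $b=2^k$: once the family is hereditary, having more than $\sum_{i=0}^{k-1}\binom{n}{i}$ members forces a member $F$ with $|F|\geqslant k$, and heredity makes every $k$-subset of $F$ shattered. Your tightness example $\binom{[n]}{\leq k-1}$ is the standard one and is fine.
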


In other words, this establishes that families of size at least $1 + \sum_{i=0}^{k-1} \binom{n}{i}$ must contain a trace of size $2^k$ on some $k$-element set. To study the general problem on traces of any size,  Hajnal~\cite{B1972} introduced the arrow relation through the following definition:

\begin{definition}
    For positive integers $n, m, a, b$, the \emph{arrow relation} $(n, m) \rightarrow (a, b)$ denotes that for every family $\mathcal{F} \subseteq 2^{[n]}$ with $|\mathcal{F}| \geqslant m$, there exists some $a$-element set $T \subseteq [n]$ such that $|\mathcal{F}_{\mid T}| \geqslant b$.
\end{definition}
Note that the Sauer-Shelah Lemma can be expressed via the following arrow relation
$$\left(n,\, 1+\sum_{i=0}^{k-1}\binom{n}{i}\right) \rightarrow (k,\, 2^{k}).$$ For convenience, for a family $\mathcal{F}\subseteq 2^{[n]}$, we also write $\mathcal{F} \rightarrow (a, b)$ if there exists an $a$-element subset $T \subseteq [n]$ with $|\mathcal{F}_{|T}| \geqslant b$. Conversely, $\mathcal{F} \not\rightarrow (a, b)$ signifies that for every $a$-element subset $T$, we have $|\mathcal{F}_{|T}| < b$. With these notions, $(n, m) \rightarrow (a, b)$ is equivalent to that $\mathcal{F} \rightarrow (a, b)$ holds for every family $\mathcal{F}\subseteq 2^{[n]}$ with $|\mathcal{F}|\geqslant m$.

Since the introduction of arrow relations, this framework has spurred a wide array of problems and results in extremal combinatorics. 
In recent years, research has primarily concentrated on exploring various extremal aspects of arrow relations.
A natural direction is to determine the minimum size of families to ensure a trace of given size, that is to determine the minimum $m$ such that $(n, m) \rightarrow (a, b)$ holds when $n$, $a$ and $b$ are given. Since the
case $b=2^{a}$ is solved by the Sauer-Shelah Lemma, it remains to consider $b<2^{a}$. The first step was made by Frankl~\cite{F1983}, who determined this bound for the $3$-vertex trace of size $7$. Recently, Frankl and Wang~\cite{FW2024} investigated the cases of $4$-vertex traces. 
Researchers are also interested in determining the maximum $b$ such that $(n, m) \rightarrow (a, b)$ holds when $n$, $m$ and $a$ are given.
This line of inquiry led to the introduction of the so-called \emph{trace function}, which was proposed and studied by Bollob\'{a}s and Radcliffe~\cite{BR1995}. Further advancements were achieved by Alon, Moshkovitz, and Solomon~\cite{AMS2019}.
Apart from these, another important direction is to consider the maximum $m$ such that $(n, m) \rightarrow (n-1, m-s)$ holds when $n$ and $s$ are given. Bondy \cite{B1972} and Bollob\'as \cite{L2007} first studied this problem, and later Frankl~\cite{F1983} presented a general method, resolving the cases where $s$ is just below powers of 2. Very recently, several new results in this area were presented in \cite{PS2021, LMR2024, RS2025}. 
There are several other topics closely related to arrow relations, which will be introduced in Sections \ref{6 Extremal families}, \ref{sec:forbidden_config}, and \ref{sec 8}.

This survey aims to provide an accessible entry point for researchers new to the field, serving as a useful resource for both orientation and reference. 
While we have made every effort to include representative works, the rapidly evolving nature of this area may have led to the omission of some valuable contributions.

We structure the discussion by grouping results based on their shared research themes. 
The paper is organized into the following sections: \ref{sec 1}. Introduction, \ref{sec 2}. A fundamental lemma of Frankl, \ref{sec 3}. Defect Sauer results (a term introduced in \cite{BR1995}), \ref{sec 4}. Single-element removal, \ref{sec 5}. Trace functions, \ref{6 Extremal families}. Extremal families, \ref{sec:forbidden_config}. Forbidden configurations in matrices, \ref{sec 8}. Tur\'{a}n numbers for traces, and \ref{sec 9}. Open problems.

\subsection*{Notation} Throughout this survey, we use the following notations:
\begin{itemize}
    \item Let $\mathbb{N}=\{0,1,2,\dots\}$ denote the set of all non-negative integers, $\mathbb{N}^{+}=\{1,2,3,\dots\}$ denote the set of all positive integers and $\mathbb{R}$ denote the set of all real numbers.
    \item For $n\in \mathbb{N}^{+}$, let $[n]=\{1,2,\dots,n\}$ denote the set of the first $n$ positive integers. For $a,b\in \mathbb{N}^{+}$ with $a\leq b$, let $[a:b]=\{a,a+1,\dots,b\}$ denote the set of consecutive integers from $a$ to $b$.
   \item For $s\in \mathbb{N}^{+}$ and a set $T$, let $2^T$ denote the collection of all subsets of $T$. Moreover, $\binom{T}{s}$ denotes the collection of all subsets of $T$ with exactly $s$ elements, and $\binom{T}{\leq s}$ denotes the collection of all subsets of $T$ with at most $s$ elements.
    \item For two sets $A$ and $B$, we write $A\triangle B=(A\setminus B)\cup (B\setminus A)$ as the \emph{symmetric difference} between $A$ and $B$.
    \item For a family $\mathcal{F}\subseteq 2^{[n]}$ and $v\in [n]$, let $d_{\mathcal{F}}(v)=|\{F\in \mathcal{F}: v\in F\}|$ denote the \emph{degree} of $v$ in $\mathcal{F}$ and $\delta(\mathcal{F})=\min_{w\in [n]}d_{\mathcal{F}}(w)$ denote the \emph{minimum degree} of $\mathcal{F}$.
    \item For a graph $G$, $\mbox{ex}(n, G)$ denotes the \emph{Tur\'{a}n number} of $G$, which is the maximum number of edges in an $n$-vertex graph that does not contain $G$ as a subgraph. For integers $n\geq k\geq 2$, the \emph{Tur\'an graph} $T(n,k)$ denotes the unique balanced complete $k$-partite graph on $n$ vertices.
    \item Similarly, for a $k$-uniform hypergraph $\mathcal{F}$, $\operatorname{ex}_k(n, \mathcal{F})$ denotes the maximum number of edges in an $n$-vertex $k$-uniform hypergraph which does not contain $\mathcal{F}$ as a subhypergraph.
    \item  The {\it Tur\'an density} of a $k$-uniform hypergraph  $\mathcal{F}$ is the limit $\pi(\mathcal{F})=\lim _{n \rightarrow \infty}\binom{n}{k}^{-1} \operatorname{ex}_k(n, \mathcal{F})$.
    \item For integers $n$, $m$, $a$ and $b$, the notion $(n, m) \not\rightarrow (a, b)$ denotes the negation of $(n, m) \rightarrow (a, b)$.
    \item Let $\mathbf{0}_{q}$ denote the $q \times 1$ matrix (i.e., a column vector of length $q$) whose entries are all $0$, $\mathbf{1}_{p}$ denote the $p \times 1$ matrix with all entries equal to $1$, and $\mathbf{I}_{k}$ denote the $k \times k$ identity matrix.
\end{itemize}

\section{A fundamental lemma of Frankl}\label{sec 2}
We begin by introducing a basic result of Frankl~\cite{F1983} on the study of arrow relation through the lens of a special but fundamental class of families: \emph{hereditary} families, which are closed under taking subsets. Formally, a family $\mathcal{F}$ is called \emph{hereditary} if for every $F' \subseteq F \in \mathcal{F}$, it holds that $F' \in \mathcal{F}$. Frankl~\cite{F1983} showed that to establish an arrow relation, it suffices to verify it for hereditary families, which are often more convenient to handle. 
This is formalized in the following lemma.

\begin{lemma}[Frankl~\cite{F1983}]\label{1.1}
The following statements are equivalent for any integers $n,m,a,b\in \mathbb{N}^{+}$:
\begin{itemize}
    \item $(n, m) \rightarrow (a, b)$.
    \item For every hereditary family $\mathcal{F} \subseteq 2^{[n]}$ with $|\mathcal{F}| = m$, $\mathcal{F} \rightarrow (a, b)$ holds.
\end{itemize}
\end{lemma}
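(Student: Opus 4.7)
The forward direction is immediate, since $(n,m) \to (a,b)$ quantifies over all families of size at least $m$ and hereditary families are a special case. The content lies in the reverse direction: one must reduce an arbitrary family of size $m$ to a hereditary one while only making the traces smaller, not larger.

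The plan is to use the standard \emph{down-shift} (compression) operation. For each $i \in [n]$, define $S_i(\mathcal{F}) = \{\sigma_i(F) : F \in \mathcal{F}\}$, where $\sigma_i(F) = F \setminus \{i\}$ when $i \in F$ and $F \setminus \{i\} \notin \mathcal{F}$, and $\sigma_i(F) = F$ otherwise. I would verify three standard properties: (i) $|S_i(\mathcal{F})| = |\mathcal{F}|$, since $\sigma_i$ creates no collisions; (ii) $|S_i(\mathcal{F})|_T| \leq |\mathcal{F}|_T|$ for every $T \subseteq [n]$; and (iii) iterating the $S_i$ in any order terminates (e.g., the potential $\sum_{F \in \mathcal{F}} |F|$ strictly decreases at every nontrivial step), and a family fixed by all $S_i$ is automatically hereditary (from $F \in \mathcal{F}$ one can delete elements one at a time).

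Granted these, the reverse direction proceeds as follows. Let $\mathcal{F} \subseteq 2^{[n]}$ with $|\mathcal{F}| \geq m$ and fix any subfamily $\mathcal{F}' \subseteq \mathcal{F}$ with $|\mathcal{F}'| = m$. By (i) and (iii), repeatedly applying shifts yields a hereditary family $\mathcal{F}^* \subseteq 2^{[n]}$ with $|\mathcal{F}^*| = m$. The hereditary hypothesis then produces an $a$-set $T$ with $|\mathcal{F}^*|_T| \geq b$, and applying (ii) along the chain of shifts gives $|\mathcal{F}'|_T| \geq |\mathcal{F}^*|_T| \geq b$, whence $|\mathcal{F}|_T| \geq b$.

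The only genuinely technical point is (ii). For $i \notin T$ one has $\sigma_i(F) \cap T = F \cap T$, so equality is trivial. For $i \in T$, the argument I would give is to build an injection from $S_i(\mathcal{F})|_T$ into $\mathcal{F}|_T$ by sending $A$ to $A$ itself if $A \in \mathcal{F}|_T$, and to $A \cup \{i\}$ otherwise; one must check, using the definition of $\sigma_i$, that in the second case $A \cup \{i\}$ really does lie in $\mathcal{F}|_T$ and that the map is injective. This is the crux of the lemma, and the rest is bookkeeping.
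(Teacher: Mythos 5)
Your proposal is correct and follows essentially the same route as the paper: both reduce to hereditary families via the down-shift (squash) operation $S_i$, using the facts that shifting preserves cardinality, never increases any trace $|\mathcal{F}_{|T}|$, and terminates in a hereditary family. The only cosmetic differences are that the paper argues via a minimal counterexample with respect to the potential $\sum_{F\in\mathcal{F}}|F|$ rather than iterating to a fixed point, and proves the trace inequality by comparing the pairs $\{A, A\cup\{i\}\}$ rather than by your (equivalent, and verifiable) injection.
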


In the rest of this section, we will present a proof of this lemma due to Frankl~\cite{F1983}. We first need to introduce an operation on families called \emph{squashing}. Frankl~\cite{F1983} and Alon~\cite{A1983} used squashing techniques to generalize Theorem \ref{Sauer}. It is worth mentioning that this operation is also known as \emph{shifting} in contexts such as \cite{A1988} and \cite{A2013}. 
Formally, given a family $\mathcal{F} \subseteq 2^{[n]}$ and a vertex $v \in [n]$, the squash operation at $v$ transforms $\mathcal{F}$ into the following new family:
$$S_v(\mathcal{F}) = \{F_v : F \in \mathcal{F}\},$$
where each modified set $F_v$ is defined as:
$$F_v =
\begin{cases}
F \setminus \{v\} & \text{if } v \in F \in \mathcal{F} \text{ and } F \setminus \{v\} \notin \mathcal{F}, \\
F & \text{otherwise}.
\end{cases}$$

By definition, it is easy to see that $|S_{v}(\mathcal{F})|=|\mathcal{F}|$, and $\sum_{F\in S_{v}(\mathcal{F})}|F|\leqslant \sum_{F\in \mathcal{F}}|F|$,
where this inequality becomes strict if a change is made in this operation. 
Therefore, we observe that it takes a finite number of squash operations, transforming $\mathcal{F}$ into a family $\tilde{\mathcal{F}}$, which satisfies $S_{v}(\tilde{\mathcal{F}})=\tilde{\mathcal{F}}$ for any $v\in [n]$. 
We call such a family $\tilde{\mathcal{F}}$ a \emph{squashed family} obtained from $\mathcal{F}$. 
The following proposition follows directly from the definition.
\begin{proposition}
A squashed family is hereditary.
\end{proposition}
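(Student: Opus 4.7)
The plan is to prove the contrapositive form: for every $F\in\tilde{\mathcal{F}}$ and every $v\in F$, the set $F\setminus\{v\}$ must already lie in $\tilde{\mathcal{F}}$. Once this single-element removal property is established, the hereditary property follows by iterating: if $F'\subseteq F\in\tilde{\mathcal{F}}$ and we enumerate $F\setminus F'=\{v_1,\dots,v_k\}$, then peeling off one vertex at a time keeps us inside $\tilde{\mathcal{F}}$, so $F'\in\tilde{\mathcal{F}}$.

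To carry out the key step, I would argue by contradiction. Suppose $F\in\tilde{\mathcal{F}}$, $v\in F$, but $F\setminus\{v\}\notin\tilde{\mathcal{F}}$. By the definition of the squash operator, we then have $F_v=F\setminus\{v\}$, so $F$ itself is not the image of $F$ under the operation. The goal is to show that $F$ is not the image of any other $G\in\tilde{\mathcal{F}}$ either, which contradicts $S_v(\tilde{\mathcal{F}})=\tilde{\mathcal{F}}$.

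Suppose $G\in\tilde{\mathcal{F}}$ satisfies $G_v=F$. There are two cases from the definition of $G_v$. If $G_v=G$, then $G=F$, and by the definition this requires either $v\notin F$ or $F\setminus\{v\}\in\tilde{\mathcal{F}}$, both contradicting our assumptions. If instead $G_v=G\setminus\{v\}$, then $F=G\setminus\{v\}$, forcing $v\notin F$, again a contradiction. Hence $F\notin S_v(\tilde{\mathcal{F}})$, contradicting squash-invariance.

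The argument is almost entirely bookkeeping, so there is no serious obstacle; the only point that requires a touch of care is the case analysis showing that $F$ cannot be produced as $G_v$ for any other $G$. This essentially reflects the fact, already noted in the preceding discussion, that the map $F\mapsto F_v$ is injective (so $|S_v(\mathcal{F})|=|\mathcal{F}|$), and that $v$ can only disappear from a set under squashing, never appear.
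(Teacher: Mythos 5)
Your proof is correct and fills in exactly the routine case analysis that the paper leaves implicit when it says the proposition ``follows directly from the definition'': the key observation that if $F\in\tilde{\mathcal{F}}$, $v\in F$, and $F\setminus\{v\}\notin\tilde{\mathcal{F}}$, then no $G\in\tilde{\mathcal{F}}$ can have $G_v=F$, contradicting $S_v(\tilde{\mathcal{F}})=\tilde{\mathcal{F}}$, together with the peeling-off iteration, is the intended argument. (Minor quibble: you announce a ``contrapositive'' but actually argue by contradiction; this does not affect validity.)
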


In fact, in order to get a squashed family from $\mathcal{F}$, it is enough to apply the squash $S_{v}$ once for every $v\in [n]$.
More precisely, let $\mathcal{F}\subseteq 2^{[n]}$ and $\sigma$ be a permutation of $[n]$, then the transformed family $$S_{\sigma}(\mathcal{F}):=S_{\sigma(n)}(S_{\sigma(n-1)}(\dots (S_{\sigma(1)}(\mathcal{F}))\dots))$$ is a squashed family, thus it is a hereditary family. 
It is worth noting that $S_{\sigma}(\mathcal{F})$ may vary depending on the choice of the permutation $\sigma$.

The following propositon reveals a close relation between squash operations and trace of sets.
\begin{proposition}[Frankl~\cite{F1983}]\label{squash trace}
Let $\mathcal{F}\subseteq 2^{[n]}$ and $v\in [n]$. For every $T\subseteq [n]$, we have
\[
|S_{v}(\mathcal{F})_{\mid T}|\leqslant |\mathcal{F}_{\mid T}|.
\]
\end{proposition}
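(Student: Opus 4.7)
The plan is to build an explicit injection $\phi \colon S_v(\mathcal{F})_{|T} \to \mathcal{F}_{|T}$. I would first dispose of the trivial case $v \notin T$: since the squash operation either leaves $F$ alone or removes $v$, we have $F_v \cap T = F \cap T$ whenever $v \notin T$, so in fact $S_v(\mathcal{F})_{|T} = \mathcal{F}_{|T}$.

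For the main case $v \in T$, I would define
\[ \phi(A) = \begin{cases} A, & A \in \mathcal{F}_{|T}, \\ A \cup \{v\}, & A \notin \mathcal{F}_{|T}. \end{cases} \]
To see that $\phi$ lands in $\mathcal{F}_{|T}$, I would unpack the definition of the squash: every $A \in S_v(\mathcal{F})_{|T} \setminus \mathcal{F}_{|T}$ must arise from some $F \in \mathcal{F}$ with $v \in F$ and $F \setminus \{v\} \notin \mathcal{F}$, giving $F_v = F \setminus \{v\}$ and $A = (F \setminus \{v\}) \cap T$; since $v \in T$, we conclude $A \cup \{v\} = F \cap T \in \mathcal{F}_{|T}$, and moreover $v \notin A$.

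The delicate part is injectivity. Two inputs can only collide across the two clauses, so I would suppose $\phi(A_1) = A_1 = A_2 \cup \{v\} = \phi(A_2)$ with $A_1 \in \mathcal{F}_{|T}$ and $A_2 \notin \mathcal{F}_{|T}$. This forces $v \in A_1$. Now any $A_1 \in S_v(\mathcal{F})_{|T}$ that contains $v$ must come from some $F \in \mathcal{F}$ with $F_v = F$ (otherwise $v$ would be stripped), which, combined with $v \in F$, forces $F \setminus \{v\} \in \mathcal{F}$; consequently $A_2 = A_1 \setminus \{v\} = (F \setminus \{v\}) \cap T \in \mathcal{F}_{|T}$, contradicting the choice of $A_2$. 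Thus $\phi$ is injective and the desired inequality follows.

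The main obstacle is the case analysis driven by the definition of $F_v$ — there are three scenarios ($v \notin F$; $v \in F$ with $F \setminus \{v\} \in \mathcal{F}$; $v \in F$ with $F \setminus \{v\} \notin \mathcal{F}$), and both the well-definedness and the injectivity arguments hinge on pinning down exactly which scenario produced a given trace. An equivalent and arguably cleaner presentation would partition $2^T$ into pairs $\{A, A \cup \{v\}\}$ with $v \notin A$ and show pair-by-pair that the number of pair-members surviving in $S_v(\mathcal{F})_{|T}$ is at most the number surviving in $\mathcal{F}_{|T}$, the key observation being that $A \cup \{v\} \in S_v(\mathcal{F})_{|T}$ forces both $A$ and $A \cup \{v\}$ to lie in $\mathcal{F}_{|T}$.
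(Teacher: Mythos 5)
Your proof is correct, and it is essentially the paper's argument in different packaging: the paper proves exactly the pair-by-pair count you sketch in your final paragraph, partitioning $2^T$ into pairs $\{A, A\cup\{v\}\}$ and using the same key fact that $A\cup\{v\}\in S_v(\mathcal{F})_{|T}$ forces both $A$ and $A\cup\{v\}$ into $\mathcal{F}_{|T}$. Your explicit injection $\phi$ is a valid repackaging of that count, and your case analysis for well-definedness and injectivity is sound.
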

\begin{proof}
    If $v\notin T$, we obviously have $S_{v}(\mathcal{F})_{\mid T}=\mathcal{F}_{\mid T}$. We now assume that $v\in T$. For any subset $A\subseteq T\setminus \{v\}$, we claim that
    \[
    |S_{v}(\mathcal{F})_{\mid T}\cap \{A,A\cup \{v\}\}|\leqslant |\mathcal{F}_{\mid T}\cap \{A,A\cup \{v\}\}|.
    \]
    If $A\notin S_{v}(\mathcal{F})_{\mid T}$ and $A\cup \{v\}\notin S_{v}(\mathcal{F})_{\mid T}$, this inequality is trival. If $A\cup \{v\}\in S_{v}(\mathcal{F})_{\mid T}$, then both $A$ and $A\cup \{v\}$ are members of $S_{v}(\mathcal{F})_{\mid T}\cap \mathcal{F}_{\mid T}$. If $A\in S_{v}(\mathcal{F})_{\mid T}$ and $A\cup \{v\}\notin S_{v}(\mathcal{F})_{\mid T}$, we have either $A\in \mathcal{F}_{\mid T}$ or $A\cup \{v\}\in \mathcal{F}_{\mid T}$. Thus we prove our claim. Then we have
    \[
    |S_{v}(\mathcal{F})_{\mid T}|=\sum_{A\subseteq T\setminus \{v\}}|S_{v}(\mathcal{F})_{\mid T}\cap \{A,A\cup \{v\}\}|\leqslant \sum_{A\subseteq T\setminus \{v\}}|\mathcal{F}_{\mid T}\cap \{A,A\cup \{v\}\}|=|\mathcal{F}_{\mid T}|,
    \]
    which proves the proposition.
\end{proof}

The following proof of Lemma~\ref{1.1}, building on Proposition \ref{squash trace}, is due to Frankl~\cite{F1983}. 
\begin{proof}[Proof of Lemma~\ref{1.1}]
By definition, the first statement obviously implies the second. Therefore, it is enough to show that if the second statement holds, then for any family $\mathcal{F}\subseteq 2^{[n]}$ with $|\mathcal{F}|\geqslant m$, we have $\mathcal{F}\rightarrow (a, b)$.
Suppose the contrary that there exists such a family $\mathcal{F}\subseteq 2^{[n]}$ with $\mathcal{F} \not\rightarrow (a, b)$, and subject to this, we may assume $\mathcal{F}$ minimizes $\sum_{F\in \mathcal{F}}|F|$. 
By assumption, $\mathcal{F}$ is not hereditary. Therefore, there exists $v\in [n]$ such that $\sum_{F\in S_{v}(\mathcal{F})}|F|< \sum_{F\in \mathcal{F}}|F|$. The minimality of $\sum_{F\in \mathcal{F}}|F|$ implies that $S_{v}(\mathcal{F})\rightarrow (a,b)$. Then there exists $T\subseteq [n]$ with $|T|=a$ such that $|S_{v}(\mathcal{F})_{\mid T}|\geqslant b$. By Proposition \ref{squash trace}, we know that
\[
|\mathcal{F}_{\mid T}|\geqslant |S_{v}(\mathcal{F})_{\mid T}|\geqslant b,
\]
which is a contradiction with $\mathcal{F} \not\rightarrow (a,b)$. So we can conclude that $(n, m) \rightarrow (a, b)$.
\end{proof}

\section{Defect Sauer results}\label{sec 3}

The Sauer-Shelah Lemma motivates the study of ``defect Sauer results'' (a term introduced in \cite{BR1995}), which study the minimum size required for a family $\mathcal{F}$ to ensure a trace of size at least $b$ on some $a$-element set.
Expressed through arrow relation terminology, this problem reduces to determining the minimum $m_0$ such that the relation $(n, m) \rightarrow (a, b)$ holds for all $m \geq m_0$ given fixed parameters $a$ and $b$. This is formalized by the following notation.

\begin{definition}
    Let $\M(n,a,b)$ denote the minimum integer $m$ such that $(n,m) \rightarrow (a,b)$.
\end{definition}

This notation allows us to express the Sauer-Shelah Lemma (\cref{Sauer}) concisely as
\begin{theorem}[Sauer-Shelah Lemma]For $k \geqslant 1$,
$$\M(n,k,2^{k})=\sum_{i=0}^{k-1}\binom{n}{i}+1.$$
\end{theorem}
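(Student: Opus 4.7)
The plan is to derive this theorem as a direct consequence of Frankl's reduction (\cref{1.1}), which collapses the problem to hereditary families where shattering is essentially built into the structure. The argument splits naturally into an upper bound on $\M(n,k,2^k)$ (verifying the arrow relation) and a matching lower bound (exhibiting an extremal family).

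For the upper bound $\M(n,k,2^k) \leq \sum_{i=0}^{k-1}\binom{n}{i}+1$, by \cref{1.1} it suffices to verify that every hereditary family $\mathcal{F} \subseteq 2^{[n]}$ with $|\mathcal{F}| \geq \sum_{i=0}^{k-1}\binom{n}{i}+1$ satisfies $\mathcal{F} \rightarrow (k, 2^{k})$. The key observation is that heredity makes every member of $\mathcal{F}$ a candidate shattered set: if $F \in \mathcal{F}$, then by heredity $2^{F} \subseteq \mathcal{F}$, and consequently for any $T \subseteq F$ one has $\mathcal{F}_{\mid T} \supseteq \{A \cap T : A \subseteq T\} = 2^{T}$. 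Since $[n]$ contains only $\sum_{i=0}^{k-1}\binom{n}{i}$ subsets of size at most $k-1$, the size hypothesis forces the existence of some $F \in \mathcal{F}$ with $|F| \geq k$. Picking any $k$-subset $T \subseteq F$ then yields $\mathcal{F}_{\mid T} = 2^{T}$, as desired.

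For tightness, I would exhibit the family $\mathcal{F} = \binom{[n]}{\leq k-1}$, which has $|\mathcal{F}| = \sum_{i=0}^{k-1}\binom{n}{i}$ and contains no set of size $k$ or larger. Hence for every $k$-element $T \subseteq [n]$, the set $T$ itself cannot appear in $\mathcal{F}_{\mid T}$, since each $F \in \mathcal{F}$ satisfies $|F \cap T| \leq |F| \leq k-1$. Thus $|\mathcal{F}_{\mid T}| \leq 2^{k}-1 < 2^{k}$, giving $\mathcal{F} \not\rightarrow (k, 2^{k})$ and so $\M(n,k,2^{k}) \geq \sum_{i=0}^{k-1}\binom{n}{i}+1$.

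The main obstacle here is essentially absent, because \cref{1.1} has already absorbed the technical content of the result: once one is allowed to restrict attention to hereditary families, the shattering problem reduces to a single-line pigeonhole argument on set sizes. If Frankl's lemma were not available, the genuine work would have to be performed by a direct shifting argument, or alternatively by invoking Pajor's inequality that any family shatters at least $|\mathcal{F}|$ distinct subsets; either route would constitute the actual substance of the proof.
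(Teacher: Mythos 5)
Your proof is correct, and it follows exactly the paper's methodology: the paper states this result as the classical Sauer--Shelah Lemma without reproving it, but its proof of \cref{(3,7)} uses precisely your strategy (reduce to hereditary families via \cref{1.1}, observe that a hereditary family shatters every subset of each of its members, count the sets of size at most $k-1$, and match with the lower-bound family $\binom{[n]}{\leq k-1}$). Nothing further is needed.
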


In 1983, Frankl~\cite{F1983} pioneered the study of defect Sauer results by resolving a conjecture of Bondy and Lov\'{a}sz, proving the following theorem.
\begin{theorem}[Frankl~\cite{F1983}]\label{(3,7)}
$$\M(n,3,7)=\left\lfloor \frac{n^2}{4} \right\rfloor + n + 2.$$
\end{theorem}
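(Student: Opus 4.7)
The plan is to prove two matching bounds. For the lower bound $\M(n,3,7) \geq \lfloor n^2/4 \rfloor + n + 2$, I would exhibit a family of the maximal admissible size $\lfloor n^2/4 \rfloor + n + 1$ that avoids a $3$-set trace of size $7$. A natural candidate is
\[
\mathcal{F}_0 \;=\; \{\emptyset\} \,\cup\, \binom{[n]}{1} \,\cup\, E\big(T(n,2)\big),
\]
where $E(T(n,2))$ is the edge set of the balanced complete bipartite graph viewed as a family of $2$-element sets. Since $T(n,2)$ is triangle-free, every $3$-set $T \subseteq [n]$ contains at most two elements of $E(T(n,2))$, so $|\mathcal{F}_{0\mid T}| \leqslant 1 + 3 + 2 = 6$, certifying $\mathcal{F}_0 \not\rightarrow (3,7)$.

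For the upper bound, I would appeal to \cref{1.1} to restrict attention to hereditary families $\mathcal{F} \subseteq 2^{[n]}$ with $\mathcal{F} \not\rightarrow (3,7)$ and show $|\mathcal{F}| \leqslant \lfloor n^2/4 \rfloor + n + 1$. The first convenient observation is that for hereditary $\mathcal{F}$ one has $\mathcal{F}_{\mid T} = \mathcal{F} \cap 2^T$ for every $T \subseteq [n]$, so the hypothesis becomes $|\mathcal{F} \cap 2^T| \leqslant 6$ for every $T \in \binom{[n]}{3}$. An easy reduction (or induction on $n$) allows us to assume every element of $[n]$ lies in some member of $\mathcal{F}$, which by hereditarity places $\emptyset$ and all $n$ singletons into $\mathcal{F}$; these already contribute $4$ elements to each $\mathcal{F} \cap 2^T$, so the trace constraint forces $\mathcal{F}$ to contain at most two subsets of $T$ of size $\geqslant 2$ for every $3$-set $T$.

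The heart of the argument is the structural deduction that $\mathcal{F}$ contains no set of size $\geqslant 3$. Indeed, if some $S \in \mathcal{F}$ with $|S|=3$, hereditarity forces all three $2$-subsets of $S$ into $\mathcal{F}$ as well, yielding $|\mathcal{F} \cap 2^S| = 8 > 6$, a contradiction. Since $\mathcal{F}$ is hereditary, this rules out every set of size at least $3$ too. Writing $E = \mathcal{F} \cap \binom{[n]}{2}$ and viewing it as a graph $G$ on $[n]$, the trace condition now says precisely that $G$ contains no triangle, and Mantel's theorem gives $|E| \leqslant \lfloor n^2/4 \rfloor$. Hence $|\mathcal{F}| = 1 + n + |E| \leqslant \lfloor n^2/4 \rfloor + n + 1$, completing the proof together with the construction.

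The main potential obstacle is pinpointing the structural consequence of hereditarity that forces the absence of any $3$-element member of $\mathcal{F}$; once this is established the problem collapses to the classical Mantel bound, which is why the Turán graph $T(n,2)$ shows up in the extremal construction. I expect the rest to be routine bookkeeping rather than genuine difficulty.
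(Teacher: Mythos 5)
Your proposal is correct and follows essentially the same route as the paper: reduce to hereditary families via \cref{1.1}, observe that a $3$-element member would force a full trace of size $8$, deduce that the $2$-element members form a triangle-free graph (so Mantel applies), and match this with the construction $\binom{[n]}{0}\cup\binom{[n]}{1}\cup E(T(n,2))$. The intermediate reduction to assume all singletons lie in $\mathcal{F}$ is unnecessary — hereditarity already puts the relevant singletons into any triangle's trace, and the final count $|\mathcal{F}|\leqslant 1+n+|E|$ holds as an inequality in any case — but this does not affect correctness.
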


The proof of this theorem is as follows,
which combines Mantel's theorem on $\text{ex}(n,K_3)$ with the hereditary family reduction of the arrow relation via Lemma~\ref{1.1}.

\begin{proof}[Proof of Theorem \ref{(3,7)}]
We establish the upper bound by contradiction. Suppose $\M(n,3,7) >\lfloor \frac{n^2}{4} \rfloor + n + 2$.
By Lemma~\ref{1.1}, we may assume there exists a hereditary family $\mathcal{F}\subseteq 2^{[n]}$ such that $|\mathcal{F}|=  \lfloor \frac{n^2}{4} \rfloor + n + 2$ and $\mathcal{F}\not \rightarrow (3, 7)$.
As $\mathcal{F}$ is a hereditary family, if there exists set $T\in\mathcal{F}$ and $|T|=3$, then $|\mathcal{F}_{|T}|=|2^T|=8$, a contradiction. Thus we may assume that every set $F$ in the hereditary family $\mathcal{F}$ has $|F|\leqslant 2$. 
Moreover, if $\{\{a,b\},\{a,c\},\{b,c\}\}\subseteq\mathcal{F}$ for some $a,b,c\in [n]$, then $|\mathcal{F}_{|\{a,b,c\}}|\geqslant 7$, contradicting the assumption $\mathcal{F}\not \rightarrow (3, 7)$. Thus the collection of $2$-element sets in $\mathcal{F}$ constitutes a $K_3$-free graph. Applying Mantel's theorem yields that
$$ 2 + n + \left\lfloor \frac{n^2}{4} \right\rfloor=|\mathcal{F}| = \left|\mathcal{F}\cap \binom{[n]}{0}\right| + \left|\mathcal{F}\cap \binom{[n]}{1}\right| + \left|\mathcal{F}\cap \binom{[n]}{2}\right| \leq 1 + n + \mbox{ex}(n,K_3)= 1 + n + \left\lfloor \frac{n^2}{4} \right\rfloor,$$
a contradiction. This  establishes the upper bound $\M(n,3,7)\leq\lfloor \frac{n^2}{4} \rfloor + n + 2$.

For the lower bound, let $\mathcal{F}=\binom{[n]}{0}\cup\binom{[n]}{1}\cup \mathcal{G}$, where $\mathcal{G}\subseteq\binom{[n]}{2}$ denotes the edge set of the Tur\'{a}n graph $T(n,2)$. Then $|\mathcal{F}|=\lfloor \frac{n^2}{4} \rfloor+n+1$ and $\mathcal{F}\not \rightarrow (3, 7)$, which yields $\M(n,3,7)\geqslant\lfloor \frac{n^2}{4} \rfloor + n + 2$.
\end{proof}

Employing similar arguments with a more detailed analysis, one can determine $$\M(n,4,11)=\mbox{ex}(n,K_{4})+n+2=\left\lfloor \frac{n^2}{3} \right\rfloor+n+2.$$
Following these developments, Bollob\'{a}s and Radcliffe~\cite{BR1995} further determined the exact value for $\M(n,4,12)$ in 1995.

\begin{theorem}[Bollob\'{a}s and Radcliffe~\cite{BR1995}]\label{(4,12)}
For the case $n=6$, $\M(6,4,12)=24$. And for $n\neq 6$,
$$ \M(n,4,12)=\binom{n}{2}+n+2.$$
\end{theorem}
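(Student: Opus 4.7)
For the lower bound I would give two constructions. When $n\neq 6$, take $\mathcal{F}=\binom{[n]}{\leq 2}$: on every $4$-set $T$ the trace equals $|\binom{T}{\leq 2}|=11<12$, so $\M(n,4,12)\geq\binom{n}{2}+n+2$. When $n=6$, take $G=T(6,3)$ (with its $12$ edges and $8$ transversal triangles) and adjoin $4$ of these triangles chosen to form an independent set of size $4$ in the $3$-cube ``conflict graph'' (where two transversals are joined when they differ in exactly one coordinate). Together with $\binom{[6]}{\leq 1}$ this yields a hereditary family of size $1+6+12+4=23$; a short case analysis over $4$-sets, split by their intersection pattern with the three parts into types $(2,2,0)$ and $(2,1,1)$, verifies that every trace has size at most $11$, whence $\M(6,4,12)\geq 24$.

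For the matching upper bound, Lemma~\ref{1.1} reduces the problem to hereditary families. Suppose for contradiction that $\mathcal{F}$ is hereditary, satisfies $\mathcal{F}\not\rightarrow(4,12)$, and has size at least $\binom{n}{2}+n+2$ (respectively, at least $24$ when $n=6$). A $4$-set in $\mathcal{F}$ would produce trace $16$, and after peeling off unused vertices every singleton may be assumed to lie in $\mathcal{F}$; writing $G=\mathcal{F}_2$ and $H=\mathcal{F}_3$ then gives $|G|+|H|\geq\binom{n}{2}+1$ (respectively, $+2$). For a $4$-set $T$, denoting by $p(T)$ and $t(T)$ the number of pairs and triples of $T$ in $\mathcal{F}$, heredity makes the trace equal $1+4+p(T)+t(T)$ and forces $p(T)\geq 5$ whenever $t(T)\geq 2$. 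Thus $|\mathcal{F}_{\mid T}|\leq 11$ yields two structural facts: $H$ is \emph{linear} (any two hyperedges share at most one vertex), and every hyperedge $\tau\in H$, being a triangle of $G$, has no common neighbor in $G$.

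To derive the contradiction I would double-count triples $(\tau,d,x)$ with $\tau\in H$, $d\in[n]\setminus\tau$, $x\in\tau$, and $\{x,d\}\in\overline{G}:=\binom{[n]}{2}\setminus G$. Since each pair $(\tau,d)$ contributes at least one such triple and no hyperedge of $H$ spans a missing edge,
\[
(n-3)|H|\leq\sum_{\{u,w\}\in\overline{G}}\bigl(d_H(u)+d_H(w)\bigr)=\sum_{v\in[n]}d_H(v)\,\overline{d}_G(v).
\]
Linearity forces $d_H(v)\leq\lfloor d_G(v)/2\rfloor=\lfloor(n-1-\overline{d}_G(v))/2\rfloor$, and combining this with Cauchy--Schwarz on $\sum_v\overline{d}_G(v)=2|\overline{G}|$ gives $|H|\leq|\overline{G}|$ whenever $|\overline{G}|=0$ or $|\overline{G}|\geq n$, contradicting the inequality $|H|\geq|\overline{G}|+1$ implied by $|G|+|H|\geq\binom{n}{2}+1$.

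The main obstacle is the borderline regime $1\leq|\overline{G}|\leq n-1$. When $|\overline{G}|$ is tiny, $G$ is so dense that every triangle possesses common neighbors, forcing $|H|=0$. The genuinely delicate case is $|\overline{G}|\approx n-3$: equality in the double count above requires $\overline{G}$ to be a perfect matching partitioning $[n]$ into $n/2$ pairs, forcing $n$ to be even and $G\cong T(n,n/2)$. The triangles of $G$ are then transversals touching three of the $n/2$ parts, and such a triangle has no common neighbor precisely when every part is touched---a condition equivalent to $n/2=3$, i.e., $n=6$. Finishing the $n=6$ case then requires verifying that linearity restricts $|H|$ to at most $4=|\overline{G}|+1$ (matching the extremal construction and ruling out $|\mathcal{F}|=24$), and the remaining small cases $n=4,5,7$ have to be dispatched by direct inspection.
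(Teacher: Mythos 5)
The survey states this theorem without proof (it is quoted from Bollob\'as--Radcliffe; only the analogous $(3,7)$ case is proved in the text, via Mantel's theorem), so there is no in-paper argument to compare against. Judged on its own, your plan is essentially a correct proof. Both lower-bound constructions check out: $\binom{[n]}{\leq 2}$ gives trace $11$ on every $4$-set, and for $n=6$ the family $T(6,3)$ plus four transversal triangles forming an independent set in the $Q_3$ conflict graph is hereditary of size $23$, and the $(2,2,0)$/$(2,1,1)$ case split correctly shows every $4$-set trace is at most $9$ or $5+5+1=11$. The upper-bound skeleton is also sound: heredity gives trace $=5+p(T)+t(T)$, the condition $p+t\leq 6$ forces $t(T)\leq 1$ (hence linearity of $H$) and the no-common-neighbour property, the double count and the bound $d_H(v)\leq\lfloor d_G(v)/2\rfloor$ are both justified, and the identity $\sum_{\{u,w\}\in\overline{G}}(d_H(u)+d_H(w))=\sum_v d_H(v)\overline{d}_G(v)$ uses correctly that no hyperedge spans a non-edge. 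One remark: your case analysis undersells your own inequality. Writing $E=|\overline{G}|$ and applying Jensen to the concave function $e\mapsto(n-1-e)e/2$ yields $(n-3)|H|\leq(n-1)E-2E^2/n$, i.e.\ $|H|\leq E+\tfrac{2E(n-E)}{n(n-3)}$, and the excess term is strictly less than $1$ for \emph{every} $E$ as soon as $n\geq 7$ (it is at most $\tfrac{n/2}{n-3}$). So the entire ``borderline regime $1\leq E\leq n-1$'' is already closed for $n\geq 7$; the only genuine exception is $n=6$, $E=3$, where equality forces $\overline{G}$ to be a perfect matching, $G\cong T(6,3)$, and linearity gives $|H|\leq 4=E+1$, hence $|\mathcal{F}|\leq 23$. (Your phrase ``perfect matching with $|\overline{G}|\approx n-3$ edges'' only parses at $n=6$, where $n-3=n/2$.) What remains genuinely deferred is the base case $n=5$ (where $n-3=2$ is too small for the averaging to bite and a direct check is needed), the trivial $n=4$, and the bookkeeping in the ``peeling'' step when a singleton is absent; these are routine but must be done for a complete proof.
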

In 2023, Frankl and Wang~\cite{FW2024} established the exact value of $\M(n,4,13)$ in the following theorem.
\begin{theorem}[Frankl and Wang~\cite{FW2024}]\label{(4,13)}
    For $n\geqslant 25$,
\[
\M(n,4,13)=1+\left \lfloor \frac{n+5}{3} \right \rfloor\left \lfloor \frac{n+4}{3} \right \rfloor\left \lfloor \frac{n+3}{3} \right \rfloor.
\]
\end{theorem}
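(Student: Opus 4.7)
The plan is to apply Lemma~\ref{1.1} to restrict to hereditary $\mathcal{F}\subseteq 2^{[n]}$ and then translate the arrow relation into a local graph-theoretic condition. If $\mathcal{F}$ contains any $F$ with $|F|\geq 4$, then for any $T\in\binom{F}{4}$ we have $\mathcal{F}_{|T}=2^T$ of size $16$, so $\mathcal{F}\to(4,13)$ is immediate; thus I may assume $\mathcal{F}\subseteq\binom{[n]}{\leq 3}$. We may also assume every vertex of $[n]$ lies in some set of $\mathcal{F}$, so by hereditariness $\{\emptyset\}\cup\binom{[n]}{1}\subseteq\mathcal{F}$. Writing $G:=\mathcal{F}\cap\binom{[n]}{2}$ as a graph on $[n]$ and $\mathcal{T}:=\mathcal{F}\cap\binom{[n]}{3}$ as a 3-uniform hypergraph, hereditariness also forces each $S\in\mathcal{T}$ to span a triangle of $G$. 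A short computation gives, for every $T\in\binom{[n]}{4}$,
\[
|\mathcal{F}_{|T}|=5+e_G(T)+e_{\mathcal{T}}(T),
\]
where $e_G(T):=|G\cap\binom{T}{2}|$ and $e_{\mathcal{T}}(T):=|\mathcal{T}\cap\binom{T}{3}|$. Hence $\mathcal{F}\not\to(4,13)$ is equivalent to the local inequality $e_G(T)+e_{\mathcal{T}}(T)\leq 7$ for every $4$-set $T$.

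For the lower bound I take $G$ to be the edge set of the balanced complete tripartite graph $T(n,3)$ and let $\mathcal{T}$ consist of all its triangles. Any induced subgraph of $T(n,3)$ on four vertices has at most $5$ edges and at most $2$ triangles (the only extremal case being two vertices in one part and one in each of the other two, which forms $K_4-e$), so the local inequality holds. A direct calculation by cases on $n\bmod 3$ yields
\[
|\mathcal{F}|=1+n+|E(T(n,3))|+t(T(n,3))=\left\lfloor\tfrac{n+5}{3}\right\rfloor\left\lfloor\tfrac{n+4}{3}\right\rfloor\left\lfloor\tfrac{n+3}{3}\right\rfloor,
\]
establishing $\M(n,4,13)\geq 1+\lfloor(n+5)/3\rfloor\lfloor(n+4)/3\rfloor\lfloor(n+3)/3\rfloor$.

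For the matching upper bound I must show $|E(G)|+|\mathcal{T}|\leq|E(T(n,3))|+t(T(n,3))$ for every admissible pair $(G,\mathcal{T})$. The natural dichotomy is on whether $G$ is $K_4$-free: if it is, Tur\'an's theorem gives $|E(G)|\leq|E(T(n,3))|$, while a classical result of Zykov type (bounding the number of $K_3$'s in a $K_4$-free graph) gives $|\mathcal{T}|\leq t(G)\leq t(T(n,3))$. Because both extrema are attained simultaneously by $T(n,3)$, the two bounds add to exactly the target. If instead some $4$-set $T_0$ induces $K_4$ in $G$, then $e_G(T_0)=6$ forces $e_{\mathcal{T}}(T_0)\leq 1$, so three of the four triangles inside every $K_4$ of $G$ must be absent from $\mathcal{T}$.

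The main obstacle is this second case, where one must show that the edge gain from inserting $K_4$'s into $G$ is strictly dominated by the mandatory loss in $|\mathcal{T}|$. The subtle point is that overlapping $K_4$'s may share forbidden triples, so a single exclusion can discharge several constraints at once and a naive charging argument loses too much. My plan is a stability/replacement argument: pick an edge lying in some $K_4$, remove it, and track the net effect on $|E(G)|+|\mathcal{T}|$, iterating until the graph becomes $K_4$-free; the assumption $n\geq 25$ would enter here to guarantee enough room for this reduction to preserve the bound at each step and to rule out small sporadic extremal configurations (as seen in the $n=6$ exception of Theorem~\ref{(4,12)}).
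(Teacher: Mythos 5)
Your reduction is sound: Lemma~\ref{1.1} restricts to hereditary families, the identity $|\mathcal{F}_{|T}|=5+e_G(T)+e_{\mathcal{T}}(T)$ is correct for hereditary $\mathcal{F}\subseteq\binom{[n]}{\leq 3}$ containing $\emptyset$ and all singletons, and your lower-bound family is exactly the construction $\mathcal{F}(n,3)$ recorded in Section~\ref{sec 9} (all sets meeting each part of a balanced tripartition in at most one vertex); the count $\prod_{i=0}^{2}\left\lfloor (n+3+i)/3\right\rfloor$ checks out. The $K_4$-free case of the upper bound is also fine: Tur\'an's theorem bounds $|E(G)|$, the Zykov-type bound on the number of triangles in a $K_4$-free graph bounds $|\mathcal{T}|$, and both extrema are attained simultaneously by $T(n,3)$.

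The genuine gap is the case you yourself flag: when $G$ contains a $K_4$ you offer only a plan, and the plan as stated runs in the wrong direction. Deleting an edge $xy$ lying in a $K_4$ removes one edge from $G$ and also forces the removal of every triple of $\mathcal{T}$ containing $\{x,y\}$ (heredity requires members of $\mathcal{T}$ to span triangles of the current graph), so the quantity $|E(G)|+|\mathcal{T}|$ can only decrease under your operation; a decrease tells you nothing about an upper bound on the original family. To reduce to the $K_4$-free case you would need a transformation under which $|E(G)|+|\mathcal{T}|$ does not decrease, which means simultaneously reinserting triangles that were previously excluded by the local constraint $e_G(T)+e_{\mathcal{T}}(T)\leq 7$ on the $K_4$'s through $xy$, and then re-verifying that constraint on all other $4$-sets. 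Quantifying this trade-off --- in particular when many $K_4$'s overlap and share their ``missing'' triangles, or when $G$ is dense and far from $T(n,3)$ (say close to $K_n$ with $\mathcal{T}$ forced to be very sparse) --- is precisely the substance of the Frankl--Wang argument and is where the hypothesis $n\geq 25$ is consumed; the $n=6$ exception in Theorem~\ref{(4,12)} shows such configurations can genuinely win for small $n$. As written, the upper bound is established only for $K_4$-free $G$, so the proof is incomplete.
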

Besides, Frankl and Wang~\cite{FW2024} also investigated the value of $\M(n,4,b)$ for $6\leq b \leq 15$.
Combining with other known Tur\'an type problems on graphs and hypergraphs, the following table summarizes the values on $\M(n,a,b)$ for $a\in \{2,3,4\}$ and $1\leqslant b\leqslant 2^a$.
Here, we let $C_3^{+}$ denote the graph formed by attaching a pendant edge to a triangle, $K_4^{(3)}$ denote the complete $3$-uniform hypergraph on $4$ vertices, and $K_4^{(3)-}$ denote the 3-uniform hypergraph obtained from $K_4^{(3)}$ by deleting an edge.

\begin{table}[H]
\centering
\begin{tabular}{|c|c|c|c|} 
\toprule
\hline
 & a=2 & a=3 & a=4 \\
\hline
$b=1 $   & $1$   & $1$ & $1$ \\ \hline
$b=2 $   & $2$   & $2$ & $2$ \\ \hline
$b=3 $   & $3$   & $3$ & $3$ \\ \hline
$b=4 $   & $n+2$ & $4$ & $4$ \\ \hline
$b=5 $   & --& $n+2$ & $5$ \\ \hline
$b=6 $   & --& $\lfloor \frac{3}{2}n  \rfloor+2$ & $n+2$ \\ \hline
$b=7 $   & --& $\lfloor \frac{n^{2}}{4} \rfloor+n+2$ & $n+3$ \\ \hline
$b=8 $   & --& $\binom{n}{2}+n+2$ & $\lfloor \frac{5}{3}n  \rfloor+2$ \\ \hline
$b=9 $   & --& --& $\mbox{ex}(n,\{C_{3}^+,C_{4}\})+n+2$ \\ \hline
$b=10$    & --& --& $\lfloor \frac{n^{2}}{4} \rfloor+n+2$ \\ \hline
$b=11$    & --& --&  $\lfloor \frac{n^2}{3} \rfloor+n+2$ \\ \hline
$b=12$, $n=6$    & --& --& $24$ \\ \hline
$b=12$, $n\neq 6$   & --& --& $\binom{n}{2}+n+2$  \\  \hline
$b=13$, $n\geqslant 25$   & --& --& $\lfloor \frac{n+5}{3}  \rfloor\lfloor \frac{n+4}{3}  \rfloor\lfloor \frac{n+3}{3}  \rfloor+1$ \\ \hline
$b=14$    & --& --& $\operatorname{ex}_3(n,K^{(3)-}_{4})+\binom{n}{2}+n+2$ \\ \hline
$b=15$    & --& --& $\operatorname{ex}_3(n,K^{(3)}_{4})+\binom{n}{2}+n+2$ \\ \hline
$b=16$    & --& --& $ \binom{n}{3}+\binom{n}{2}+n+2$ \\ \hline

\end{tabular}
\caption{Values of \( m(n,a,b) \) for \( a \in \{2,\,3,\,4 \}\) and \( 1 \leqslant b \leqslant 2^a \).}
\end{table}

Note that all values in this table are precisely established except for $m(n,4,9)$, $m(n,4,14)$ and $m(n,4,15)$.
As for $m(n,4,9)$, the exact value of $\mbox{ex}(n,\{C_{3}^+,C_{4}\})$ remains undetermined. 
The {\it Zarankiewicz number} $z(n, C_4)$, extensively studied in extremal graph theory \cite{DHS2013,FS2013,NV2005,MY2023,HMY2023}, is defined as the maximum number of edges in an $n$-vertex $C_4$-free bipartite graph. It is known that $z(n, C_4)=\left(\frac{n}{2}\right)^{3/2}+o(n^{3/2})$, which implies $\mbox{ex}(n,\{C_{3}^+,C_{4}\}) \geqslant z(n, C_4)=\left(\frac{n}{2}\right)^{3/2}+o(n^{3/2})$. Similarly, the classical result $\mbox{ex}(n,C_{4})=\frac{1}{2} n^{3/2}+O(n)$ (see \cite{KST1954}) yields $\mbox{ex}(n,\{C_{3}^+,C_{4}\})\leqslant \mbox{ex}\left(n, C_4\right)=\frac{1}{2} n^{3/2}+O(n)$. Consequently, we obtain the bounds:\[\left(\frac{n}{2}\right)^{3/2}+o(n^{3/2}) \leqslant m(n,4,9)\leqslant \frac{1}{2} n^{3/2}+O(n).\]

Regarding $m(n, 4,14)$ and $m(n, 4,15)$, their exact values depend on the hypergraph Tur\'an problems
$\operatorname{ex}_3(n,K^{(3)-}_{4})$ and $\operatorname{ex}_3(n,K^{(3)}_{4})$,
which are notorious open problems in extremal graph theory. 
For complete $3$-uniform hypergraph, Tur\'{a}n~\cite{T1941} proposed the following conjecture.
\begin{conjecture}[Tur\'{a}n~\cite{T1941}]\label{conj K^3_k}
    For any integer $k\geqslant 4$, we have
    \[
    \pi(K^{(3)}_{k})=1-\left ( \frac{2}{k-1} \right)^{2}.
    \]
\end{conjecture}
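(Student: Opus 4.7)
The lower bound is the classical iterated blow-up construction, and this is the part I would actually carry out in detail. The plan is to partition $[n]$ into $k-1$ nearly equal parts $V_1,\dots,V_{k-1}$, include in the $3$-graph every triple that meets three distinct parts, and then recurse inside each $V_i$ using exactly the same rule. A direct count shows that the edge density tends to $1-(2/(k-1))^2$ as $n\to\infty$, matching the conjectured value. To see that the construction is $K_k^{(3)}$-free, I would argue by pigeonhole applied level by level: among any $k$ vertices, at the top level two must lie in the same part $V_i$; continuing inside $V_i$, two of them must again share a sub-part, and so on, so eventually some triple among the $k$ chosen vertices sits entirely inside a single part at some recursion level and is therefore missing from the hypergraph.

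For the upper bound the situation is honest: this is Tur\'an's 1941 conjecture and has been open for over eighty years, so any serious ``proposal'' is really a description of the attack that every approach has followed. The plan would be a stability-plus-induction template: apply the hypergraph regularity or removal lemma to reduce to near-extremal $K_k^{(3)}$-free $3$-graphs of density $1-(2/(k-1))^2+\epsilon$, classify such near-extremal hypergraphs as approximate blow-ups of the construction above, and then iterate the classification inside each part to drive $\epsilon$ down to zero. This is the step that I expect to be the main obstacle, and indeed the real obstacle in the problem. Unlike the graph case, where Erd\H{o}s--Simonovits stability pins down the extremal Tur\'an graph essentially uniquely, the $3$-uniform setting admits a large zoo of pairwise non-isomorphic asymptotically extremal constructions (a phenomenon first emphasised by Kostochka already for $k=4$), so there is no single target structure to stabilise against.

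A realistic proposal, therefore, is to present the lower bound in full and to record the best known upper estimates rather than pretend to close the gap: Sidorenko's analytic bound, the Frankl--R\"odl improvement via the (1/4)-triangle removal type argument, and Razborov's flag-algebra computation (which for $k=4$ currently gives roughly $0.561\ldots$ against the conjectured $5/9$). Any genuine proof of the conjecture would require a fundamentally new tool for taming this multiplicity of extremal configurations; absent such a tool, the best I can promise is the matching construction together with a careful statement of where the upper bound obstruction lies.
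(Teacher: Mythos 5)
This statement is Tur\'an's conjecture, which the paper records as an open problem: it gives no proof, only the matching lower bound for $k=4$ via known constructions and the best known upper bounds (e.g.\ Razborov's $0.561666$ against the conjectured $5/9$). You correctly recognize that the upper bound is open and that no proof can be offered, and your survey of the obstruction (the multiplicity of asymptotically extremal configurations, first emphasised by Kostochka) matches the paper's discussion. So on the upper-bound side there is nothing to fault.

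However, the one concrete mathematical claim you do make --- that the iterated blow-up with only \emph{transversal} triples achieves density $1-\bigl(\tfrac{2}{k-1}\bigr)^{2}$ --- is false. With $k-1$ parts, the top level contributes density $\tfrac{(k-2)(k-3)}{(k-1)^{2}}$ and the recursion contributes $\tfrac{d}{(k-1)^{2}}$, so the limiting density solves $d=\tfrac{(k-2)(k-3)}{(k-1)^{2}}+\tfrac{d}{(k-1)^{2}}$, i.e.\ $d=\tfrac{k-3}{k}$. For $k=4$ this is $\tfrac14$, far below the required $\tfrac59$; in general $\tfrac{k-3}{k}<\tfrac{(k-3)(k+1)}{(k-1)^{2}}=1-\bigl(\tfrac{2}{k-1}\bigr)^{2}$ for all $k\geqslant 4$. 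The actual matching constructions are not of this form: Tur\'an's construction for $k=4$ takes three classes $V_0,V_1,V_2$ and includes, besides the transversal triples, all triples with two vertices in $V_i$ and one in $V_{i+1}$ (indices mod $3$); it is these non-transversal edges that lift the density from $\tfrac14$ to $\tfrac59$, and verifying $K_4^{(3)}$-freeness then requires more than the level-by-level pigeonhole you describe. So even the part of your proposal you promise to ``carry out in detail'' does not establish the lower bound $\pi(K_k^{(3)})\geqslant 1-\bigl(\tfrac{2}{k-1}\bigr)^{2}$.
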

In the particular case $k=4$, Conjecture \ref{conj K^3_k} asserts that $\pi(K^{(3)}_{4})=\frac{5}{9}$. 
For its lower bound, there have been several constructions (see, e.g., \cite{B1983,F1988,K1982}) showing $\pi(K^{(3)}_{4})\geq\frac{5}{9}$. 
As for the upper bound, progress has been made in \cite{C1991,CL1999}. 
The best known upper bound $\pi(K_4^{(3)}) \leqslant 0.561666$ was obtained by Razborov~\cite{R2010} using flag algebra.
For $K_4^{(3)-}$, the current best bounds are
$\frac{2}{7} \leqslant \pi(K_4^{(3)-}) \leqslant 0.286889$,
where the lower bound is due to Frankl and F\"{u}redi~\cite{FF1984}, and the upper bound was obtained by Falgas-Ravry and Vaughan~\cite{FV2011}.
Consequently, we have the following bounds:
\[
\frac{2}{7}\times\frac{n^3}{6}+o(n^3)\leqslant m(n,4,14)\leqslant 0.286889\times\frac{n^3}{6}+o(n^3),
\]
\[
\frac{5}{9}\times\frac{n^3}{6}+o(n^3)\leqslant m(n,4,15)\leqslant 0.561666\times\frac{n^3}{6}+o(n^3).
\]

\section{Single-Element Removal}\label{sec 4}

By definition, the arrow relation is inherently transitive: if $(n,m)\rightarrow(a,b)$ and $(a,b)\rightarrow(c,d)$, then $(n,m)\rightarrow(c,d)$. Therefore, it is essential to investigate the special arrow relation $(n,m)\rightarrow(n-1,m^\prime)$. Since single-element removal typically causes bounded trace reduction, the relation should be parameterized by a controlled loss rate $s$ rather than an arbitrary $m^\prime$. This leads to the following refined formulation presented in the book by Frankl and Tokushige \cite{FT2018}.
\begin{problem}[Frankl and Tokushige \cite{FT2018}, Problem 3.8]\label{2.1}
For any non-negative integers $n$ and $s$, determine or estimate the maximum value $m=m(n, s)$ such that $(n, m) \rightarrow(n-1, m-s)$.
\end{problem}
In view of Lemma \ref{1.1}, Problem \ref{2.1} is equivalent to the following problem:
\begin{problem}
    Find the maximum $m=m(n, s)$ such that every hereditary family $\mathcal{F}$ on $n$ vertices with $m$ edges satisfies $\delta(\mathcal{F})\leqslant s$.
\end{problem}

The investigation of Problem~\ref{2.1} initially focused on determining $m(n,s)$ for small values of $s$. Bondy \cite{B1972} and Bollob\'as \cite{L2007} solved the cases of $s=0$ and $s=1$ respectively.
\begin{theorem}[Bondy \cite{B1972}]
Let $n$ be a positive integer. Then
$$m(n,0)=n.$$
\end{theorem}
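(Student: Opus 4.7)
The plan is to invoke the hereditary reduction of Lemma~\ref{1.1} together with the following elementary identity: for any hereditary family $\mathcal{F}\subseteq 2^{[n]}$ and any $v\in[n]$, every set $F\in\mathcal{F}$ with $v\in F$ collapses under restriction to $[n]\setminus\{v\}$ onto $F\setminus\{v\}$, which already lies in $\mathcal{F}$ by heredity, while sets not containing $v$ restrict injectively. Hence
\[
|\mathcal{F}_{\mid [n]\setminus\{v\}}|=|\mathcal{F}|-d_{\mathcal{F}}(v),
\]
so the arrow relation $(n,m)\rightarrow(n-1,m)$ is equivalent to the statement that every hereditary family $\mathcal{F}\subseteq 2^{[n]}$ with $|\mathcal{F}|=m$ satisfies $\delta(\mathcal{F})=0$, exactly the reformulation recorded just before the theorem.

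For the upper bound $m(n,0)\leq n$, I would exhibit the hereditary family
\[
\mathcal{F}_0=\{\emptyset\}\cup\bigl\{\{i\}:i\in[n]\bigr\}
\]
of size $n+1$. Every vertex has degree $1$ in $\mathcal{F}_0$, so by the identity above, every $(n-1)$-element trace has size exactly $n<n+1$. This witnesses $(n,n+1)\not\rightarrow(n-1,n+1)$, giving $m(n,0)\leq n$.

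For the matching lower bound $m(n,0)\geq n$, I would proceed by contradiction via Lemma~\ref{1.1}: suppose some hereditary family $\mathcal{F}\subseteq 2^{[n]}$ with $|\mathcal{F}|=n$ has $d_{\mathcal{F}}(v)\geq 1$ for every $v\in[n]$. Then each $v\in[n]$ lies in some member of $\mathcal{F}$, and heredity forces $\{v\}\in\mathcal{F}$ for every $v$; since $\mathcal{F}$ is nonempty and hereditary we also have $\emptyset\in\mathcal{F}$. These $n+1$ distinct sets all lie in $\mathcal{F}$, contradicting $|\mathcal{F}|=n$. Thus some vertex $v$ has $d_{\mathcal{F}}(v)=0$, yielding $|\mathcal{F}_{\mid [n]\setminus\{v\}}|=n$, so $(n,n)\rightarrow(n-1,n)$.

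There is essentially no obstacle here; the whole argument is bookkeeping once Lemma~\ref{1.1} is in hand. The only point worth isolating is the identity $|\mathcal{F}_{\mid [n]\setminus\{v\}}|=|\mathcal{F}|-d_{\mathcal{F}}(v)$ for hereditary families, which is the template one expects to drive the later, more delicate results on $m(n,s)$ for $s\geq 1$ in this section.
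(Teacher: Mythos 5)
Your proof is correct and follows exactly the route the paper sets up for this section: reduce via Lemma~\ref{1.1} to hereditary families, use the identity $|\mathcal{F}_{\mid [n]\setminus\{v\}}|=|\mathcal{F}|-d_{\mathcal{F}}(v)$ to turn the arrow relation into a minimum-degree statement, and then match the singleton construction against the observation that positive minimum degree forces all $n$ singletons plus $\emptyset$ into a hereditary family. The survey states Bondy's theorem without proof, but your argument is precisely the elementary $d=1$ case of the double-counting proof it gives for Theorem~\ref{thm2^(d-1)-1}, so there is nothing to correct.
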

\begin{theorem}[Bollob\'as \cite{L2007}]
Let $n$ be a positive integer. Then
$$m(n,1)=\left \lceil \frac{3}{2}n \right \rceil.$$
\end{theorem}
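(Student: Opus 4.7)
The plan is to invoke Lemma~\ref{1.1} to recast the problem in terms of hereditary families. For any hereditary $\mathcal{F}\subseteq 2^{[n]}$ and $v\in[n]$, each set $F\ni v$ in $\mathcal{F}$ satisfies $F\setminus\{v\}\in\mathcal{F}$, so the restriction map $F\mapsto F\setminus\{v\}$ identifies exactly $d_{\mathcal{F}}(v)$ pairs, giving
\[
|\mathcal{F}_{|[n]\setminus\{v\}}|=|\mathcal{F}|-d_{\mathcal{F}}(v).
\]
By Lemma~\ref{1.1}, $(n,m)\rightarrow(n-1,m-1)$ therefore holds if and only if every hereditary family on $[n]$ of size $m$ satisfies $\delta(\mathcal{F})\leq 1$. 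Since any hereditary family of size less than $2^{n}$ can be enlarged by adjoining a minimal non-member without decreasing its minimum degree, the set of $m$ for which some hereditary family of size $m$ with $\delta\geq 2$ exists is upward-closed. Consequently $m(n,1)=\mu-1$, where $\mu$ denotes the minimum size of a hereditary family on $[n]$ with $\delta\geq 2$.

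For the upper bound $\mu\leq \lceil 3n/2\rceil+1$, I would exhibit the family
\[
\mathcal{F}^{*}=\{\emptyset\}\cup \binom{[n]}{1}\cup E,
\]
where $E\subseteq\binom{[n]}{2}$ is a graph on $[n]$ with no isolated vertex and exactly $\lceil n/2\rceil$ edges (a perfect matching if $n$ is even, or a near-perfect matching together with one additional edge if $n$ is odd). Then $\mathcal{F}^{*}$ is hereditary, $|\mathcal{F}^{*}|=\lceil 3n/2\rceil+1$, and $d_{\mathcal{F}^{*}}(v)=1+d_{E}(v)\geq 2$ for every $v\in[n]$.

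For the lower bound $\mu\geq \lceil 3n/2\rceil+1$, I would argue structurally that any hereditary $\mathcal{F}$ with $\delta(\mathcal{F})\geq 2$ contains $\emptyset$, all $n$ singletons, and at least $\lceil n/2\rceil$ two-element sets. Indeed, $\delta(\mathcal{F})\geq 2$ forces each $v\in[n]$ into some $F\in\mathcal{F}$, so $\{v\}\in\mathcal{F}$ by heredity. Because $\{v\}$ contributes only $1$ to $d_{\mathcal{F}}(v)$, there must also be some $F\in\mathcal{F}$ with $|F|\geq 2$ and $v\in F$, and heredity then deposits a $2$-element set $\{u,v\}\in \mathcal{F}$. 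Hence $E:=\mathcal{F}\cap\binom{[n]}{2}$ induces a graph on $[n]$ with no isolated vertex, giving $|E|\geq \lceil n/2\rceil$ and therefore $|\mathcal{F}|\geq 1+n+\lceil n/2\rceil=\lceil 3n/2\rceil+1$.

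The proof is essentially a clean application of the hereditary-family reduction, and the main point requiring care is the structural observation that the $2$-element sets of any bad hereditary family span a graph on $[n]$ with minimum degree at least one; after that, the elementary bound $|E|\geq \lceil n/2\rceil$ for such graphs closes the gap.
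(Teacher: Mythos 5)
Your proof is correct. The paper states this theorem of Bollob\'as without proof, but your argument is exactly the route the survey itself sets up: Lemma~\ref{1.1} plus the identity $|\mathcal{F}_{|[n]\setminus\{v\}}|=|\mathcal{F}|-d_{\mathcal{F}}(v)$ for hereditary families reduces the problem to the reformulation stated right after Problem~\ref{2.1} (minimum size of a hereditary family with $\delta\geq 2$), and your matching-based construction together with the edge-cover bound $|E|\geq\lceil n/2\rceil$ closes it. The extra care you take with upward-closedness of the ``bad'' sizes is a nice touch that justifies reading $m(n,1)$ as $\mu-1$.
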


In 1983, Frankl~\cite{F1983} proved the first result for an infinite family of cases, specifically for $s = 2^{d-1} - 1$, where $d$ is any positive integer.

\begin{theorem}[Frankl~\cite{F1983}]\label{thm2^(d-1)-1}
Let $n$ and $d$ be positive integers.
$$m(n,2^{d-1}-1)\geq \left \lceil \frac{2^{d}-1}{d}n \right \rceil.$$

 \end{theorem}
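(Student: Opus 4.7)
The plan is to invoke Lemma~\ref{1.1} to restrict attention to hereditary families, then establish a sharp lower bound on the size of any hereditary family with large minimum degree. Writing $M := \lceil (2^d-1)n/d\rceil$ and $s := 2^{d-1} - 1$, the trace identity $|\mathcal{F}_{\mid [n]\setminus\{v\}}| = |\mathcal{F}| - d_\mathcal{F}(v)$ (valid for any hereditary $\mathcal{F}$ and any $v\in [n]$) reduces the arrow relation $(n, M) \rightarrow (n-1, M-s)$ for a hereditary $\mathcal{F}$ with $|\mathcal{F}| \geq M$ to the single inequality $\delta(\mathcal{F}) \leq |\mathcal{F}| - M + s$.

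The heart of the argument will be the following Key Lemma: every hereditary family $\mathcal{F}$ on $[n]$ with $\delta(\mathcal{F}) \geq 2^{d-1}$ satisfies $|\mathcal{F}| \geq (2^d-1)n/d + 1$, which in integer terms forces $|\mathcal{F}| \geq M+1$. Granting it, the theorem follows swiftly: if $|\mathcal{F}| = M$, the Key Lemma yields $\delta \leq 2^{d-1}-1 = s$, as needed. For $|\mathcal{F}| = M + k$ with $k \geq 1$ and (by way of contradiction) $\delta(\mathcal{F}) \geq 2^{d-1} + k$, I would iteratively delete a maximal set $A \in \mathcal{F}$ containing a current minimum-degree vertex; such $A$ always exists since $\delta \geq 1$ forces $\{v\}$ into $\mathcal{F}$, and $\{v\}$ lies inside some maximal set. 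Each deletion preserves hereditariness and decreases both $|\mathcal{F}|$ and $\delta(\mathcal{F})$ by exactly $1$, so after $k$ steps one reaches a hereditary family $\mathcal{F}'$ with $|\mathcal{F}'| = M$ and $\delta(\mathcal{F}') \geq 2^{d-1}$, contradicting the Key Lemma.

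To prove the Key Lemma I plan to combine the double-counting identity
\[
|\mathcal{F}| - 1 \;=\; \sum_{\emptyset \neq F\in\mathcal{F}}\sum_{v\in F}\frac{1}{|F|} \;=\; \sum_{v\in[n]}\sum_{G\in \mathcal{L}_v}\frac{1}{|G|+1},
\]
where $\mathcal{L}_v := \{F\setminus\{v\} : v \in F \in \mathcal{F}\}$ is the hereditary link at $v$ (satisfying $|\mathcal{L}_v| = d_\mathcal{F}(v)$), together with the following Sub-Lemma: for any hereditary family $\mathcal{L}$ with $|\mathcal{L}| \geq 2^{d-1}$, one has $\sum_{G\in \mathcal{L}} 1/(|G|+1) \geq (2^d - 1)/d$, with equality when $\mathcal{L} = 2^A$ for some $(d-1)$-set $A$. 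Since the hypothesis $\delta(\mathcal{F}) \geq 2^{d-1}$ ensures $|\mathcal{L}_v| \geq 2^{d-1}$ for every $v$, summing the Sub-Lemma over the $n$ vertices yields $|\mathcal{F}| - 1 \geq n(2^d-1)/d$, which is the Key Lemma.

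I plan to prove the Sub-Lemma via the stronger pointwise bound $P(\mathcal{L}, x) \geq (1+x)^{\log_2 |\mathcal{L}|}$ for every $x \in [0,1]$, where $P(\mathcal{L}, x) := \sum_{F \in \mathcal{L}} x^{|F|}$; integrating over $[0,1]$ then gives $\sum_{G} 1/(|G|+1) \geq (2|\mathcal{L}|-1)/(\log_2|\mathcal{L}|+1) \geq (2^d-1)/d$, the last inequality being automatic once $|\mathcal{L}| \geq 2^{d-1}$. The pointwise bound I would establish by induction on the ground set size, using the splitting $P(\mathcal{L}, x) = P(\mathcal{L}^{-v}, x) + x \cdot P(\mathcal{L}_v, x)$ for any fixed $v$, where $\mathcal{L}^{-v} := \{F \in \mathcal{L} : v \notin F\}$. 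A crucial structural ingredient is that the map $F \mapsto F\setminus\{v\}$ injects $\{F \in \mathcal{L} : v \in F\}$ into $\{F \in \mathcal{L} : v \notin F\}$ for hereditary $\mathcal{L}$, which forces $d_v \leq |\mathcal{L}|/2$. Setting $p := d_v/|\mathcal{L}| \in [0, 1/2]$ and $t := \log_2(1+x) \in [0, 1]$, the inductive step reduces to the scalar inequality $(1-p)^t + (2^t-1)\,p^t \geq 1$; this is an equality at $p \in \{0, 1/2\}$ and at $t \in \{0, 1\}$, and the general case follows from the concavity of the left-hand side in $p$ for fixed $t \in (0,1)$. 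The hard part will be the careful verification of this scalar inequality on its restricted domain; the bound $p \leq 1/2$, supplied by the hereditary structure, is essential since the inequality can genuinely fail for $p > 1/2$.
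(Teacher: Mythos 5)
Your proposal is correct, and its skeleton coincides with Frankl's argument as presented in the paper: reduce to hereditary families via Lemma~\ref{1.1}, pass to the links $\mathcal{F}(x)$, and double-count $\sum_{x}\sum_{H\in\mathcal{F}(x)}\frac{1}{|H|+1}$ to bound $|\mathcal{F}|-1$ from below. (Your extra deletion argument for $|\mathcal{F}|=M+k$ is harmless but unnecessary: Lemma~\ref{1.1} already reduces the arrow relation to hereditary families of size exactly $m$.) The genuine divergence is in how the key estimate $\sum_{G\in\mathcal{L}}\frac{1}{|G|+1}\geq\frac{2^d-1}{d}$ for a hereditary $\mathcal{L}$ with $|\mathcal{L}|\geq 2^{d-1}$ is obtained. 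The paper invokes Katona's weighted generalization of the Kruskal--Katona theorem (Theorem~\ref{katona}) with $f(k)=\frac{1}{k+1}$, which gives the exact colexicographic minimum $\sum_{R\in\mathcal{R}(m)}\frac{1}{|R|+1}$. You instead prove a self-contained generating-function inequality $\sum_{F\in\mathcal{L}}x^{|F|}\geq(1+x)^{\log_2|\mathcal{L}|}$ by induction on the ground set, using $d_v\leq|\mathcal{L}|/2$ and the concavity of $p\mapsto(1-p)^t+(2^t-1)p^t$ on $[0,\tfrac12]$ (which I have checked: it equals $1$ at $p=0$ and $p=\tfrac12$ and is concave since $t(t-1)<0$), and then integrate over $x\in[0,1]$ to get $\frac{2m-1}{\log_2 m+1}$, which is increasing in $m$ and matches $\frac{2^d-1}{d}$ at $m=2^{d-1}$. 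Your route is more elementary in that it avoids the Kruskal--Katona machinery entirely, at the cost of a slightly weaker (but here fully sufficient) intermediate bound that is tight only at powers of two; the paper's route is shorter given Theorem~\ref{katona} as a black box and yields the sharp value of the weighted sum for every $m$.
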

The proof of Theorem \ref{thm2^(d-1)-1} relies on the well-known Kruskal-Katona Theorem, named after Kruskal~\cite{K1963} and Katona~\cite{K1968}.
For two finite sets $A,B\subseteq \mathbb{N}^{+}$, we say that $A$ precedes $B$ in the {\it colexicographic order}, which is denoted by $A\prec_{col}B$, if $\max(A\bigtriangleup B)\in B$.
For $m\in \mathbb{N}$, we define $\mathcal{R}(m)$ to be the family containing the first $m$ finite subsets of $\mathbb{N}^{+}$ according to the colexicographic order. For $2^{k}\leqslant m<2^{k+1}$, we have $$\mathcal{R}(m)=2^{[k]}\cup \{R\cup \{k+1\}:R\in \mathcal{R}(m-2^{k})\}. $$
In particular, $\mathcal{R}(0)=\emptyset $ and $\mathcal{R}(2^{k})=2^{[k]}$.
The following theorem due to Katona \cite{K1978} is a generalization of the Kruskal-Katona theorem.
\begin{theorem}[Katona \cite{K1978}] \label{Kruskal-Katona theorem}
Let $f:\mathbb{N}\to \mathbb{R}$ be a monotone non-increasing function and let $\mathcal{F}$ be a hereditary family with $|\mathcal{F}|=m$. Then
\begin{equation}
\sum_{F\in \mathcal{F}}f(|F|)\ge \sum_{R\in \mathcal{R}(m)}f(|R|).\notag
\end{equation}
\label{katona}
\end{theorem}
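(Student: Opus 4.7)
The plan is to reduce the weighted inequality to a level-cardinality comparison via Abel summation, and then establish the level comparison through a cascading application of the classical Kruskal-Katona shadow theorem.

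For the Abel summation step, set $A_s := |\{F \in \mathcal{F}: |F| \leq s\}|$ and $B_s := |\{R \in \mathcal{R}(m): |R| \leq s\}|$ for each $s \geq 0$, and note that $A_s = B_s = m$ for all sufficiently large $s$. A direct telescoping computation (summation by parts) gives
\begin{equation}
\sum_{F \in \mathcal{F}} f(|F|) - \sum_{R \in \mathcal{R}(m)} f(|R|) = \sum_{s \geq 0} (A_s - B_s)\bigl(f(s) - f(s+1)\bigr). \notag
\end{equation}
Since $f$ is non-increasing, each factor $f(s) - f(s+1)$ is non-negative, so the theorem reduces to showing $A_s \geq B_s$ for every $s \geq 0$.

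For the level-cardinality comparison $A_s \geq B_s$ -- equivalently, that $\mathcal{F}$ has no more sets of size $> s$ than $\mathcal{R}(m)$ does -- I would apply the classical Kruskal-Katona shadow theorem level by level. Writing $\mathcal{F}_k := \{F \in \mathcal{F}: |F| = k\}$ with lower shadow $\partial \mathcal{F}_k := \{G: G \subsetneq F \in \mathcal{F}_k,\ |G| = k-1\}$, the hereditary property gives $\mathcal{F}_{k-1} \supseteq \partial \mathcal{F}_k$ for every $k \geq 1$, while Kruskal-Katona provides a sharp lower bound on $|\partial \mathcal{F}_k|$ attained exactly by initial colex segments of size $|\mathcal{F}_k|$ at level $k$. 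The family $\mathcal{R}(m)$ simultaneously realizes equality in every such shadow inequality: each of its levels is either complete or an initial colex segment whose lower shadow coincides with the entire next lower level. Cascading the shadow inequalities from the top level of $\mathcal{F}$ downward forces the level vector $(|\mathcal{F}_k|)_{k \geq 0}$ to dominate $(|\mathcal{R}(m)_k|)_{k \geq 0}$ in the sense of partial sums from the bottom, which is precisely $A_s \geq B_s$.

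The main technical obstacle is formalizing the cascade rigorously. My plan is induction on $m$, making use of the recursion $\mathcal{R}(m) = 2^{[K]} \cup \{R \cup \{K+1\}: R \in \mathcal{R}(m - 2^K)\}$ for $2^K \leq m < 2^{K+1}$. One splits $\mathcal{F}$ at an element $v$ of degree $d := d_\mathcal{F}(v)$, obtaining the two smaller hereditary families $\{F \setminus \{v\}: v \in F \in \mathcal{F}\}$ (of size $d$) and $\{F \in \mathcal{F}: v \notin F\}$ (of size $m-d$); applying the inductive hypothesis to these pieces with the weights $f(\cdot +1)$ and $f(\cdot)$ respectively and summing yields
\[
\sum_{F \in \mathcal{F}} f(|F|) \geq \sum_{R \in \mathcal{R}(m-d)} f(|R|) + \sum_{R \in \mathcal{R}(d)} f(|R|+1).
\]
The right-hand side equals $\sum_{R \in \mathcal{R}(m)} f(|R|)$ precisely when $d = m - 2^K$, matching the canonical split of $\mathcal{R}(m)$. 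The delicate part is thus either verifying that a suitable splitting vertex with $d_\mathcal{F}(v) = m - 2^K$ always exists -- one expects this after compressing $\mathcal{F}$ further via the squashing operation of Section~\ref{sec 2} into a shifted form -- or, alternatively, establishing a monotonicity claim that bounds the right-hand side above by $\sum_{R \in \mathcal{R}(m)} f(|R|)$ for every admissible $d$.
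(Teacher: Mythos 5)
The paper itself does not prove this theorem; it explicitly defers to Chapter 8.2 of the Gerbner--Patk\'os book, so there is no in-paper argument to compare against. Judged on its own, your architecture is the standard one and the first half is sound: the summation-by-parts reduction to the level-count comparison $A_s\geq B_s$ is valid (each weight $f(s)-f(s+1)$ is non-negative and $A_s=B_s$ for large $s$), and the decomposition of $\mathcal{F}$ at a vertex $v$ into the link $\{F\setminus\{v\}: v\in F\in\mathcal{F}\}$ (hereditary, size $d$) and $\{F\in\mathcal{F}: v\notin F\}$ (hereditary, size $m-d$, containing the link, whence $d\leq m/2$) is the right inductive engine.

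The proof is nevertheless incomplete, because neither of your two proposed ways of closing the induction is established, and the first one is false. A splitting vertex with $d_{\mathcal{F}}(v)=m-2^{K}$ need not exist even after compression: the hereditary, already shifted family $\{\emptyset\}\cup\{\{i\}: i\in[5]\}$ has $m=6$ and $m-2^{K}=2$, yet every vertex has degree at most $1$, and both squashing and left-shifting fix this family. You are therefore forced onto the second route, the inequality $\sum_{R\in\mathcal{R}(m-d)}f(|R|)+\sum_{R\in\mathcal{R}(d)}f(|R|+1)\geq\sum_{R\in\mathcal{R}(m)}f(|R|)$ for all $0\leq d\leq m/2$ --- and this is precisely where the combinatorial content of the theorem lives. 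Indeed, for $d\leq m-d$ the family $\mathcal{R}(m-d)\cup\{R\cup\{v\}:R\in\mathcal{R}(d)\}$ (with $v$ a fresh element) is itself hereditary of size $m$, so the inequality you need is an instance of the very theorem being proved; it must be derived independently, e.g.\ by a separate induction on $m$ exploiting the colex recursion and the binary expansion of $m$, and you do not carry this out. The ``cascading Kruskal--Katona'' paragraph does not substitute for it either: the shadow theorem bounds $|\partial\mathcal{F}_k|$ in terms of $|\mathcal{F}_k|$ one level at a time, but when $\mathcal{F}$ has sets on many levels the shadows of distinct levels overlap inside a common lower level, and you give no mechanism converting the inequalities $|\mathcal{F}_{k-1}|\geq|\partial\mathcal{F}_k|$ into the cumulative domination $A_s\geq B_s$. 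In short: correct skeleton, but the key lemma is asserted rather than proved.
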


We refer readers to Chapter 8.2 of \cite{GP2018} for a proof of Theorem \ref{Kruskal-Katona theorem}. We now present the proof of Theorem \ref{thm2^(d-1)-1}, originally established by Frankl~\cite{F1983}.

\begin{proof}[Proof of Theorem \ref{thm2^(d-1)-1}]
By Lemma \ref{1.1} and the definition of arrow relation, it suffices to show that a hereditary family $\mathcal{F}\subseteq 2^{[n]}$ with $\delta(\mathcal{F})\geqslant 2^{d-1}$ must satisfy $|\mathcal{F}|>\left \lceil \frac{2^{d}-1}{d}n \right \rceil$. 
For any $x\in [n]$, let $\mathcal{F}(x)=\{F\setminus \{x\}:x\in F\in \mathcal{F}\}$ denote the link of $x$. 
Then $|\mathcal{F}(x)|\geq2^{d-1}$ for any $x\in [n]$. 
It is easy to see that the fact that $\mathcal{F}$ is hereditary implies that $\mathcal{F}(x)$ is also hereditary. 
Applying Theorem~\ref{katona} for $\mathcal{F}(x)$ and $f(k)=\frac{1}{k+1}$, we can obtain that for any $x\in [n]$,
\[
\sum_{H\in \mathcal{F}(x)}\frac{1}{|H|+1}\geqslant \sum_{R\in \mathcal{R}(|\mathcal{F}(x)|)}\frac{1}{|R|+1}\geqslant \sum_{R\in 2^{[d-1]}}\frac{1}{|R|+1}=\sum_{i=0}^{d-1}\frac{\binom{d-1}{i}}{i+1}=\frac{2^{d}-1}{d}.
\]
Thus by the method of double-counting, we obtain
\[
|\mathcal{F}\setminus \{\emptyset \}|=\sum_{\emptyset \ne F\in \mathcal{F}}\sum_{x\in F}\frac{1}{|F|}=\sum_{x\in [n]}\sum_{x\in F\in \mathcal{F}}\frac{1}{|F|}=\sum_{x\in [n]}\sum_{H\in \mathcal{F}(x)}\frac{1}{|H|+1}\geqslant \frac{2^{d}-1}{d}n.
\]
This implies that $|\mathcal{F}|\geqslant \frac{2^{d}-1}{d}n+1 >\left \lceil \frac{2^{d}-1}{d}n \right \rceil$, as desired.
\end{proof}

The upper bound of $m(n,2^{d-1}-1)$ when $d\mid n$ is given by the following general construction:
\begin{construction}\label{Construction}
Let $d$ and $c$ be positive integers with $1\leqslant c\leqslant d-1$. Assume that $n=dk$ for some positive integer $k$.
Let $U_{1},...,U_{k}$ form a partition of $[n]$ into sets of size $d$.
For every $i\in [k]$, arbitrarily pick a family $\mathcal{G}_{i}\subseteq 2^{U_{i}}\setminus \{\emptyset\}$ with $|\mathcal{G}_{i}|=c-1$.
Define
\[
\mathcal{F}(n,d,c) := \left\{ F\subseteq [n] : F\in 2^{U_{i}}\setminus \mathcal{G}_{i} \mbox{ for some $i\in [k]$}\right\}.
\]

For $\mathcal{F}=\mathcal{F}(n,d,c)$ in this construction, one can verify that $|\mathcal{F}|=\frac{2^{d}-c}{d}n+1$ and for any $T\in \binom{[n]}{n-1}$, we have $|\mathcal{F}_{\mid T}|<|\mathcal{F}|-2^{d-1}+c$. This implies that $m(n,2^{d-1}-c)\leqslant \frac{2^{d}-c}{d}n$ whenever $d\mid n$. 
In particular, by letting $c=1$, we now can obtain
\[
m(n,2^{d-1}-1)= \frac{2^{d}-1}{d}n \mbox{ for } d\mid n.
\]
\end{construction}

Since tight constructions matching the lower bound exist only when $d\mid n$, determining $m(n,s)$ for arbitrary values of $n$ becomes less tractable. This led researchers to shift focus toward the asymptotic behavior of $m(n,s)$ instead. In 1994, Watanabe and Frankl \cite{FW1994} established that for any $s\geqslant 0$, the limit $\lim_{n\to \infty}\frac{m(n,s)}{n}$ exists, and they introduced the notation $m(s)$ for this limit.

\begin{theorem}[Watanabe and Frankl \cite{FW1994}]\label{thmm(s)}
For any $s\geqslant 0$, the limit $\lim_{n\to \infty}\frac{m(n,s)}{n}$ exists.
\end{theorem}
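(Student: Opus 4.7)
The plan is to reformulate $m(n,s)$ as an optimization problem over hereditary families and then derive the existence of the limit from an almost-subadditive estimate via Fekete's lemma. First, using Lemma~\ref{1.1}, I would reduce the arrow relation $(n,m)\to(n-1,m-s)$ to hereditary families. For any hereditary family $\mathcal{F}\subseteq 2^{[n]}$ and vertex $v\in[n]$, heredity forces $\mathcal{F}|_{[n]\setminus\{v\}}=\mathcal{F}\cap 2^{[n]\setminus\{v\}}$, which has exactly $|\mathcal{F}|-d_{\mathcal{F}}(v)$ elements, so the largest $(n-1)$-trace equals $|\mathcal{F}|-\delta(\mathcal{F})$. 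Tracking when the arrow relation fails yields the identity $m(n,s)=s+\alpha(n)$, where
\[
\alpha(n) := \min\bigl\{|\mathcal{F}|-\delta(\mathcal{F}) \,:\, \mathcal{F}\subseteq 2^{[n]} \text{ hereditary},\ \delta(\mathcal{F})\geq s+1\bigr\}.
\]
It therefore suffices to prove that $\alpha(n)/n$ converges.

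Next, I would establish a key technical reduction: the minimum defining $\alpha(n)$ can be attained by a family with $\delta(\mathcal{F})=s+1$. Given any hereditary $\mathcal{F}$ with $\delta(\mathcal{F})>s+1$, pick a vertex $v$ attaining the minimum degree and any maximal element $F\in\mathcal{F}$ containing $v$. Removing $F$ preserves heredity (by maximality, no element of $\mathcal{F}\setminus\{F\}$ contains $F$ as a proper subset), decreases $|\mathcal{F}|$ by one, and decreases $\delta(\mathcal{F})$ by exactly one (only degrees of vertices in $F$ drop, each by one, and $v$ realizes the new minimum $\delta-1$). Iterating until $\delta=s+1$ keeps $|\mathcal{F}|-\delta(\mathcal{F})$ invariant.

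With this reduction in hand, almost-subadditivity follows at once. Take minimizers $\mathcal{F}_i\subseteq 2^{[n_i]}$ for $i=1,2$, placed on disjoint ground sets, with $\delta(\mathcal{F}_i)=s+1$. Their union $\mathcal{F}:=\mathcal{F}_1\cup\mathcal{F}_2$ is a hereditary family on $[n_1+n_2]$ of size $|\mathcal{F}_1|+|\mathcal{F}_2|-1$ (only $\emptyset$ is shared) with $\delta(\mathcal{F})=s+1$, giving
\[
\alpha(n_1+n_2) \leq \alpha(n_1) + \alpha(n_2) + s.
\]
Setting $g(n):=\alpha(n)+s$ makes $g$ genuinely subadditive, and Fekete's lemma then yields $g(n)/n\to \inf_n g(n)/n$. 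Since $m(n,s)/n=g(n)/n$, the desired limit exists.

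The main obstacle is the reduction to $\delta(\mathcal{F})=s+1$: the greedy removal must be executed so that $\delta(\mathcal{F})$ decreases by exactly one at each step rather than collapsing further, and heredity must be preserved. Both points are elementary but warrant careful verification, since a naive choice of removed set could break heredity or drop the minimum degree by more than one, which would either invalidate the iteration or spoil the bookkeeping on $|\mathcal{F}|-\delta(\mathcal{F})$.
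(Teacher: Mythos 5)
Your argument is correct and is essentially the standard route to this result: reduce to hereditary families via Lemma~\ref{1.1}, identify the largest $(n-1)$-trace of a hereditary family with $|\mathcal{F}|-\delta(\mathcal{F})$, take disjoint unions of extremal families, and apply Fekete's lemma. One remark: the detour through $\alpha(n)$ and the reduction to $\delta(\mathcal{F})=s+1$ can be skipped entirely, since $m(n,s)+1$ equals the minimum size of a hereditary family on $[n]$ with $\delta(\mathcal{F})\geqslant s+1$ (for $n$ large enough that such families exist, which suffices for the limit), and the disjoint-union construction, whose two parts share only $\emptyset$, already yields $m(n_1+n_2,s)+1\leqslant (m(n_1,s)+1)+(m(n_2,s)+1)-1$, so that $m(n,s)$ itself is subadditive and Fekete applies directly.
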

\begin{definition}[Watanabe and Frankl \cite{FW1994}]
    Let $m(s)=\lim_{n\to \infty}\frac{m(n,s)}{n}$ denote the limit.
\end{definition}
Under this notation, 
the above results of Bondy \cite{B1972}, Bollob\'as \cite{L2007}, and Frankl~\cite{F1983} can be restated as $m(0)=1$, $m(1)=\frac{3}{2}$, and $m(2^{d-1}-1)=\frac{2^{d}-1}{d}$ for $d\geq 1$, respectively.
Watanabe and Frankl \cite{FW1994} further determined the values of $m(2^{d-1}-2)$ and $m(2^{d-1})$ for $d\geq 1$ as follows.
\begin{theorem}[Watanabe and Frankl \cite{FW1994}]\label{thm2^(d-1)-2}
Let $n$ and $d$ be positive integers. Then
$$m(2^{d-1}-2)=\frac{2^{d}-2}{d}
\ \ \textit{ and } \ \ m(2^{d-1})=\frac{2^{d}-1}{d}+\frac{1}{2}.$$
\end{theorem}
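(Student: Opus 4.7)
The plan is to establish both equalities by pairing a lower bound via Katona's generalized Kruskal--Katona theorem (Theorem~\ref{Kruskal-Katona theorem}) with a matching upper bound via an explicit hereditary construction, following the template of the proof of Theorem~\ref{thm2^(d-1)-1}. By Lemma~\ref{1.1}, it suffices to work with hereditary families throughout, and the limits at the end will exist by Theorem~\ref{thmm(s)}.

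For the lower bounds, let $\mathcal{F}\subseteq 2^{[n]}$ be hereditary. I apply Theorem~\ref{Kruskal-Katona theorem} to each link $\mathcal{F}(x)$ with $f(k)=1/(k+1)$, exactly as in the proof of Theorem~\ref{thm2^(d-1)-1}. The key computations are the initial colex segments
\[
\mathcal{R}(2^{d-1}-1)=2^{[d-1]}\setminus\{[d-1]\}\quad\text{and}\quad\mathcal{R}(2^{d-1}+1)=2^{[d-1]}\cup\{\{d\}\},
\]
identified from the fact that $[d-1]$ is the last element of $2^{[d-1]}$ in colex order and $\{d\}$ is the first set immediately after it. Summing $1/(|R|+1)$ over these segments yields $\frac{2^d-1}{d}-\frac{1}{d}=\frac{2^d-2}{d}$ and $\frac{2^d-1}{d}+\frac{1}{2}$ respectively. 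Double counting, as in the proof of Theorem~\ref{thm2^(d-1)-1}, then gives $|\mathcal{F}|\geq\frac{2^d-2}{d}n+1$ whenever $\delta(\mathcal{F})\geq 2^{d-1}-1$ and $|\mathcal{F}|\geq\bigl(\frac{2^d-1}{d}+\frac{1}{2}\bigr)n+1$ whenever $\delta(\mathcal{F})\geq 2^{d-1}+1$.

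For the upper bound of the first equality (with $d\geq 2$), Construction~\ref{Construction} with $c=2$ and $d\mid n$ already works: each $\mathcal{G}_i$ is a single nonempty subset of $U_i$, and the resulting hereditary family $\mathcal{F}(n,d,2)$ has size $\frac{2^d-2}{d}n+1$ and minimum degree $2^{d-1}-1$, giving $m(n,2^{d-1}-2)\leq\frac{2^d-2}{d}n$ for $d\mid n$.

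The main obstacle is the upper bound of the second equality, since Construction~\ref{Construction} only covers $c\in[1,d-1]$ and hence does not directly handle $s=2^{d-1}$. To remedy this I will augment the base family $\mathcal{F}(n,d,1)=\{\emptyset\}\cup\bigcup_{i=1}^{k}\bigl(2^{U_i}\setminus\{\emptyset\}\bigr)$, which has size $\frac{2^d-1}{d}n+1$ and uniform degree $2^{d-1}$, by adding inter-block edges that raise every degree by exactly one. Concretely, assuming $2d\mid n$, pair the blocks into $(U_1,U_2),(U_3,U_4),\dots,(U_{k-1},U_k)$, fix a bijection $\phi_j\colon U_{2j-1}\to U_{2j}$ for each pair, and adjoin all two-element sets $\{u,\phi_j(u)\}$. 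Since the singletons and $\emptyset$ already lie in $\mathcal{F}(n,d,1)$, the augmented family stays hereditary; since $\phi_j$ crosses between distinct blocks, each adjoined set is genuinely new; and since $\phi_j$ is a bijection, each vertex lies in exactly one new edge, so the minimum degree becomes $2^{d-1}+1$. The number of adjoined sets is $d\cdot(k/2)=n/2$, producing a hereditary family of size $\bigl(\frac{2^d-1}{d}+\frac{1}{2}\bigr)n+1$ with $\delta\geq 2^{d-1}+1$ and yielding the matching upper bound. Passing to the limit through $n\to\infty$ along multiples of $2d$, and invoking Theorem~\ref{thmm(s)} to remove the divisibility restriction, completes both equalities.
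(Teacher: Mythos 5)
Your proposal is correct and follows essentially the same route the paper indicates: the lower bounds come from Katona's theorem applied to the links with $f(k)=1/(k+1)$ (your identifications of the colex segments $\mathcal{R}(2^{d-1}-1)$ and $\mathcal{R}(2^{d-1}+1)$ and the resulting sums $\frac{2^d-2}{d}$ and $\frac{2^d-1}{d}+\frac12$ are right), and the upper bounds come from $\mathcal{F}(n,d,2)$ and from the disjoint-blocks family augmented by a cross-block perfect matching, which is exactly the paper's construction $\{\{j,j+dk\}:j\in[dk]\}\cup\bigcup_{i}2^{[id+1:id+d]}$ up to relabeling. The only cosmetic point is that $\mathcal{F}(n,d,2)$ is hereditary only for the choice $\mathcal{G}_i=\{U_i\}$ (though the degree count works for any nonempty $\mathcal{G}_i$), and the first equality should indeed be read with $d\geq 2$.
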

The lower bounds in both cases are established using arguments similar to those in Theorem \ref{thm2^(d-1)-1}. For the upper bounds, $m(2^{d-1}-2)$ is derived from $\mathcal{F}(n,d,2)$ in Construction \ref{Construction}, while $m(2^{d-1})$ is obtained from the following construction:
\begin{construction}
Let $d$ and $k$ be positive integers and let $n=2dk$. Define
\[
\mathcal{F}=\{ \{j,j+dk\}:j\in [dk] \}\cup \bigcup_{i=0}^{2k-1}2^{[id+1:id+d]}.
\]
One can verify that $|\mathcal{F}|=\left(\frac{2^{d}-1}{d}+\frac{1}{2}\right)n+1$ and for any $T\in\binom{[n]}{n-1}$, we have $|\mathcal{F}_{\mid T}|<|\mathcal{F}|-2^{d-1}$. This demonstrates that $m(2^{d-1})\leq\frac{2^{d}-1}{d}+\frac{1}{2}$ for $d\geq 1$.
\end{construction}
Beyond these precise formulas for $m(s)$ with $s\in \{2^{d-1}, 2^{d-1}-1, 2^{d-1}-2\}$, researchers have also determined the exact values for several specific cases. In 1991, Watanabe \cite{W1991} established that $m(5)=\frac{13}{4}$. Further advancing this line of inquiry, Watanabe and Frankl \cite{FW1994} proved $m(9)=\frac{65}{14}$, while Watanabe \cite{W1995} later determined that $m(10)=5$ and $m(13)=\frac{29}{5}$.

A recent result of Piga and Sch\"{u}lke \cite{PS2021} determined $m(12)$ and $m(2^{d-1}-c)$ for a range of $c$.
\begin{theorem}[Piga and Sch\"{u}lke \cite{PS2021}]\label{thmpiga2}
$$m(12)=\frac{28}{5}.$$
\end{theorem}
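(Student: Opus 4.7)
The plan is to prove $m(12)=\frac{28}{5}$ by matching upper and lower bounds, with the real work lying in the lower bound.

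For the upper bound $m(12) \leq \frac{28}{5}$, I apply Construction \ref{Construction} with $d = 5$ and $c = 4$. When $5 \mid n$, the resulting hereditary family $\mathcal{F}(n,5,4)$ has size $\frac{28}{5}n + 1$ and, by the discussion following Construction \ref{Construction}, satisfies $|\mathcal{F}_{\mid T}| < |\mathcal{F}| - 12$ for every $T \in \binom{[n]}{n-1}$. Hence $(n, |\mathcal{F}|) \not\to (n-1, |\mathcal{F}| - 12)$, which forces $m(n,12) \leq \frac{28}{5}n$ and consequently $m(12) \leq \frac{28}{5}$.

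For the lower bound $m(12) \geq \frac{28}{5}$, Lemma \ref{1.1} reduces matters to verifying that every hereditary $\mathcal{F} \subseteq 2^{[n]}$ with $\delta(\mathcal{F}) \geq 13$ satisfies $|\mathcal{F}| \geq \frac{28}{5} n - O(1)$. Mimicking the proof of Theorem \ref{thm2^(d-1)-1}, I would first apply Theorem \ref{katona} to each link $\mathcal{F}(v)$ with the weight $f(k) = 1/(k+1)$. Since $|\mathcal{F}(v)| \geq 13$, a direct computation on the initial colex segment $\mathcal{R}(13)$ gives
\[
\sum_{H \in \mathcal{F}(v)} \frac{1}{|H|+1} \geq \sum_{R \in \mathcal{R}(13)} \frac{1}{|R|+1} = \frac{11}{2},
\]
and double counting then yields only $|\mathcal{F}| - 1 \geq \frac{11}{2} n$. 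This undershoots the target density $\frac{28}{5}$ by exactly $\frac{1}{10}$ per vertex, so the direct Kruskal--Katona route is insufficient.

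To close the gap, I would call a vertex $v$ \emph{tight} if $|\mathcal{F}(v)| = 13$ and $\mathcal{F}(v)$ equals $\mathcal{R}(13)$ after a suitable relabeling of its ground set, and \emph{slack} otherwise. Every slack vertex contributes at least $\sum_{R \in \mathcal{R}(14)} \frac{1}{|R|+1} = \frac{23}{4}$ to the double count---either because $|\mathcal{F}(v)| \geq 14$, or because strict inequality holds in Theorem \ref{katona}---gaining $\frac{1}{4}$ over the baseline. If the slack vertices form a fraction of at least $\frac{2}{5}$ of $[n]$, the refined bound already reaches $\frac{11}{2} + \frac{2}{5} \cdot \frac{1}{4} = \frac{28}{5}$ and the lower bound follows.

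The main obstacle is the tight-dominated regime, where almost all vertices are tight. For each such $v$, the link $\mathcal{F}(v) \cong \mathcal{R}(13)$ is supported on a $4$-element ``neighborhood'' $N(v) \subseteq [n]$ with a rigid colex structure. I would analyze how these local patterns interlock under the hereditary property of $\mathcal{F}$, and show that any nearly all-tight configuration either (i) forces the appearance of sets of size $\geq 4$ in $\mathcal{F}$ beyond those predicted by $\mathcal{R}(13)$, creating slack at some vertex after all, or (ii) produces additional sets not counted by the baseline double count. Making this structural case analysis precise---perhaps via an LP-type refinement of Theorem \ref{katona} that uses the full link profile rather than only $|\mathcal{F}(v)|$---is the technical heart of the theorem and the step I expect to be most delicate.
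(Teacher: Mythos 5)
Your upper bound is correct and is exactly the paper's: Construction~\ref{Construction} with $d=5$, $c=4$ gives $m(n,12)\leqslant\frac{28}{5}n$ for $5\mid n$, hence $m(12)\leqslant\frac{28}{5}$. The lower bound, however, has a genuine gap in two places. First, your quantitative claim about slack vertices is false: Theorem~\ref{katona} gives no bonus for strict inequality, and a hereditary link of size $13$ that is not isomorphic to $\mathcal{R}(13)$ need not have weight $\sum_{R\in\mathcal{R}(14)}\frac{1}{|R|+1}=\frac{23}{4}$. For example, the hereditary family $2^{[3]}\cup\{\{4\},\{1,4\},\{2,4\},\{1,2,4\},\{5\}\}$ has $13$ sets, is not isomorphic to $\mathcal{R}(13)$ (its support has five elements), and has weight $\frac{15}{4}+\frac{23}{12}=\frac{17}{3}$, which is strictly less than $\frac{23}{4}$. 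A short profile analysis shows $\frac{17}{3}$ is the true minimum over non-colex hereditary $13$-families, so the per-vertex gain is $\frac{1}{6}$, not $\frac{1}{4}$, and the slack fraction you would need is $\frac{3}{5}$, not $\frac{2}{5}$. (As it happens $\frac{17}{3}>\frac{28}{5}$, so a corrected dichotomy is still conceivable, but your stated numbers do not support the conclusion you draw.)

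Second, and more seriously, the entire tight-dominated regime --- which is where the theorem actually lives --- is not proved; you explicitly defer it ("I would analyze how these local patterns interlock\dots"). Since a single tight vertex contributes only $\frac{11}{2}<\frac{28}{5}$, and nothing you have written rules out a family in which more than $40\%$ (indeed, more than $60\%$) of vertices are tight, no lower bound better than $\frac{11}{2}$ actually follows from your argument. The known proof (Piga and Sch\"ulke, and in the same spirit Li--Ma--Rong and Reiher--Sch\"ulke for nearby values of $s$) handles precisely this regime by a global structural analysis: the $4$-element supports of the tight links are grouped into ``piles''/``conglomerates'', one shows these supports must be almost pairwise disjoint and must each carry substantially more weight than the local count suggests, and a perturbed weight function replaces the plain $f(k)=\frac{1}{k+1}$. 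Until you supply that structural step, the lower bound $m(12)\geqslant\frac{28}{5}$ is unproved.
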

\begin{theorem}[Piga and Sch\"{u}lke \cite{PS2021}]\label{thmpiga1}
Let $1\leq c\leq \frac{d}{4}$. Then,
\[
m(2^{d-1}-c)= \frac{2^{d}-c}{d}.
\]
\end{theorem}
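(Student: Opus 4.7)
The upper bound $m(2^{d-1}-c) \leqslant (2^d-c)/d$ is immediate from Construction~\ref{Construction}: for $d\mid n$, the family $\mathcal{F}(n,d,c)$ has size $\frac{2^d-c}{d}n+1$ and satisfies $|\mathcal{F}_{\mid T}| < |\mathcal{F}| - 2^{d-1} + c$ for every $(n-1)$-subset $T$, giving $m(n,2^{d-1}-c) \leqslant \frac{2^d-c}{d}n$; dividing by $n$ and taking $n\to\infty$ yields $m(2^{d-1}-c)\leqslant (2^d-c)/d$.

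For the matching lower bound, by Lemma~\ref{1.1} it suffices to prove that every hereditary $\mathcal{F}\subseteq 2^{[n]}$ with $\delta(\mathcal{F})\geqslant 2^{d-1}-c+1$ satisfies $|\mathcal{F}|\geqslant \frac{2^d-c}{d}n+O_{d,c}(1)$. Following Frankl's proof of Theorem~\ref{thm2^(d-1)-1}, the natural starting point is the double-counting identity
\begin{equation*}
|\mathcal{F}\setminus\{\emptyset\}|=\sum_{v\in[n]}\sum_{H\in\mathcal{F}(v)}\frac{1}{|H|+1},
\end{equation*}
combined with Theorem~\ref{Kruskal-Katona theorem} applied to each hereditary link $\mathcal{F}(v)$ with the weight $f(k)=1/(k+1)$, producing $\sum_{H\in\mathcal{F}(v)}\frac{1}{|H|+1}\geqslant g(|\mathcal{F}(v)|)\geqslant g(2^{d-1}-c+1)$ for $g(m):=\sum_{R\in\mathcal{R}(m)}1/(|R|+1)$. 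A short computation shows, however, that $g(2^{d-1}-c+1)<(2^d-c)/d$ as soon as $c\geqslant 3$: the last $c-1$ elements of $2^{[d-1]}$ in colex order include $[d-1]$ of size $d-1$ and $\{2,\dots,d-1\}$ of size $d-2$, so their removal costs $\tfrac{1}{d}+\tfrac{1}{d-1}+\cdots$ rather than the target $c/d$. The vertex-wise Katona bound therefore falls short by $\Theta(n/d^2)$ and must be augmented.

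The plan to close this gap, which is the main technical obstacle, is to sharpen the vertex-wise inequality via a stability analysis of Katona's theorem. If a positive fraction of vertices $v$ satisfy $|\mathcal{F}(v)| > 2^{d-1}-c+1$, then the monotonicity of $g$ already produces a surplus of order $1/d$ per such vertex, which more than absorbs the $\Theta(1/d^2)$ deficit. In the complementary regime one has $|\mathcal{F}(v)|=2^{d-1}-c+1$ for almost every $v$ and each $\mathcal{F}(v)$ is close to an initial colex segment; a stability version of Katona's inequality should then force each link to equal $2^{U_v}\setminus\mathcal{G}_v$, up to lower-order error, for some $(d-1)$-set $U_v\subseteq[n]\setminus\{v\}$ and an upward-closed family $\mathcal{G}_v$ of size at most $c-1$. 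Gluing the putative blocks $\{v\}\cup U_v$ via a matching/cover argument should then produce an almost-partition $[n]=U_1\sqcup\cdots\sqcup U_k$ into $d$-sets with $\mathcal{F}\cap 2^{U_i}\supseteq 2^{U_i}\setminus\mathcal{G}_i$, after which the direct inclusion--exclusion count
\[
|\mathcal{F}|\geqslant \sum_{i=1}^{k}(2^d-|\mathcal{G}_i|) - (k-1) \geqslant k(2^d-c)+1 = \frac{2^d-c}{d}n+1
\]
concludes the argument. The hypothesis $c\leqslant d/4$ enters precisely in the block-extraction step: it forces $|\mathcal{G}_v|\leqslant c-1$ to be small relative to the block size $d$, ensuring that each near-extremal link uniquely determines its block $U_v$ and preventing the overlaps that would otherwise corrupt the partition and the counting.
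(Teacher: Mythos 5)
Your upper bound is correct and is exactly the paper's: Construction~\ref{Construction} gives $m(n,2^{d-1}-c)\leqslant \frac{2^{d}-c}{d}n$ for $d\mid n$, and since the limit $m(s)=\lim_{n\to\infty} m(n,s)/n$ exists (Theorem~\ref{thmm(s)}), passing to the subsequence $n=dk$ yields $m(2^{d-1}-c)\leqslant (2^{d}-c)/d$. Your diagnosis of why the naive vertex-wise Katona bound fails for $c\geqslant 3$ is also accurate: the $c-1$ colex-final sets removed from $2^{[d-1]}$ have sizes $d-1,d-2,d-2,\dots$, so their total weight exceeds $(c-1)/d$ and leaves a per-vertex deficit of order $c/d^{2}$.

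However, the lower bound --- which is the entire content of the theorem --- is not proved. Everything after ``the plan to close this gap'' is a program, not an argument. No stability version of Katona's inequality is stated, let alone proved, and such a statement is genuinely delicate: a hereditary family of size $2^{d-1}-c+1$ whose weight is within $O(c/d^{2})$ of the Katona minimum need not be supported on $d-1$ ground elements, and one must rule out links spread over more elements or assembled from several smaller cubes. The dichotomy you propose also does not close by itself: a surplus of order $1/d$ at ``a positive fraction'' of vertices does not automatically absorb a deficit of order $c/d^{2}$ at the remaining vertices unless the fraction and the transfer mechanism are quantified; this redistribution of weight between vertices is precisely where the published proofs do their work (Piga and Sch\"ulke, and later Li--Ma--Rong via ``piles'' and Reiher--Sch\"ulke via ``conglomerates'', each of which must separately prove that two blocks intersect in at most one vertex before any gluing or counting is possible). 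Finally, your closing inclusion--exclusion presupposes that the blocks $U_{v}$ form an exact partition of $[n]$ and that the portion of $\mathcal{F}$ outside the blocks causes no loss, neither of which has been established. In short, the direction of the sketch is consistent with how the actual proofs proceed, but the key lemmas --- stability for Katona, near-disjointness of blocks, and the global weight transfer --- are all missing, so the inequality $m(2^{d-1}-c)\geqslant (2^{d}-c)/d$ remains unproven.
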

Piga and Sch\"{u}lke \cite{PS2021} also provided the following construction.
\begin{construction}
Let $d\geq 3$ and $k$ be positive integers and let $n=2dk$. Define
\[
\mathcal{F}=\{[jd-d+1:jd-1]:j\in[2k]\}\cup\{ \{2dt-d,2dt\}:t\in[k]\}\cup \bigcup_{i=0}^{2k-1}\binom{[id+1:(i+1)d]}{\leq d-2}.
\]
One can verify that $|\mathcal{F}|=\frac{2^{d}-d-\frac{1}{2}}{d} n+1$ and for any $T\in\binom{[n]}{n-1}$, we have $|\mathcal{F}_{\mid T}|<|\mathcal{F}|-2^{d-1}+d$. This demonstrates that $m(2^{d-1}-d)\leqslant \frac{2^{d}-d-\frac{1}{2}}{d}$ for $d\geq 3$.
\end{construction}

In 2024, Li, Ma and Rong \cite{LMR2024} investigated this problem, by determining the precise values of $m(11)$ as well as $m(2^{d-1}-c)$ whenever $1\leqslant c\leqslant d$ and $d\geq 50$.
The result $m(11)=\frac{53}{10}$ confirmed a conjecture of Watanabe and Frankl \cite{FW1994} from 1994, while the latter one solved a problem posted by Piga and Sch\"{u}lke \cite{PS2021}. 

\begin{theorem}[Li, Ma and Rong \cite{LMR2024}]
$$m(11)=\frac{53}{10}.$$
\end{theorem}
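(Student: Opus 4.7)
The plan is to prove $m(11) = \tfrac{53}{10}$ by establishing matching asymptotic bounds for $m(n,11)/n$. By Lemma~\ref{1.1} this reduces at once to hereditary families, so the task becomes to show that the minimum size of a hereditary $\mathcal{F} \subseteq 2^{[n]}$ with $\delta(\mathcal{F}) \geq 12$ equals $\tfrac{53}{10}n + O(1)$.

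For the upper bound I would design a construction modelled on those displayed after Theorems~\ref{thm2^(d-1)-1} and~\ref{thmpiga1}: partition $[n]$ into blocks (of size $10$, or a small mixture of block sizes whose average is $10$), place on each block a fixed hereditary family $\mathcal{G}$ with $\delta(\mathcal{G}) \geq 12$ and normalised density exactly $\tfrac{53}{10}$, and if necessary add a sparse cross-block structure such as a perfect matching to lift the degree of vertices that are a single unit short. Once the right $\mathcal{G}$ is isolated, verifying $\delta(\mathcal{F}) \geq 12$ and $|\mathcal{F}|/n \to \tfrac{53}{10}$ becomes routine book-keeping.

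The lower bound is the crux. A first attempt imitates Frankl's proof of Theorem~\ref{thm2^(d-1)-1}: for every $x \in [n]$ the link $\mathcal{F}(x)$ is hereditary with $|\mathcal{F}(x)| \geq 12$, so Katona's form of the Kruskal--Katona theorem (Theorem~\ref{Kruskal-Katona theorem}) applied with $f(k) = 1/(k+1)$ gives
\[
\sum_{H\in \mathcal{F}(x)} \frac{1}{|H|+1} \;\geq\; \sum_{R\in \mathcal{R}(12)} \frac{1}{|R|+1} \;=\; \frac{31}{6},
\]
since $\mathcal{R}(12) = 2^{[3]} \cup \{\{4\},\{1,4\},\{2,4\},\{1,2,4\}\}$. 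Summing over $x$ yields only $|\mathcal{F}| \geq \tfrac{31}{6}n + 1$, and since $\tfrac{31}{6} < \tfrac{53}{10}$ this bound is not sharp. I would upgrade it in two steps: first, replace $f$ by carefully chosen non-uniform weights $w_j$ tuned via a small linear program so that $jw_j$ stays bounded while $\sum_{H \in \mathcal{F}(x)} w_{|H|+1}$ is maximised over every hereditary link of size $\geq 12$; second, couple this local bound with a structural analysis that forbids the Kruskal--Katona extremal link $\mathcal{R}(12)$ from being realised at too many vertices simultaneously, exploiting the squash operation of Proposition~\ref{squash trace} to put $\mathcal{F}$ in a canonical form where such interactions become visible.

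The main obstacle is this coupling step: any vertex-by-vertex weighting is intrinsically limited to the best ratio $M/c$ achievable in the corresponding linear program, and there is no reason to expect that optimum to equal $\tfrac{53}{10}$ on its own. Thus I anticipate the proof will combine an exchange argument that upgrades ``cheap'' links to the Kruskal--Katona extremal shape --- charging the incurred cost to the ambient family --- with a fine case analysis of how the near-extremal links of size exactly $12$ can coexist on adjacent vertices. Pinning the extremal configurations down tightly enough to match the construction, thereby confirming the 1994 conjecture of Watanabe and Frankl~\cite{FW1994}, should be the most technically demanding step of the plan.
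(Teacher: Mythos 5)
Your framework is the right one --- reduce to hereditary families via Lemma~\ref{1.1}, get the upper bound on the minimum size of a hereditary family with $\delta(\mathcal{F})\geq 12$ from a block construction, and get the lower bound from a weighted Kruskal--Katona argument --- and your computation that the unperturbed weighting $f(k)=1/(k+1)$ yields only $\sum_{R\in\mathcal{R}(12)}1/(|R|+1)=\tfrac{31}{6}<\tfrac{53}{10}$ is correct. Your anticipated strategy even matches what Li, Ma and Rong actually do: they perturb the weight function and run a global structural analysis, partitioning $[n]$ into ``piles'' (neighborhoods of low-weight vertices) and estimating the average weight in intersecting versus non-intersecting piles. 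But as written the proposal has a genuine gap: the entire lower bound, which is the content of the theorem, is deferred to ``an exchange argument'' and ``a fine case analysis'' that you do not supply. You correctly diagnose why no purely local (vertex-by-vertex) weighting can close the gap --- the extremal configuration is not a disjoint union of blocks --- but diagnosing the obstacle is not the same as overcoming it, and nothing in the proposal pins down how the near-extremal links of size $12$ interact across vertices. So this is a plan for a proof, not a proof.

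Two smaller corrections. First, note that $11=2^{4}-5=2^{d-1}-d$ with $d=5$, so the target value $\tfrac{53}{10}=\tfrac{2^{5}-5-\frac12}{5}$ is exactly the case $d=5$ of $m(2^{d-1}-d)=\tfrac{2^{d}-d-\frac12}{d}$; you need not search for the construction, since the Piga--Sch\"ulke construction displayed in Section~\ref{sec 4} (blocks $\binom{[id+1:(i+1)d]}{\leq d-2}$ of size $d=5$, one $(d-1)$-set per block, plus a cross-block matching on the last vertices) already gives a hereditary family of size $\tfrac{53}{10}n+1$ with $\delta\geq 12$ when $10\mid n$. Your guess of blocks of size $10$ is off; the cross-block matching between size-$5$ blocks is essential and is precisely why the local linear program cannot be tight. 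Second, for the lower bound you also need the limit $m(11)=\lim_n m(n,11)/n$ to be approached from the correct side for all $n$, not just $n$ divisible by $10$; Theorem~\ref{thmm(s)} guarantees the limit exists, but your argument must produce a bound of the form $|\mathcal{F}|\geq\tfrac{53}{10}n+O(1)$ valid for every $n$.
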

\begin{theorem}[Li, Ma and Rong \cite{LMR2024}]\label{thmour}
Let $d\geqslant 50$ and $1\leqslant c\leqslant d-1$. Then
$$ m(2^{d-1}-c)=\frac{2^{d}-c}{d} \ \ \textit{ and } \ \  m(2^{d-1}-d)=\frac{2^{d}-d-\frac{1}{2}}{d} .$$
\end{theorem}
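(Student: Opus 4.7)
The upper bounds in the two identities are supplied by the two constructions already displayed: Construction \ref{Construction} gives $m(2^{d-1}-c) \leq \frac{2^d - c}{d}$ for $1 \leq c \leq d - 1$, and the Piga--Sch\"ulke construction gives $m(2^{d-1}-d) \leq \frac{2^d - d - 1/2}{d}$. The main task is therefore the matching lower bounds, which, by Lemma \ref{1.1}, reduces to showing that any hereditary family $\mathcal{F} \subseteq 2^{[n]}$ with $\delta(\mathcal{F}) \geq 2^{d-1} - c + 1$ satisfies $|\mathcal{F}| \geq \frac{2^d - c}{d} n + 1$ (and an analogous statement in the $c = d$ case with the $\frac{1}{2d}$ correction), followed by taking $n \to \infty$.

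My plan is to start from Frankl's double-counting identity $|\mathcal{F}| - 1 = \sum_{x \in [n]} \sum_{H \in \mathcal{F}(x)} \frac{1}{|H|+1}$ from the proof of Theorem \ref{thm2^(d-1)-1}, combined with Theorem \ref{Kruskal-Katona theorem} applied to each link $\mathcal{F}(x)$. A naive lower bound through the colex family $\mathcal{R}(2^{d-1} - c + 1)$ falls strictly short of $\frac{2^d - c}{d}$ whenever $c \geq 3$: after $[d-1]$ itself, the next largest subsets in colex order all have size at most $d - 2$, so their removal is overpenalized by the weight $\frac{1}{|H|+1}$. To close this gap I would classify each $x \in [n]$ as \emph{clumped}, meaning that there exists a $d$-element set $U \ni x$ with $|\mathcal{F} \cap 2^{U}| \geq 2^d - c + 1$, and \emph{scattered} otherwise. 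For clumped vertices, a uniqueness argument pins down a canonical $U = U(x)$ and shows that distinct canonical clumps are disjoint; summing over a maximal disjoint collection contributes exactly $\frac{2^d - c}{d}$ per vertex, matching Construction \ref{Construction}. For scattered vertices, the link $\mathcal{F}(x)$ is quantitatively far from the extremal colex family, and a stability refinement of Theorem \ref{Kruskal-Katona theorem} should supply a strict surplus $\sum_{H} \frac{1}{|H|+1} \geq \frac{2^d - c}{d} + \eta$ with a definite $\eta = \eta(d, c) > 0$, which when aggregated compensates for every deficit in the global budget.

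The main obstacle is calibrating the stability surplus $\eta$ to be large enough to absorb the loss near overlapping or incomplete clumps, uniformly in $c \leq d - 1$; the hypothesis $d \geq 50$ enters precisely here to provide the asymptotic room in inequalities involving $\binom{d-1}{k}$ and $\frac{1}{k+1}$ that makes the calibration work. The boundary case $c = d$ needs a finer ingredient: in this regime the extremal configuration no longer splits into fully disjoint clumps but pairs adjacent $d$-blocks via a single shared doubleton edge, producing exactly the $-\frac{1}{2}$ correction in $\frac{2^d - d - 1/2}{d}$. One must rule out any denser pairing pattern by analyzing how vertices with degree near $2^{d-1} - d + 1$ distribute their extra support across clump boundaries and combining this with the stability bound to match the $\frac{1}{2d}$ correction on the nose.
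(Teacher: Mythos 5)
Your overall architecture matches what the survey reports about the actual proofs: upper bounds from Construction \ref{Construction} and the Piga--Sch\"ulke construction, lower bounds via Lemma \ref{1.1} together with the weighted double count $|\mathcal{F}\setminus\{\emptyset\}|=\sum_{x\in[n]}\sum_{H\in\mathcal{F}(x)}\frac{1}{|H|+1}$ and Theorem \ref{Kruskal-Katona theorem}, and a dichotomy between vertices sitting inside a near-complete $d$-block and the rest (Li--Ma--Rong's ``piles'', Reiher--Sch\"ulke's ``conglomerates''). But as a proof the proposal has two genuine gaps, located exactly where the real work lies.

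First, the assertion that ``a uniqueness argument pins down a canonical $U=U(x)$ and shows that distinct canonical clumps are disjoint'' is unsubstantiated and assumes away the central difficulty: near-complete $d$-blocks can overlap, and both published arguments spend most of their effort on this. Li--Ma--Rong must separately estimate the average weight in \emph{intersecting} versus \emph{non-intersecting} piles, and Reiher--Sch\"ulke only establish that two conglomerates share at most one vertex and then have to control the weight lost along such shared vertices; neither gets disjointness for free. Second, the ``stability refinement of Theorem \ref{Kruskal-Katona theorem}'' producing a uniform surplus $\eta(d,c)>0$ for scattered vertices is invoked but never stated or proven, and it is the engine of the entire lower bound; likewise, in the $c=d$ case the claim that the only competitive configuration pairs adjacent blocks through a single doubleton (yielding the $-\frac{1}{2}$ correction) is precisely the content of the theorem in that regime and is asserted rather than argued. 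The proposal is therefore a plausible plan consistent with the known strategy, not a proof.
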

In their proof of Theorem \ref{thmour}, they partitioned the ground set $[n]$ into several ``piles'', which are neighborhoods of vertices with small weight. The proof focused on estimating the average weight in ``intersecting piles'' and ``non-intersecting piles''.
The result $m(11)=\frac{53}{10}$ was proved by performing a perturbation on the weight function.
With the determination of $m(11)$,  all the values of $m(s)$ for $s\leqslant 16$ are now known, and for convenience we list them all in the following table.
\smallskip
	
	\begin{center}
        \renewcommand{\arraystretch}{1.5}
		\begin{tabular}{|c|*{17}{C{0.5cm}|}}
        \hline
			$s=$ & $0$ & $1$ & $2$ & $3$ & $4$ & $5$ & $6$ & $7$ & $8$ & $9$ & $10$ & $11$ & $12$
			& $13$ & $14$ & $15$ & $16$ \\ \hline
			$m(s)=$ & $1$ & $\frac 32$ & $2$ & $\frac 73$ & $\frac{17}6$ & $\frac{13}4$
			& $\frac 72$ & $\frac{15}4$ & $\frac{17}4$ & $\frac{65}{14}$ & $5$ & $\frac{53}{10}$
			& $\frac{28}5$ & $\frac{29}5$ & $6$ & $\frac{31}5$ & $\frac{67}{10}$ \\
        \hline
		\end{tabular}
	\end{center}	
\smallskip

Very recently, Reiher and Sch\"{u}lke \cite{RS2025} independently obtained similar results. 
In their proof, they grouped vertices together into ``conglomerates'', which are $d$-element sets and contain most of hyperedges at some vertices. Then they managed to show that the average weight in each conglomerate is sufficiently large and any two conglomerates intersect in at most one vertex.
Using their approach, they proved that $m(11) = \frac{53}{10}$ and established the following result on $m(2^{d-1} - c)$ for $1 \leq c \leq d-1$, which closes the gap for $6 \leq d \leq 49$ in Theorem \ref{thmour}.
\begin{theorem}[Reiher and Sch\"{u}lke \cite{RS2025}]\label{thm:main}
Let $d$ and $c$ be positive integers with $1\leqslant c\leqslant d-1$.
Then $$m(2^{d-1}-c)=\frac{2^{d}-c}{d}. $$
\end{theorem}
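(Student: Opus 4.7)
By Construction~\ref{Construction} the upper bound $m(2^{d-1}-c)\leq\frac{2^d-c}{d}$ is immediate (take the family $\mathcal{F}(n,d,c)$ with $d\mid n$), so the task reduces to proving the matching lower bound. Using Lemma~\ref{1.1}, it suffices to show that every hereditary family $\mathcal{F}\subseteq 2^{[n]}$ with minimum degree $\delta(\mathcal{F})\geq 2^{d-1}-c+1$ satisfies $|\mathcal{F}|\geq\frac{2^d-c}{d}\,n-O(1)$. The natural starting point is Frankl's double-counting identity from the proof of Theorem~\ref{thm2^(d-1)-1}:
$$
|\mathcal{F}|-1 \;=\; \sum_{v\in[n]} w(v), \qquad w(v) \;:=\; \sum_{H\in\mathcal{F}(v)}\frac{1}{|H|+1}.
$$
A pointwise application of Theorem~\ref{Kruskal-Katona theorem} with $f(k)=1/(k+1)$ only gives $w(v)\geq\sum_{R\in\mathcal{R}(2^{d-1}-c+1)}\tfrac{1}{|R|+1}$, and a short calculation (for instance with $d=4$, $c=3$) shows this already drops below the target $\frac{2^d-c}{d}$ once $c\geq 2$. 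Weights must therefore be redistributed between vertices, which is precisely the role of the \emph{conglomerates} announced in the paragraph preceding the theorem.

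The plan is organised into four steps. \emph{Step 1 (producing conglomerates).} For each vertex $v$, if $|\mathcal{F}(v)|\geq 2^{d-1}$ then Katona already gives $w(v)\geq\frac{2^d-1}{d}\geq\frac{2^d-c}{d}$ and $v$ is ``easy''. Otherwise a stability form of Theorem~\ref{Kruskal-Katona theorem} (using that $\mathcal{R}(m)\subseteq 2^{[d-1]}$ for $m<2^{d-1}$ and analysing the equality cases) should pin the link $\mathcal{F}(v)$ inside $2^{D_v}$ for a canonical $(d-1)$-set $D_v\subseteq[n]\setminus\{v\}$; set $C_v:=\{v\}\cup D_v$. \emph{Step 2 (almost linearity).} If two distinct conglomerates met in at least two vertices, the concentrations from Step~1 combined with hereditariness would propagate enough common subsets between the two links to force $D_v=D_{v'}$, contradicting $C_v\neq C_{v'}$. \emph{Step 3 (average weight on a conglomerate).} For a fixed conglomerate $C$, the concentration of every link $\mathcal{F}(v)$ ($v\in C$) inside $2^{C\setminus\{v\}}$ together with $\delta(\mathcal{F})\geq 2^{d-1}-c+1$ forces $|\mathcal{F}\cap 2^C|\geq 2^d-c+1$; a direct double counting then yields
$$
\sum_{v\in C} w(v) \;\geq\; |\mathcal{F}\cap 2^C|-1 \;\geq\; 2^d-c,
$$
so the average weight on $C$ meets the target. \emph{Step 4 (summation).} Because the conglomerates pairwise intersect in at most one vertex and cover every hard vertex, summing Step~3 over all distinct conglomerates and adding the easy-vertex contributions gives $|\mathcal{F}|-1\geq\frac{2^d-c}{d}\,n-O(1)$, from which $m(2^{d-1}-c)\geq\frac{2^d-c}{d}$ follows upon dividing by $n$ and letting $n\to\infty$.

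The main obstacles I expect are Step~1 and Step~3. Step~1 demands a quantitatively sharp stability version of Katona's theorem pinning $\mathcal{F}(v)$ to a \emph{specific} $(d-1)$-set throughout the whole range $1\leq c\leq d-1$, where the excess $c-1$ can be almost as large as $d-1$ and so the ``concentration'' is rather mild and must be squeezed out of hereditariness. Step~3 is the central bookkeeping challenge: the weight contributed by sets of $\mathcal{F}$ that straddle the conglomerate $C$ must be either safely discarded or charged to a neighbouring conglomerate without double counting, and one must be careful enough that no constant is lost per conglomerate (since any such loss is multiplied by $n/d$ and would ruin the bound). It is precisely this refined accounting that allowed Reiher and Sch\"ulke to eliminate the $d\geq 50$ restriction present in Theorem~\ref{thmour}.
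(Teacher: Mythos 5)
The paper you are being compared against is a survey: it does not prove this theorem, but only cites \cite{RS2025} and describes the strategy in one sentence (conglomerates, average weight per conglomerate, pairwise intersections of size at most one). Your upper bound via Construction~\ref{Construction} and your reduction of the lower bound to hereditary families of minimum degree at least $2^{d-1}-c+1$ via Lemma~\ref{1.1} are both correct, and your four-step plan is a faithful expansion of the survey's description. But the plan is not a proof: all of the mathematical content sits in Steps~1, 3 and~4, and each of these has a concrete gap as written.

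In Step~1, the dichotomy ``either $w(v)\geq\frac{2^d-1}{d}$ or $\mathcal{F}(v)$ is pinned inside $2^{D_v}$ for a single $(d-1)$-set $D_v$'' is not a consequence of any stability version of Theorem~\ref{Kruskal-Katona theorem}: a hereditary link of size $2^{d-1}-c+1$ can be far from every $(d-1)$-subcube while still having $|\mathcal{F}(v)|<2^{d-1}$ (e.g.\ a large star of singletons), so a genuine trichotomy with a separate weight estimate for ``spread-out'' links is needed. In Step~3, even granting exact containment $\mathcal{F}(v)\subseteq 2^{C\setminus\{v\}}$ for one $v\in C$, heredity and the degree bound only give $|\mathcal{F}\cap 2^C|\geq 2(2^{d-1}-c+1)=2^d-2c+2$, which is strictly less than the $2^d-c+1$ you need whenever $c\geq 2$; recovering the missing $c-1$ sets requires combining the links of several vertices of $C$, and this is exactly the part that is asserted rather than proved. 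In Step~4, since the conglomerates are only known to intersect pairwise in at most one vertex, they need not be disjoint, and summing the per-conglomerate bounds double-counts the weight of shared vertices; for two conglomerates sharing a vertex $u$ one only gets $\sum_{v\in C_1\cup C_2}w(v)\geq 2(2^d-c)-w(u)$, which is useless without an upper bound on $w(u)$, so the final summation does not follow as stated. (A minor point: your claim that the pointwise Katona bound already fails for $c=2$ is incorrect --- for $c=2$ the colex family $\mathcal{R}(2^{d-1}-1)$ loses only the set $[d-1]$, giving exactly $\frac{2^d-2}{d}$, which is how Theorem~\ref{thm2^(d-1)-2} is proved; the pointwise argument first fails at $c=3$.) In short, your outline matches the announced strategy of Reiher and Sch\"ulke, but the theorem is not established by it.
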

These aforementioned theorems determine the value of $m(s)$ when $s$ is slightly smaller than a power of $2$. The problem of determining $m(s)$ remains open when $s$ lies further away from powers of two, or when $s$ slightly exceeds powers of two.

\section{Trace Functions}\label{sec 5}

The framework of arrow relation $(n, m) \rightarrow (a, b)$ naturally extends to a variety of extremal problems in combinatorial set theory. In Section \ref{sec 3}, we examined defect Sauer results, which focus on determining the minimum $m$ attainable under fixed parameters $n$, $a$, and $b$. 
This section focuses on a complementary extremal problem of equal significance: for prescribed values of $n$, $m$, and $a$, what is the maximum $b$ such that the relation $(n, m) \rightarrow (a, b)$ holds?

In order to formalize the optimization of $b$ in $(n, m) \rightarrow (a, b)$, let us introduce the following definition.
\begin{definition}
    For a family $\mathcal{F}\subseteq 2^{[n]}$ and $1\leq a\leq n$, the \emph{trace function} is defined as $\Tr(\mathcal{F},a)=\max_{T\in \binom{[n]}{a}}|\mathcal{F}_{|T}|$. Furthermore, let $\Tr(n,m,a)$ denote the maximum size of traces on $a$-element sets guaranteed in any families of size $m$, that is
\[
\Tr(n,m,a)=\min_{\substack{\mathcal{F}\subseteq 2^{[n]} \\ |\mathcal{F}|=m}}\Tr(\mathcal{F},a)=\min_{\substack{\mathcal{F}\subseteq 2^{[n]} \\ |\mathcal{F}|=m}}\max_{T\in \binom{[n]}{a}}|\mathcal{F}_{|T}|.
\]
\end{definition}
Combining with the arrow relation, we have the following proposition.
\begin{proposition}
    For positive integers $n$, $m$ and $a$, $\Tr(n,m,a)$ is precisely the maximum $b$ for which the arrow relation $(n, m) \rightarrow (a, b)$ holds.
\end{proposition}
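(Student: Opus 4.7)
The proposition is essentially a definitional translation, so my plan is to verify the two directions carefully, being mindful that the arrow relation is stated with $|\mathcal{F}|\geq m$ while the trace function $\Tr(n,m,a)$ is defined as a minimum over families of size exactly $m$.

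First I would set $b^{*}=\Tr(n,m,a)$ and aim to show two things: (i) the relation $(n,m)\rightarrow(a,b^{*})$ holds, and (ii) the relation $(n,m)\not\rightarrow(a,b^{*}+1)$ fails. Together these imply $b^{*}$ is exactly the maximum $b$ with $(n,m)\rightarrow(a,b)$.

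For (i), let $\mathcal{F}\subseteq 2^{[n]}$ be arbitrary with $|\mathcal{F}|\geq m$. I would first record the easy monotonicity observation that if $\mathcal{F}_{0}\subseteq \mathcal{F}$, then $\mathcal{F}_{0|T}\subseteq \mathcal{F}_{|T}$ for every $T\subseteq [n]$, so $\Tr(\mathcal{F}_{0},a)\leq \Tr(\mathcal{F},a)$. Choosing any subfamily $\mathcal{F}_{0}\subseteq \mathcal{F}$ with $|\mathcal{F}_{0}|=m$, the defining minimum of $b^{*}$ gives $\Tr(\mathcal{F}_{0},a)\geq b^{*}$, whence $\Tr(\mathcal{F},a)\geq b^{*}$; thus some $a$-element $T$ satisfies $|\mathcal{F}_{|T}|\geq b^{*}$, proving $\mathcal{F}\rightarrow(a,b^{*})$.

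For (ii), since $b^{*}=\Tr(n,m,a)$ is a minimum attained over finitely many families of size $m$, I would pick an extremal family $\mathcal{F}^{*}\subseteq 2^{[n]}$ with $|\mathcal{F}^{*}|=m$ and $\Tr(\mathcal{F}^{*},a)=b^{*}$. Then for every $T\in\binom{[n]}{a}$ we have $|\mathcal{F}^{*}_{|T}|\leq b^{*}<b^{*}+1$, so $\mathcal{F}^{*}\not\rightarrow(a,b^{*}+1)$, witnessing $(n,m)\not\rightarrow(a,b^{*}+1)$. Combining (i) and (ii) yields the proposition.

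There is no substantive obstacle here: the only point requiring a line of care is reconciling the $\geq m$ in the arrow relation with the $=m$ in $\Tr(n,m,a)$, which is handled by the one-line monotonicity argument above. I would keep the write-up to a short paragraph, explicitly exhibiting the extremal $\mathcal{F}^{*}$ for the negative direction so that the reader sees the symmetry between the two inequalities.
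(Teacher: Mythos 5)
Your proof is correct and is exactly the definitional unpacking that the paper treats as immediate (the proposition is stated there without proof). The one point of genuine care -- reconciling the $|\mathcal{F}|\geq m$ in the arrow relation with the $|\mathcal{F}|=m$ in the definition of $\Tr(n,m,a)$ via the monotonicity of traces under taking subfamilies -- is handled correctly, and the extremal witness for the failure of $(n,m)\rightarrow(a,b^{*}+1)$ completes the argument.
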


A fundamental contribution to this direction was established by Bollobás and Radcliffe~\cite{BR1995}, who determined a general lower bound for $\Tr(n,m,a)$:
\begin{theorem}[Bollobás and Radcliffe~\cite{BR1995}]\label{vol}
For any positive integers $a\leq n$ and $1\leqslant m\leqslant 2^{n}$, we have
\[
\Tr(n,m,a)\geqslant m^{a/n}.
\]
\end{theorem}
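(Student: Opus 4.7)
The plan is to establish the inequality via Shearer's entropy inequality, which yields the bound essentially immediately once set up correctly.

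Fix a family $\mathcal{F} \subseteq 2^{[n]}$ with $|\mathcal{F}| = m$. Identifying each $F \in \mathcal{F}$ with its characteristic vector in $\{0,1\}^n$, let $X = (X_1, \ldots, X_n)$ be the random vector obtained by sampling $F$ uniformly from $\mathcal{F}$; then the Shannon entropy satisfies $H(X) = \log m$. For every $T \subseteq [n]$, the marginal $X_T = (X_i)_{i \in T}$ is supported on a set in bijection with the trace $\mathcal{F}_{|T}$, so $H(X_T) \leq \log |\mathcal{F}_{|T}|$.

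Next I would invoke Shearer's inequality with the covering family $\binom{[n]}{a}$. Since every element of $[n]$ lies in exactly $\binom{n-1}{a-1}$ sets of this family, Shearer yields
$$\binom{n-1}{a-1} \log m = \binom{n-1}{a-1} H(X) \leq \sum_{T \in \binom{[n]}{a}} H(X_T) \leq \sum_{T \in \binom{[n]}{a}} \log |\mathcal{F}_{|T}|.$$
Dividing by $\binom{n}{a}$, using the identity $\binom{n-1}{a-1}/\binom{n}{a} = a/n$, and bounding the resulting average of $\log |\mathcal{F}_{|T}|$ by its maximum, gives
$$\frac{a}{n} \log m \leq \log \max_{T \in \binom{[n]}{a}} |\mathcal{F}_{|T}| = \log \Tr(\mathcal{F}, a),$$
and exponentiating produces $\Tr(\mathcal{F}, a) \geq m^{a/n}$. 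Taking the minimum over all families of size $m$ then gives $\Tr(n,m,a) \geq m^{a/n}$.

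The main obstacle is simply having Shearer's inequality on hand, since it is not mentioned earlier in the excerpt; one would either cite it as a standard fact from information theory or derive it in a few lines from subadditivity and the chain rule for entropy. A purely combinatorial alternative is the Loomis--Whitney-type product inequality $|\mathcal{F}|^{\binom{n-1}{a-1}} \leq \prod_{T \in \binom{[n]}{a}} |\mathcal{F}_{|T}|$, followed by the AM--GM step that the maximum dominates the geometric mean, but this inequality is essentially Shearer's bound in disguise and does not shorten the argument.
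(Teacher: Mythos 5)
Your argument is correct. The survey states Theorem~\ref{vol} without reproducing a proof, so there is nothing in the text to compare against line by line; judged on its own, your entropy argument is complete and sound: $H(X)=\log m$ for $X$ uniform on $\mathcal{F}$, the support of the marginal $X_T$ is in bijection with $\mathcal{F}_{|T}$ so $H(X_T)\leq \log|\mathcal{F}_{|T}|$, and Shearer's inequality applied to the $a$-uniform cover $\binom{[n]}{a}$ (each point covered exactly $\binom{n-1}{a-1}$ times, with $\binom{n-1}{a-1}/\binom{n}{a}=a/n$) gives exactly $\Tr(\mathcal{F},a)\geq m^{a/n}$; minimizing over $\mathcal{F}$ finishes it. The only dependency is Shearer's lemma itself, which you rightly flag as needing a citation or a short derivation from submodularity of entropy. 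For the record, Bollob\'{a}s and Radcliffe's original route is the purely combinatorial one you mention as an alternative: the uniform-cover product inequality $|\mathcal{F}|^{\binom{n-1}{a-1}}\leq \prod_{T\in\binom{[n]}{a}}|\mathcal{F}_{|T}|$ (a Loomis--Whitney/box-theorem statement, proved by induction on $n$ via compressions), followed by bounding the geometric mean by the maximum. The two proofs are essentially equivalent in content; the entropy version is shorter if Shearer is taken as known, while the combinatorial version is self-contained and fits the squashing machinery developed in Section~\ref{sec 2}.
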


Although Theorem~\ref{vol} is concise and useful, it does not necessarily give the best possible bound for $b$. Specifically, let $m=n^{r}$ and $a=\alpha n$, Theorem~\ref{vol} shows that $(n,n^{r})\rightarrow (\alpha n,n^{\alpha r})$. The following result of Bollob\'{a}s and Radcliffe~\cite{BR1995} improves this bound by using probabilistic method.
Let $H(x) = -x \log_{2} x - (1 - x) \log_{2}(1 - x)$ denote the binary entropy function.
\begin{theorem}[Bollob\'{a}s and Radcliffe~\cite{BR1995}]\label{alpha n}
    For any integer $r\geqslant 2$ and real number $0<\alpha <1$, we have
\[
\Tr(n,n^{r},\alpha n)\geqslant (1-o(1))n^{\lambda r},
\]
where we set $\lambda_{0}=\log_{2}(1+\alpha)$, and $\lambda$ is defined as
\[
\lambda =\begin{cases}
\hfil \frac{\lambda_{0}}{H(\lambda_{0})}  &\mbox{ if } \alpha \in (0,\sqrt{2}-1); \\
\hfil \lambda_{0}  &\mbox{ if } \alpha \in [\sqrt{2}-1,1).
\end{cases}
\]
\end{theorem}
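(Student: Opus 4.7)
The plan is to reduce to hereditary families via Lemma~\ref{1.1} and then combine a probabilistic averaging over random $a$-subsets with Katona's weighted Kruskal--Katona inequality (Theorem~\ref{Kruskal-Katona theorem}).

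After the reduction, I may assume $\mathcal{F}\subseteq 2^{[n]}$ is hereditary with $|\mathcal{F}|=m=n^{r}$, so that $|\mathcal{F}_{|T}|=|\mathcal{F}\cap 2^{T}|$ for every $T\subseteq[n]$. Drawing $T$ uniformly from $\binom{[n]}{a}$ with $a=\alpha n$ yields
\[
\mathbb{E}_{T}\big[|\mathcal{F}_{|T}|\big]=\sum_{F\in\mathcal{F}}\mathbb{P}[F\subseteq T]=\sum_{F\in\mathcal{F}}\frac{\binom{a}{|F|}}{\binom{n}{|F|}}.
\]
The weight $f(k)=\binom{a}{k}/\binom{n}{k}$ satisfies $f(k+1)/f(k)=(a-k)/(n-k)\leqslant 1$ and is therefore monotone non-increasing, so Theorem~\ref{Kruskal-Katona theorem} gives
\[
\mathbb{E}_{T}\big[|\mathcal{F}_{|T}|\big]\geqslant\sum_{R\in\mathcal{R}(m)}f(|R|)\geqslant\sum_{i=0}^{k_{0}}\binom{k_{0}}{i}\frac{\binom{a}{i}}{\binom{n}{i}},
\]
where $k_{0}=\lfloor\log_{2}m\rfloor$ and I used $\mathcal{R}(m)\supseteq 2^{[k_{0}]}$. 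Because $k_{0}=r\log_{2}n+O(1)$, the estimate $\binom{a}{i}/\binom{n}{i}=\alpha^{i}(1+o(1))$ holds uniformly for $0\leqslant i\leqslant k_{0}$, so the sum equals $(1+o(1))(1+\alpha)^{k_{0}}=(1+o(1))n^{r\lambda_{0}}$. Combined with $\max_{T}|\mathcal{F}_{|T}|\geqslant\mathbb{E}_{T}[|\mathcal{F}_{|T}|]$, this settles the theorem throughout $\alpha\in[\sqrt{2}-1,1)$, where $\lambda=\lambda_{0}$.

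For the small regime $\alpha\in(0,\sqrt{2}-1)$, where the theorem demands the strictly larger exponent $\lambda=\lambda_{0}/H(\lambda_{0})$, I would boost the averaging step by chaining it with the Sauer--Shelah Lemma (Theorem~\ref{Sauer}). Concretely, first apply the averaging argument on an intermediate set $T'$ of size $a'=\beta n$ with $\beta\in[\sqrt{2}-1,1)$, obtaining $|\mathcal{F}_{|T'}|\geqslant(1+o(1))m^{\log_{2}(1+\beta)}$. The restricted family $\mathcal{F}_{|T'}$ now sits inside $2^{T'}$, and Sauer--Shelah extracts a shattered set $S\subseteq T'$ whose size $d$ satisfies $\sum_{i<d}\binom{a'}{i}<|\mathcal{F}_{|T'}|$. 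Extending $S$ inside $[n]$ to an $a$-subset $T\supseteq S$ (legitimate since $d\leqslant a$ for the eventual choice of $\beta$) produces $|\mathcal{F}_{|T}|\geqslant 2^{d}$. The entropy asymptotic $\sum_{i\leqslant d}\binom{a'}{i}\approx 2^{a'H(d/a')}$ lets one pick $\beta$ so that $a'H(d/a')\approx\log_{2}(1+\beta)\cdot r\log_{2}n$; solving this constraint for the target $d\approx (r\lambda_{0}/H(\lambda_{0}))\log_{2}n$ should produce the claimed exponent.

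The main obstacle is this final optimization in the small-$\alpha$ regime. The Sauer--Shelah extraction inevitably loses an additive polylog term in the shattered dimension $d$, and carrying this through the identity relating $H(d/a')$, $\log_{2}(1+\beta)$, and $\log_{2}(1+\alpha)$ requires careful bookkeeping so that the loss does not seep into the leading-order exponent. A cleaner alternative, if the two-stage extraction proves unwieldy, is to redesign the weight $f$ inside Katona's inequality itself so that the averaging directly reflects the entropy profile at scale $\lambda_{0}$, collapsing the analysis into a single sharper application of Theorem~\ref{Kruskal-Katona theorem}.
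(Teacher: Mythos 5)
The survey states this theorem without proof (it is quoted from \cite{BR1995} with only the remark that it is proved ``by using probabilistic method''), so your proposal must stand on its own. The first half does. Reducing to hereditary families via Lemma~\ref{1.1}, using $|\mathcal{F}_{|T}|=|\mathcal{F}\cap 2^{T}|$ for hereditary $\mathcal{F}$, averaging over a uniform $a$-set, and feeding the non-increasing weight $f(k)=\binom{a}{k}/\binom{n}{k}$ into Theorem~\ref{Kruskal-Katona theorem} is a correct first-moment argument and yields $\Omega(n^{\lambda_{0}r})$, which settles the regime $\alpha\in[\sqrt{2}-1,1)$. Two small points: truncating $\mathcal{R}(m)$ to $2^{[k_{0}]}$ costs a multiplicative factor of up to $1+\alpha$ because $m=n^{r}$ need not be a power of $2$; to get the literal $(1-o(1))$ constant you should keep the whole colex segment and use $\sum_{R\in\mathcal{R}(m)}\alpha^{|R|}\geqslant m^{\log_{2}(1+\alpha)}$. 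Also $a=\alpha n$ needs rounding.

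The small-$\alpha$ half, which is where the real content of the theorem lies, does not work as described, and the failure is not the ``additive polylog loss'' you flag: it is a factor of $\log n$ in the shattered dimension. Applying Sauer--Shelah to $\mathcal{F}_{|T'}\subseteq 2^{T'}$ with $|T'|=a'=\beta n$ and $|\mathcal{F}_{|T'}|=N$ only guarantees a shattered set of size $d$ satisfying $\sum_{i<d}\binom{a'}{i}<N$. Since $\sum_{i<d}\binom{a'}{i}\geqslant\binom{\beta n}{d-1}=n^{(1-o(1))(d-1)}$ whenever $d=n^{o(1)}$, and $N\leqslant n^{r}$, this forces $d=O(1)$, hence $2^{d}=O(1)$ --- nowhere near the required $2^{d}=n^{\lambda r}$, which would need $d=\Theta(\log n)$. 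Equivalently, to shatter a set of size $c\log n$ inside a ground set of linear size, Sauer--Shelah demands at least $\binom{\Theta(n)}{c\log n-1}=n^{\Theta(\log n)}$ sets, vastly more than the $n^{r}$ available. Your estimate $\sum_{i\leqslant d}\binom{a'}{i}\approx 2^{a'H(d/a')}$ obscures this because $d/a'\to 0$ and $a'H(d/a')\sim d\log_{2}n$, which is $\Theta(\log^{2}n)$ for $d=\Theta(\log n)$, not $O(\log n)$. So the exponent $\lambda_{0}/H(\lambda_{0})$ cannot be reached by a first moment followed by shattering on a linear-size ground set; a genuinely different mechanism is needed --- for instance a dichotomy in which either the averaging already beats $n^{\lambda r}$, or $\mathcal{F}$ is forced to concentrate a large subfamily on a core $S$ of only $O(\log n)$ coordinates, after which one takes $T\supseteq S$ deterministically and counts subsets of $S$ of size about $\lambda_{0}|S|$ via $\binom{|S|}{\lambda_{0}|S|}\approx 2^{|S|H(\lambda_{0})}$. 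That is where the entropy function and the restriction $\lambda_{0}\leqslant\frac{1}{2}$ (i.e.\ $\alpha\leqslant\sqrt{2}-1$) naturally enter. As written, your argument establishes the theorem only for $\alpha\in[\sqrt{2}-1,1)$.
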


Theorem~\ref{alpha n} states that $\Tr(n, n^{r}, \alpha n) = \Omega(n^{\lambda r})$. Additionally, there are several influential results on estimating trace functions. 
One notable example is the following classical result by Kahn, Kalai and Linial~\cite{KKL1988}.
\begin{theorem}[Kahn, Kalai and Linial~\cite{KKL1988}]
    For any integer $n\geq 1$ and any $0<\alpha, \beta<1$, we have
    \[
    \Tr(n,\beta 2^{n},\alpha n)\geqslant (1-n^{-c})2^{\alpha n},
    \]
    where $c=c(\alpha,\beta)$ depends only on $\alpha$ and $\beta$.
\end{theorem}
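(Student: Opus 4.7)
The plan is to translate the statement into a random-restriction problem on the Boolean cube and then deploy discrete Fourier analysis together with the Bonami--Beckner hypercontractive inequality, in the spirit of the KKL methodology. Set $f = \mathbf{1}_{\mathcal{F}}$ and, for $T \in \binom{[n]}{\alpha n}$ and $y \in \{0,1\}^T$, define the conditional density $h_T(y) = \mathbb{E}_z\!\left[f(z) \;\middle|\; z|_T = y\right]$. Then $y \in \mathcal{F}|_T$ if and only if $h_T(y) > 0$, so by averaging over $T$ it suffices to prove
\[
\Pr_{T,y}\!\left[h_T(y) = 0\right] \;\leqslant\; n^{-c},
\]
with $T$ uniform in $\binom{[n]}{\alpha n}$ and $y$ uniform in $\{0,1\}^T$. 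This is exactly the probability that a uniformly random subcube of codimension $\alpha n$ entirely misses $\mathcal{F}$.

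Next, expanding $f = \sum_S \hat{f}(S)\chi_S$ in the Walsh basis gives $h_T = \sum_{S \subseteq T}\hat{f}(S)\chi_S$, and hence $\|h_T\|_2^2 = \sum_{S \subseteq T}\hat{f}(S)^2$. Averaging over $T$ and using $\Pr_T[S \subseteq T] \leqslant \alpha^{|S|}$ yields
\[
\mathbb{E}_T\|h_T\|_2^2 \;\leqslant\; \sum_S \alpha^{|S|}\hat{f}(S)^2 \;=\; \|T_{\sqrt{\alpha}}f\|_2^2 \;\leqslant\; \|f\|_{1+\alpha}^2 \;=\; \beta^{2/(1+\alpha)},
\]
the penultimate step being Bonami--Beckner. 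A Paley--Zygmund application then gives the constant lower bound $\Pr_{T,y}[h_T > 0] \geqslant \beta^{2\alpha/(1+\alpha)}$, so a trace of size a constant fraction of $2^{\alpha n}$ comes for free.

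To upgrade this constant gap to the polynomial rate $1 - n^{-c}$, I would split the Fourier expansion at a cut-off $L = L(\alpha,\beta)$ of order $\log n$ and treat the two parts separately. The high-degree tail $\sum_{|S|>L}\hat{f}(S)^2$ is polynomially small by a hypercontractive estimate with a slightly stronger noise parameter, so its contribution to $\mathbb{E}_T\|h_T\|_2^2$ is already $n^{-\Omega(1)}$. For the low-degree part, combining the Booleanness $f \in \{0,1\}$ (which forces $h_T \in [0,1]$) with a Chernoff-type concentration bound for bounded-degree polynomials shows that $h_T(y)$ lies within $n^{-\Omega(1)}$ of $\beta$ away from a set of measure $n^{-\Omega(1)}$. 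Collecting these contributions yields $\Pr_{T,y}[h_T(y) = 0] \leqslant n^{-c(\alpha,\beta)}$, which is exactly the required bound.

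The hard part is precisely this final step: a pure second-moment bound inherently caps the rate at a constant, so the desired $n^{-c}$ tail has to come from fully exploiting the Booleanness of $f$ via hypercontractivity at logarithmic degree---the signature of the KKL technique. Tracking the dependence of $c$ on both $\alpha$ and $\beta$ will be delicate, since $\alpha$ simultaneously governs the strength of the Bonami--Beckner estimate (through the parameter $\sqrt{\alpha}$) and the concentration rate of $h_T$ around its mean $\beta$.
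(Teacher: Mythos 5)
The survey states this theorem without proof (it is quoted from \cite{KKL1988}), so I am assessing your argument on its own terms. Your Steps 1--3 are correct as computations: the identity $h_T=\sum_{S\subseteq T}\hat f(S)\chi_S$, the bound $\Pr_T[S\subseteq T]\leqslant\alpha^{|S|}$, the hypercontractive estimate $\mathbb{E}_T\|h_T\|_2^2\leqslant\|T_{\sqrt\alpha}f\|_2^2\leqslant\beta^{2/(1+\alpha)}$, and the Paley--Zygmund step all check out, and together they do prove the genuinely weaker statement $\Tr(n,\beta 2^n,\alpha n)\geqslant\beta^{2\alpha/(1+\alpha)}2^{\alpha n}$.

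The gap is that your very first reduction is to a statement that is \emph{false}, so no amount of refinement in Step 4 can close the argument. Take the dictator family $\mathcal{F}=\{F\subseteq[n]:1\in F\}$, so $\beta=1/2$. If $1\in T$ then $h_T(y)=y_1$, hence $h_T(y)=0$ for half of all $y$; therefore $\Pr_{T,y}[h_T(y)=0]=\alpha/2$ for $T$ uniform in $\binom{[n]}{\alpha n}$, a constant rather than $n^{-c}$. The theorem itself survives because it only asserts the existence of one good $T$ (any $T$ with $1\notin T$ gives a full trace here), but this shows the bound $(1-n^{-c})2^{\alpha n}$ cannot be obtained by averaging over all $T$: the proof must \emph{select} $T$ as a function of $\mathcal{F}$, typically by forcing $T$ to avoid a small set of exceptional coordinates, which is exactly where the influence machinery of Kahn--Kalai--Linial enters. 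Your Step 4 has an independent problem as well: a degree-$O(\log n)$ polynomial that is bounded in $[0,1]$ need not concentrate within $n^{-\Omega(1)}$ of its mean outside a set of measure $n^{-\Omega(1)}$ --- for a Hamming ball $\mathcal{F}=\{F:|F|\leqslant n/2\}$ the values $h_T(y)$ spread over essentially all of $(0,1)$ --- so the proposed Chernoff-type bound for the low-degree part is not available. More structurally, any argument resting on $\mathbb{E}[h_T]$ and $\mathbb{E}[h_T^2]$ alone cannot distinguish a variable $h_T$ with an atom at $0$ from one spread thinly over $(0,1)$, so second-moment information caps you at a constant defect no matter how the Fourier expansion is truncated.
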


To pursue the upper bound, it can be argued that among families $\mathcal{F} \subseteq 2^{[n]}$ of size $m$, the extremal family $\mathcal{F}$ satisfying $\Tr(\mathcal{F}, k) = \Tr(n, m, k)$ should exhibit some form of symmetry. 
Thus, it is natural to conjecture that the extremal family includes all sets up to a certain size. 
However, Bollob\'{a}s and Radcliffe~\cite{BR1995} proved the following theorem, demonstrating that this is not the case.

\begin{theorem}[Bollob\'{a}s and Radcliffe~\cite{BR1995}]\label{upper bound}
    For any integer $r\geqslant 2$ and real number $0<\varepsilon <\frac{1}{2}$, there exists an $n_{0}=n_{0}(r,\varepsilon)$, such that for any $n\geqslant n_{0}$, there exists a hereditary family $\mathcal{H}\subseteq 2^{[n]}$ with $|\mathcal{H}|\geqslant \sum_{i=0}^{r}\binom{n}{i}$ and \[\Tr(\mathcal{H},n/2)\leqslant \sum_{i=0}^{r}\binom{n/2}{i}-(1-\varepsilon)2^{-r}\binom{n}{r}. \]
\end{theorem}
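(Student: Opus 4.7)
The plan is a probabilistic construction. Fix a sufficiently large integer constant $k = k(r,\varepsilon)$ chosen so that $(3/4)^{k} \leq \varepsilon \cdot 2^{-r-3}$, and set $t := \lceil 2\binom{n}{r}/2^{k}\rceil$. Sample $U_{1},\ldots,U_{t}$ independently and uniformly at random from $\binom{[n]}{k}$, and define
\[
\mathcal{H} := \binom{[n]}{\leq r-1}\,\cup\,\bigcup_{i=1}^{t}\binom{U_{i}}{\geq r}.
\]
The family $\mathcal{H}$ is hereditary by construction: if $F\subseteq U_{i}$ with $|F|\geq r$ and $F'\subseteq F$, then either $|F'|\leq r-1$ (so $F'\in\binom{[n]}{\leq r-1}$) or $|F'|\geq r$ and $F'\in\binom{U_{i}}{\geq r}$. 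The goal is to show that with positive probability, both $|\mathcal{H}|\geq\sum_{i=0}^{r}\binom{n}{i}$ and $|\mathcal{H}_{|T}|\leq\sum_{i=0}^{r}\binom{n/2}{i}-(1-\varepsilon)2^{-r}\binom{n}{r}$ hold, the latter for every $T\in\binom{[n]}{n/2}$ (with $n$ taken large in terms of $r$ and $\varepsilon$).

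For the trace bound, fix $T$ of size $n/2$. Directly from the construction,
\[
|\mathcal{H}_{|T}| = \sum_{i=0}^{r-1}\binom{n/2}{i} + \Bigl|\bigcup_{i=1}^{t}\binom{U_{i}\cap T}{\geq r}\Bigr| \leq \sum_{i=0}^{r-1}\binom{n/2}{i} + \sum_{i=1}^{t}2^{|U_{i}\cap T|}.
\]
The variables $X_{i} := |U_{i}\cap T|$ are iid hypergeometric, well approximated by $\mathrm{Bin}(k,1/2)$ for $k$ fixed and $n$ large, so $\mathbb{E}[2^{X_{i}}] = (3/2)^{k}(1+o(1))$. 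Hence $\mathbb{E}[\sum_{i}2^{X_{i}}] = 2\binom{n}{r}(3/4)^{k}(1+o(1))$, which by our choice of $k$ is at most $\tfrac{1}{4}\varepsilon \cdot 2^{-r}\binom{n}{r}$. Since $\binom{n/2}{r} = 2^{-r}\binom{n}{r}(1+o(1))$, the target value $\binom{n/2}{r}-(1-\varepsilon)2^{-r}\binom{n}{r}$ equals $\varepsilon \cdot 2^{-r}\binom{n}{r}(1+o(1))$, leaving ample slack. McDiarmid's inequality, applied with bounded differences $2^{k}$ for each of the $t$ independent $U_{i}$'s, then yields
\[
\Pr\Bigl[\sum_{i=1}^{t}2^{X_{i}} > \tfrac{1}{2}\varepsilon\cdot 2^{-r}\binom{n}{r}\Bigr] \leq \exp\bigl(-\Omega(\tbinom{n}{r}/2^{k})\bigr) = \exp(-\Omega(n^{r})).
\]
Since $r\geq 2$, a union bound over the $\binom{n}{n/2}\leq 2^{n}$ choices of $T$ still gives failure probability $o(1)$.

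For the size bound, it suffices to show $|\mathcal{J}|\geq\binom{n}{r}$ for $\mathcal{J} := \bigcup_{i}\binom{U_{i}}{\geq r}$. By linearity,
\[
\mathbb{E}[|\mathcal{J}|] = \sum_{s\geq r}\binom{n}{s}\Bigl(1 - \bigl(1 - \binom{k}{s}/\binom{n}{s}\bigr)^{t}\Bigr),
\]
and for $s>r$ one has $t\binom{k}{s}/\binom{n}{s}\to 0$ as $n\to\infty$, so $1-(1-x)^{t}\sim tx$ makes the $s$-th term $\sim t\binom{k}{s}$. Summing over $s>r$ yields $\mathbb{E}[|\mathcal{J}|]\geq 2\binom{n}{r}\Pr[Y>r]\cdot(1+o(1))$ for $Y\sim\mathrm{Bin}(k,1/2)$. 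Choosing $k\geq 2r$ makes $\Pr[Y>r]\geq 1/2$, so $\mathbb{E}[|\mathcal{J}|]\geq\binom{n}{r}(1+\Omega(1))$; McDiarmid's inequality (each $U_i$ changes $|\mathcal{J}|$ by at most $2^{k}$) then gives $|\mathcal{J}|\geq\binom{n}{r}$ with high probability. Combining the two high-probability events shows that a valid $\mathcal{H}$ exists.

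The main obstacle is balancing the two competing requirements. Each random block $U_{i}$ contributes about $2^{k}$ sets to $|\mathcal{J}|$ but only about $(3/2)^{k}\ll 2^{k}$ sets (in expectation) to the trace on a typical $T$, because $|U_{i}\cap T|$ concentrates near $k/2$. This per-block savings factor $(3/4)^{k}$, coming from the identity $\mathbb{E}[2^{\mathrm{Bin}(k,1/2)}] = (3/2)^{k}$, is the essence of the improvement over $\binom{[n]}{\leq r}$: once $k$ is sufficiently large in terms of $r$ and $\log(1/\varepsilon)$, the savings exceed the target ratio $2^{r}/\varepsilon$, so a trace reduction of $(1-\varepsilon)2^{-r}\binom{n}{r}$ is affordable while $|\mathcal{J}|\geq\binom{n}{r}$ is maintained. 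The remaining subtlety is ensuring the concentration is sharp enough (exponential in $n^{r}$) to survive the $2^{n}$-sized union bound over $T$, which works precisely because $r\geq 2$.
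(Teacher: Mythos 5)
Your construction and analysis are correct, and since this survey states the theorem only as a quotation from Bollob\'as--Radcliffe without reproducing their argument, I can only assess it on its own merits: the random-blocks family $\binom{[n]}{\leq r-1}\cup\bigcup_i 2^{U_i}$, with the per-block savings factor $\mathbb{E}\bigl[2^{\operatorname{Bin}(k,1/2)}\bigr]/2^{k}=(3/4)^{k}$ driving the trace reduction and McDiarmid plus a union bound over the $\leq 2^{n}$ half-sets (valid precisely because $r\geq 2$ makes the tail $\exp(-\Omega(n^{r}))$), is exactly the mechanism this result rests on. The only blemish is quantitative and harmless: $k\geq 2r$ does not quite give $\Pr[\operatorname{Bin}(k,1/2)>r]\geq 1/2$ (at $k=2r$ that probability is $\tfrac{1}{2}\bigl(1-\binom{2r}{r}2^{-2r}\bigr)<\tfrac12$), but taking $k\geq 2r+2$, or simply noting that $k$ is already forced to be large by the condition $(3/4)^{k}\leq\varepsilon 2^{-r-3}$ so that $\Pr[\operatorname{Bin}(k,1/2)>r]$ is close to $1$, repairs this and even gives the $\Omega(1)$ slack you need for the concentration step on $|\mathcal{J}|$.
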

As a corollary of Theorem~\ref{upper bound}, we can derive $\Tr(n,\sum_{i=0}^{r}\binom{n}{i},\frac{n}{2})=o(n^{r})$ for any integer $r\geqslant 2$.
Moreover, if we set $r=2$ and $\alpha =1/2$, then Theorems~\ref{alpha n} and \ref{upper bound} bound $\Tr(n,n^{2},n/2)$ as follows:
\[
\Omega(n^{1.1699\dots})=\Omega(n^{2\log_{2}\frac{3}{2}})\leqslant \Tr(n,n^{2},n/2)\leqslant o(n^{2}).
\]
Throughout this example, the gap between the lower and upper bounds is evident.
Recently, Alon, Moshkovitz and Solomon~\cite{AMS2019} determined the value of $\Tr(n,n^{r},\alpha n)$ up to logarithmic factors whenever $r$ and $\alpha$ are constants.
They introduced a new version of the Kruskal-Katona Theorem, referred to as the Sparse Kruskal-Katona Theorem, and derived the following theorem from it.
\begin{theorem}[Alon, Moshkovitz and Solomon~\cite{AMS2019}]\label{alpha n value}
    For any integer $r\geqslant 1$ and real number $\alpha \in (0,1]$, if $r,\alpha^{-1}\leqslant n^{o(1)}$ then $\Tr(n,n^{r},\alpha n)=n^{\mu(1-o(1))}$ where
\[
\mu=\mu(r,\alpha)=\frac{r+1-\log(1+\alpha)}{2-\log(1+\alpha)}.
\]
Moreover, if $r=O(1)$ and $\alpha^{-1}\leqslant (\log(n))^{O(1)}$, then $\Tr(n,n^{r},\alpha n)=\tilde{\Theta}(n^{\mu})$.\footnote{Here, all logarithms are in base $2$, and we use the following standard notation: for two functions $f(n)$ and $g(n)$, we write $f=\tilde{\Theta}(g)$ if $f=\Theta(g\log^{c}(g))$ for some absolute constant $c>0$.}
\end{theorem}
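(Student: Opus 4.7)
The plan is to prove the matching lower and upper bounds of $\Tr(n,n^r,\alpha n)=n^{\mu(1-o(1))}$ separately, with a Sparse Kruskal--Katona theorem (developed by the authors for this purpose) as the central new ingredient.

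For the lower bound, the first step is to invoke Lemma~\ref{1.1} to reduce to hereditary families $\mathcal{F}\subseteq 2^{[n]}$ of size $n^r$. For such $\mathcal{F}$, hereditariness yields $|\mathcal{F}_{\mid T}|=|\{G\in\mathcal{F}:G\subseteq T\}|$, the downset of $\mathcal{F}$ inside $T$. A direct application of Katona's Theorem~\ref{katona} with the monotone non-increasing weight $f(k)=\binom{n-k}{\alpha n-k}/\binom{n}{\alpha n}$, comparing $\mathcal{F}$ against the colex initial segment $\mathcal{R}(n^r)=2^{[r\log n]}$, yields only the averaged bound
\[
\mathbb{E}_{T\in\binom{[n]}{\alpha n}}|\mathcal{F}_{\mid T}|\geq (1+\alpha)^{r\log n}=n^{rL},\qquad L:=\log(1+\alpha),
\]
and hence $\max_T|\mathcal{F}_{\mid T}|\geq n^{rL}$. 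Since $rL<\mu$ whenever $\alpha<1$, a stronger tool is needed: the Sparse Kruskal--Katona theorem, which guarantees a specific $T$ with $|\mathcal{F}_{\mid T}|\geq n^{\mu-o(1)}$. A natural dichotomy drives this: if $\mathcal{F}$ contains a set $A$ of size $\geq\mu\log n$, then hereditariness produces the cube $2^A\subseteq\mathcal{F}$, and any $T\supseteq A$ (which fits since $\mu\log n\ll\alpha n$) witnesses $|\mathcal{F}_{\mid T}|\geq n^\mu$; otherwise every set of $\mathcal{F}$ has size $<\mu\log n$, and one selects $T$ adaptively (e.g.\ by a greedy degree-based procedure) to extract a polynomial-factor gain over the Katona average.

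For the upper bound, one constructs an explicit hereditary family of size $n^r$ with $\max_T|\mathcal{F}_{\mid T}|\leq n^{\mu+o(1)}$. Rewriting $\mu=(r+\gamma)/(1+\gamma)$ with $\gamma=\log(2/(1+\alpha))$ shows that $\mu$ interpolates between $r$ (at $\alpha=1$) and $(r+1)/2$ (as $\alpha\to 0$), which suggests a two-scale construction combining cube-like ``dense'' pieces with uniform ``spread'' pieces at carefully chosen block-size and layer parameters. A plausible candidate is $\mathcal{F}=\bigcup_i\binom{B_i}{\leq k}$ over disjoint blocks $B_i\subseteq[n]$, with $k$ and the block size tuned so that $|\mathcal{F}|=n^r$; then the trace on any $T$ is governed by the convex optimization $\max\sum_i\binom{|B_i\cap T|}{\leq k}$ subject to $\sum_i|B_i\cap T|\leq\alpha n$, and the parameters must be chosen so that this maximum is $n^{\mu+o(1)}$.

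The main obstacle is the Sparse Kruskal--Katona theorem itself. Katona's classical inequality is tight only against $\mathcal{R}(n^r)=2^{[r\log n]}$, a highly atypical hereditary family (a single cube) whose average trace $n^{rL}$ is exponentially smaller than its maximum trace $n^r$; for generic $\mathcal{F}$ this gap between average and maximum must be leveraged. The dichotomy above handles the two extremes, but a finely calibrated argument is needed in the intermediate regime where $\mathcal{F}$ contains moderate-size cubes but none reaching the threshold $\mu\log n$. Carrying out this calibration, presumably via an iterated shadow comparison in the spirit of the classical Kruskal--Katona proof but refined for the adaptive choice of $T$, is the technical heart of the argument, and it is what determines whether the exponent can be pushed all the way to $\mu=(r+1-L)/(2-L)$ rather than some weaker intermediate value.
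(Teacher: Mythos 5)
First, a point of reference: the survey does not prove this theorem; it cites \cite{AMS2019} and records only that the result is derived from a ``Sparse Kruskal--Katona Theorem.'' Your outline correctly identifies that tool as the crux, and your preliminary computations are sound (the reduction to hereditary families via Lemma~\ref{1.1}, the identity $\mathcal{F}_{\mid T}=\{G\in\mathcal{F}:G\subseteq T\}$ for hereditary $\mathcal{F}$, the Katona averaging giving $n^{rL}$ with $L=\log(1+\alpha)$, and the verification that $rL<\mu$ for $\alpha<1$). But as a proof there are two genuine gaps. The first is that the entire content of the lower bound lies in the regime you defer: once every member of $\mathcal{F}$ has size below $\mu\log n$, the averaging argument is stuck at $n^{rL}$, and the polynomial gain from $n^{rL}$ up to $n^{\mu-o(1)}$ \emph{is} the Sparse Kruskal--Katona theorem. ``A greedy degree-based procedure'' is not an argument here --- nothing in your sketch explains why the adaptive choice of $T$ beats the average by precisely the exponent $\mu-rL$, nor why $\mu$ is the right threshold in your dichotomy rather than some other value.

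The second gap is that your upper-bound construction provably fails. For $\mathcal{F}=\bigcup_i\binom{B_i}{\leq k}$ over disjoint blocks of size $s\leq\alpha n$, the quantity $\binom{t}{\leq k}$ is convex in $t$, so the optimization you write down is maximized by concentrating $T$ on whole blocks: taking $T$ to be a union of $\alpha n/s$ full blocks gives $|\mathcal{F}_{\mid T}|\approx(\alpha n/s)\binom{s}{\leq k}\approx\alpha|\mathcal{F}|=\alpha n^{r}$, which is $n^{r-o(1)}\gg n^{\mu+o(1)}$ for every fixed $\alpha<1$; if instead $s>\alpha n$ there are $O(1)$ blocks and the family degenerates. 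No tuning of $k$ and $s$ escapes this, because the failure comes from convexity, not from the parameters. The extremal family must be built so that no $\alpha n$-set can ``swallow'' a dense local piece --- the union-of-downsets-of-disjoint-blocks template is exactly the wrong shape for that, and a genuinely different construction (and its analysis) is needed for the upper bound.
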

In particular, by setting $r=2$ and $\alpha =1/2$, Theorem~\ref{alpha n value} implies that
\[
\Tr(n,n^{2},n/2)=\tilde{\Theta}(n^{1+\frac{1}{3-\log_{2}3}})=\tilde{\Theta}(n^{1.706695\dots}).
\]

\section{Extremal Families}\label{6 Extremal families}
Characterizing combinatorial structures that optimize the given parameters is a central problem in extremal combinatorics.
This section focuses on structural results on the largest families with trace constraints.  
Before proceeding, we introduce some basic definitions.

\begin{definition}\label{tr F}
    A family $\mathcal{F}\subseteq 2^{[n]}$ is said to \emph{trace} (or \emph{shatter}) a set $T\subseteq [n]$ if $\mathcal{F}_{|T}=2^{T}$. We denote the collection of traced sets by $tr(\mathcal{F})=\{T: \mathcal{F} \mbox{ traces }T\}$.
\end{definition}

\begin{definition}
    The \emph{Vapnik-Chervonenkis dimension} (or \emph{VC-dimension}) of a family $\mathcal{F}\subseteq 2^{[n]}$, denoted by
    $\VC(\mathcal{F})$, is defined as the maximum cardinality among sets in $tr(\mathcal{F})$, that is
    \[
    \VC(\mathcal{F})=\max_{T\in tr(\mathcal{F})}|T|.
    \]
\end{definition}
With these notions, the Sauer-Shelah lemma can be rewritten as following.
\begin{theorem}[Sauer~\cite{S1972}, Perles and Shelah~\cite{SP1972}, and Vapnik and Chervonenkis~\cite{V1968,VC2015}]\label{Thm:SS}
    For any family $\mathcal{F}\subseteq 2^{[n]}$, the inequality $\VC(\mathcal{F})\geqslant k+1$ holds whenever $|\mathcal{F}|\geqslant 1+\sum_{i=0}^{k}\binom{n}{i}$.
\end{theorem}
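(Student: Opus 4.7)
The plan is to exploit the squashing machinery developed in Section~\ref{sec 2} to reduce to the hereditary case, where the VC-dimension can be read off directly from the largest member of the family. First I would apply the composite squash $S_\sigma$ for an arbitrary permutation $\sigma$ of $[n]$ to obtain a squashed (hence hereditary) family $\tilde{\mathcal{F}}$ with $|\tilde{\mathcal{F}}| = |\mathcal{F}|$ and, by iterating Proposition~\ref{squash trace}, $|\tilde{\mathcal{F}}_{|T}| \leqslant |\mathcal{F}_{|T}|$ for every $T \subseteq [n]$. In particular, any set shattered by $\tilde{\mathcal{F}}$ is automatically shattered by $\mathcal{F}$, since $2^{|T|} = |\tilde{\mathcal{F}}_{|T}| \leqslant |\mathcal{F}_{|T}| \leqslant 2^{|T|}$ forces equality. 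It therefore suffices to exhibit some $F \in tr(\tilde{\mathcal{F}})$ with $|F| \geqslant k+1$.

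Next I would record the key observation that a hereditary family automatically shatters each of its own members: for any $F \in \tilde{\mathcal{F}}$ and any $A \subseteq F$, heredity gives $A \in \tilde{\mathcal{F}}$, and the identity $A \cap F = A$ yields $\tilde{\mathcal{F}}_{|F} = 2^F$. Combined with the previous step, this gives
\[
\VC(\mathcal{F}) \;\geqslant\; \VC(\tilde{\mathcal{F}}) \;\geqslant\; \max_{F \in \tilde{\mathcal{F}}} |F|.
\]

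To finish, I would apply the pigeonhole principle: the collection of subsets of $[n]$ of size at most $k$ has cardinality $\sum_{i=0}^{k}\binom{n}{i}$, while $|\tilde{\mathcal{F}}| = |\mathcal{F}| \geqslant 1 + \sum_{i=0}^{k}\binom{n}{i}$ strictly exceeds this bound, so there must exist some $F \in \tilde{\mathcal{F}}$ with $|F| \geqslant k+1$. The displayed chain of inequalities then delivers $\VC(\mathcal{F}) \geqslant k+1$, as required. There is no genuine obstacle here: the statement is essentially a reformulation of the Sauer-Shelah Lemma (Theorem~\ref{Sauer}) under the index shift $k \mapsto k+1$, and indeed one could simply invoke that theorem directly. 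The only points that need verification are the self-shattering property of hereditary families and the monotonicity of trace size under squashing, both of which are immediate from the definitions and from Proposition~\ref{squash trace}.
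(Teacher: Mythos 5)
Your argument is correct: every step (squashing preserves cardinality, Proposition~\ref{squash trace} guarantees that a set shattered by the squashed family is shattered by the original, hereditary families shatter their own members, and the final pigeonhole count) checks out, and the standing hypothesis $|\mathcal{F}|\geqslant 2$ ensures $tr(\mathcal{F})\neq\emptyset$ so that $\VC(\mathcal{F})$ is well defined. However, your route differs from the paper's, which offers no fresh argument at this point: it presents Theorem~\ref{Thm:SS} simply as a rewriting of the cited Sauer--Shelah Lemma (Theorem~\ref{Sauer}) under the index shift $k\mapsto k+1$, and then observes that it also follows immediately from Pajor's inequality $|\mathcal{F}|\leqslant|tr(\mathcal{F})|$ (Theorem~\ref{Pajor}) together with the same count of the $\sum_{i=0}^{k}\binom{n}{i}$ subsets of size at most $k$. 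Your proof is essentially the classical Frankl--Alon shifting proof of Sauer--Shelah, assembled from the Section~\ref{sec 2} machinery; it is self-contained and makes transparent \emph{where} the shattered large set comes from (a large member of the hereditary reduction), whereas the Pajor route is shorter and strictly stronger, since $|tr(\mathcal{F})|\geqslant|\mathcal{F}|$ gives many shattered sets rather than one. Both arguments terminate in the identical pigeonhole step, so the two approaches are complementary rather than in conflict.
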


The subsequent theorem from Pajor~\cite{P1985} strengthens this result.
\begin{theorem}[Pajor~\cite{P1985}]\label{Pajor}
    For any family $\mathcal{F}\subseteq 2^{[n]}$, the inequality $|\mathcal{F}|\leqslant |tr(\mathcal{F})|$ holds.
\end{theorem}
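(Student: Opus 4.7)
The plan is to deduce Pajor's theorem from the squashing machinery already developed in Section~\ref{sec 2}, reducing the problem to hereditary families where the inequality becomes essentially tautological. Concretely, the strategy has three ingredients: (i) verify the theorem directly for hereditary families, (ii) show that the squash operation $S_v$ can only shrink the collection of shattered sets, and (iii) combine these with the fact that a suitable sequence of squashes turns any $\mathcal{F}$ into a hereditary family of the same size.

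For step (i), I would observe that if $\mathcal{F}$ is hereditary, then in fact $\mathcal{F}\subseteq tr(\mathcal{F})$. Indeed, for any $F\in\mathcal{F}$ and any $S\subseteq F$, hereditariness gives $S\in\mathcal{F}$, and $S\cap F=S$, so $\mathcal{F}_{\mid F}=2^{F}$; that is, $F$ is shattered. This immediately yields $|\mathcal{F}|\leq |tr(\mathcal{F})|$ whenever $\mathcal{F}$ is hereditary.

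For step (ii), I would show that $tr(S_v(\mathcal{F}))\subseteq tr(\mathcal{F})$ for every $v\in[n]$. This follows directly from Proposition~\ref{squash trace}: if $T\in tr(S_v(\mathcal{F}))$, then $|S_v(\mathcal{F})_{\mid T}|=2^{|T|}$, and combining this with the chain
\[
2^{|T|}=|S_{v}(\mathcal{F})_{\mid T}|\leq |\mathcal{F}_{\mid T}|\leq 2^{|T|}
\]
forces $\mathcal{F}_{\mid T}=2^{T}$, i.e., $T\in tr(\mathcal{F})$. Iterating, any squashed family $\tilde{\mathcal{F}}$ obtained from $\mathcal{F}$ satisfies $tr(\tilde{\mathcal{F}})\subseteq tr(\mathcal{F})$.

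For step (iii), recall from Section~\ref{sec 2} that finitely many squashes transform $\mathcal{F}$ into a squashed (hence hereditary) family $\tilde{\mathcal{F}}$ with $|\tilde{\mathcal{F}}|=|\mathcal{F}|$. Then steps (i) and (ii) assemble into
\[
|\mathcal{F}|=|\tilde{\mathcal{F}}|\leq |tr(\tilde{\mathcal{F}})|\leq |tr(\mathcal{F})|,
\]
which is exactly Pajor's bound. The main potential obstacle is verifying the non-expansion of $tr$ under a single squash, but since Proposition~\ref{squash trace} already controls traces on every fixed coordinate set $T$ and ``being shattered'' is the extremal case of that inequality, the containment drops out with no further work. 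As an alternative backup argument (in case one prefers to avoid invoking the squashing framework), I would run an induction on $n$ by splitting $\mathcal{F}$ into $\mathcal{F}_{0}=\{F\in\mathcal{F}:n\notin F\}$ and $\mathcal{F}_{1}=\{F\setminus\{n\}:n\in F\in\mathcal{F}\}$, apply the inductive hypothesis to each, and use the identity $|tr(\mathcal{F}_{0})|+|tr(\mathcal{F}_{1})|=|tr(\mathcal{F}_{0})\cup tr(\mathcal{F}_{1})|+|tr(\mathcal{F}_{0})\cap tr(\mathcal{F}_{1})|$ together with the observation that $T\in tr(\mathcal{F}_0)\cap tr(\mathcal{F}_1)$ implies $T\cup\{n\}\in tr(\mathcal{F})$.
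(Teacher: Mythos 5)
Your proof is correct. The paper states Theorem~\ref{Pajor} without proof (citing Pajor), but your argument assembles exactly the machinery the paper develops in Section~\ref{sec 2}: hereditary families satisfy $\mathcal{F}\subseteq tr(\mathcal{F})$, Proposition~\ref{squash trace} in its extremal case gives $tr(S_v(\mathcal{F}))\subseteq tr(\mathcal{F})$, and squashing preserves cardinality while producing a hereditary family, so the chain $|\mathcal{F}|=|\tilde{\mathcal{F}}|\leq|tr(\tilde{\mathcal{F}})|\leq|tr(\mathcal{F})|$ closes the argument. The backup induction on the ground set is also a standard, valid alternative; nothing further is needed.
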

In other words, a set system $\mathcal{F}$ must trace at least $|\mathcal{F}|$ distinct sets. 
It is evident to observe that Theorem~\ref{Pajor} directly implies Theorem~\ref{Thm:SS}, as there are exactly $\sum_{i=0}^{k} \binom{n}{i}$ subsets of $[n]$ with cardinality at most $k$.
A significant direction of research focuses on the extremal set systems addressed in Theorem~\ref{Thm:SS} and Theorem~\ref{Pajor}. 
Specifically, the former concerns the characterization of maximum families that trace no $(k+1)$-sets, while the latter deals with the characterization of families $\mathcal{F}$ satisfying $|\text{tr}(\mathcal{F})| = |\mathcal{F}|$. 
For the former, Frankl~\cite{F1983} and Dudley~\cite{D1985} characterized families $\mathcal{F} \subseteq 2^{[n]}$ of size $n+1$ with $\VC(\mathcal{F}) = 1$. 
In comparison, the latter has been the subject of more extensive study in the literature.

Similar to Definition \ref{tr F}, Bollob\'{a}s and Radcliffe~\cite{BR1995} proposed the following definition.
\begin{definition}
    We say that a family $\mathcal{F}\subseteq 2^{[n]}$ \emph{strongly traces} a set $T\subseteq [n]$, if there exists a set $S\subseteq [n]$ disjoint from $T$ such that $\mathcal{F}\supseteq \{A\cup S: A\in 2^{T}\}$. The set $S$ is called a \emph{support} of $T$. The family of all sets strongly traced by $\mathcal{F}$ is denoted by $str(\mathcal{F})$.
\end{definition}
By definition, it is easy to see that $str(\mathcal{F})\subseteq tr(\mathcal{F})$ for any family $\mathcal{F}\subseteq 2^{[n]}$. Bollob\'{a}s, Leader, and Radcliffe~\cite{BLR1989} proved the following theorem, which turns out to be equivalent to Theorem~\ref{Pajor}.
\begin{theorem}[Bollob\'{a}s, Leader and Radcliffe~\cite{BLR1989}]
    Any family $\mathcal{F}\subseteq 2^{[n]}$ satisfies $|\mathcal{F}|\geqslant |str(\mathcal{F})|$.
\end{theorem}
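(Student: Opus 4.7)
The plan is to prove the equivalent inequality $|str(\mathcal{F})| \leqslant |\mathcal{F}|$ by induction on $n$. The base case $n=0$ is immediate: $\mathcal{F}$ is either $\emptyset$ or $\{\emptyset\}$, and both satisfy $|str(\mathcal{F})| \leqslant |\mathcal{F}|$.

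For the inductive step, I would split $\mathcal{F}$ based on the status of the element $n$, setting $\mathcal{F}_0 = \{F\in\mathcal{F} : n\notin F\}$ and $\mathcal{F}_1 = \{F\setminus\{n\} : F\in\mathcal{F},\ n\in F\}$, both regarded as families on $[n-1]$, so that $|\mathcal{F}| = |\mathcal{F}_0| + |\mathcal{F}_1|$. The heart of the argument is a direct case analysis characterizing $str(\mathcal{F})$ in terms of $str(\mathcal{F}_0)$, $str(\mathcal{F}_1)$ and $str(\mathcal{F}_0\cap\mathcal{F}_1)$. For $T\subseteq[n-1]$, any support $S$ witnessing $T\in str(\mathcal{F})$ either excludes $n$ (which places $T$ in $str(\mathcal{F}_0)$) or contains $n$, in which case $S\setminus\{n\}$ witnesses $T\in str(\mathcal{F}_1)$. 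For $T$ with $n\in T$, writing $T = T'\cup\{n\}$, the support $S$ must avoid $n$, and unpacking $\{A\cup S : A\subseteq T\}\subseteq \mathcal{F}$ forces $A'\cup S\in \mathcal{F}_0\cap \mathcal{F}_1$ for every $A'\subseteq T'$, which is exactly $T'\in str(\mathcal{F}_0\cap\mathcal{F}_1)$. Collecting these contributions yields
\[
|str(\mathcal{F})| = |str(\mathcal{F}_0)\cup str(\mathcal{F}_1)| + |str(\mathcal{F}_0\cap\mathcal{F}_1)|.
\]

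The closing step uses the easy inclusion $str(\mathcal{F}_0\cap\mathcal{F}_1)\subseteq str(\mathcal{F}_0)\cap str(\mathcal{F}_1)$, which follows because any support that witnesses strong tracing by the intersection family automatically witnesses it for each factor. Combining this with the identity $|A|+|B| = |A\cup B|+|A\cap B|$ gives
\[
|str(\mathcal{F})| \leqslant |str(\mathcal{F}_0)\cup str(\mathcal{F}_1)| + |str(\mathcal{F}_0)\cap str(\mathcal{F}_1)| = |str(\mathcal{F}_0)| + |str(\mathcal{F}_1)|.
\]
Applying the induction hypothesis to $\mathcal{F}_0$ and $\mathcal{F}_1$ then gives $|str(\mathcal{F})| \leqslant |\mathcal{F}_0|+|\mathcal{F}_1| = |\mathcal{F}|$, completing the induction.

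The main obstacle is the bookkeeping in the case analysis of how $str(\mathcal{F})$ decomposes, especially the observation that the sets $T$ with $n\in T$ are controlled by the \emph{intersection} $\mathcal{F}_0\cap\mathcal{F}_1$ rather than the union. Once this is in place, the proof is essentially mechanical. The argument closely parallels the standard inductive proof of Pajor's theorem (\Cref{Pajor}), with the role of $tr$ replaced by $str$ and the inclusion $str(\mathcal{F}_0\cap\mathcal{F}_1)\subseteq str(\mathcal{F}_0)\cap str(\mathcal{F}_1)$ replacing the corresponding containment for traced sets; this parallelism is the reason the two theorems are equivalent, as noted in the excerpt.
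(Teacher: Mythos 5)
Your argument is correct: the decomposition $\mathcal{F}=\mathcal{F}_0\sqcup\mathcal{F}_1$ at the element $n$, the identity $|str(\mathcal{F})| = |str(\mathcal{F}_0)\cup str(\mathcal{F}_1)| + |str(\mathcal{F}_0\cap\mathcal{F}_1)|$ (the sets $T\not\ni n$ account for the union, via supports containing or avoiding $n$; the sets $T\ni n$ are in bijection with $str(\mathcal{F}_0\cap\mathcal{F}_1)$), the monotonicity inclusion $str(\mathcal{F}_0\cap\mathcal{F}_1)\subseteq str(\mathcal{F}_0)\cap str(\mathcal{F}_1)$, and the inclusion--exclusion step all check out, and the induction closes. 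Note, however, that the survey does not prove this theorem at all: it cites Bollob\'as--Leader--Radcliffe and instead emphasizes that the statement is \emph{equivalent} to Pajor's inequality (Theorem~\ref{Pajor}) via the complementation identity of Theorem~\ref{tr=str}, namely $2^{[n]}\setminus tr(\mathcal{F})=\{T^{c}: T\in str(2^{[n]}\setminus \mathcal{F})\}$. So your route is genuinely different from the one the paper gestures at: you give a direct, self-contained induction on the ground set that mirrors the standard proof of Pajor's theorem (with the roles of union and intersection of $\mathcal{F}_0,\mathcal{F}_1$ swapped relative to that proof, since for $str$ the element-$n$ sets are governed by $\mathcal{F}_0\cap\mathcal{F}_1$ rather than by both halves separately). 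The complementation route buys a one-line deduction once Pajor's theorem and Theorem~\ref{tr=str} are in hand; your induction buys independence from both. Either is acceptable, and your exposition correctly identifies the one genuinely non-mechanical point (the intersection family controlling the sets containing $n$).
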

This theorem states that a family $\mathcal{F}$ can strongly trace at most $|\mathcal{F}|$ sets. 
For this reason, it is often referred to as the {\it reverse Sauer inequality}. 
In fact, there is a certain symmetry between $\text{tr}(\mathcal{F})$ and $\text{str}(\mathcal{F})$. Specifically, the sets that $\mathcal{F}$ fails to trace are precisely the complements of the sets that $2^{[n]} \setminus \mathcal{F}$ strongly traces. To formalize this observation, we state it as a theorem.
\begin{theorem}[see \cite{BR1995}]\label{tr=str}
    For any family $\mathcal{F}\subseteq 2^{[n]}$, we have
    \[
    2^{[n]}\setminus tr(\mathcal{F})=\{T^{c}: T\in str(2^{[n]}\setminus \mathcal{F})\}.
    \]
\end{theorem}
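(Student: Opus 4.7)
The plan is to prove the set equality by establishing, for every $T\subseteq [n]$, the equivalence
\[
T\notin tr(\mathcal{F}) \iff T^c\in str(2^{[n]}\setminus \mathcal{F}).
\]
The whole argument will be a chain of reformulations in which the subset $A\subseteq T$ witnessing non-trace on the left becomes the support $S\subseteq T$ witnessing strong trace on the right; the two objects are, in effect, the same.

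First I would unpack the left-hand side: $T\notin tr(\mathcal{F})$ is equivalent to $\mathcal{F}_{|T}\neq 2^T$, which in turn says that there exists some $A\subseteq T$ such that no $F\in\mathcal{F}$ satisfies $F\cap T=A$. Next I would unpack the right-hand side: $T^c\in str(2^{[n]}\setminus \mathcal{F})$ asserts that there is a support $S\subseteq [n]$ disjoint from $T^c$ (equivalently, $S\subseteq T$) with $\{B\cup S : B\subseteq T^c\}\subseteq 2^{[n]}\setminus \mathcal{F}$.

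The crucial observation bridging the two conditions is the identity
\[
\{B\cup S : B\subseteq T^c\}=\{F\subseteq [n] : F\cap T=S\},
\]
valid whenever $S\subseteq T$, since every $F\subseteq [n]$ decomposes uniquely as $(F\cap T)\cup (F\cap T^c)$. Consequently, the right-hand condition becomes the statement that there exists $S\subseteq T$ such that no $F\in\mathcal{F}$ has $F\cap T=S$, which is precisely the left-hand condition with $S$ playing the role of $A$. Both directions of the equivalence therefore hold simultaneously, and reading the result back in terms of complements yields the stated set equality.

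I do not anticipate any serious obstacle; the argument is essentially a definition chase. The only points requiring care are the complementation bookkeeping (the support $S$ must live in $T$, not $T^c$) and the bijective identification of $\{B\cup S : B\subseteq T^c\}$ with $\{F\subseteq [n] : F\cap T=S\}$. Once these are in place, the equivalence is immediate.
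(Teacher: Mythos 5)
Your proof is correct and is exactly the intended argument: the paper states this result as a direct formalization of the observation that the sets $\mathcal{F}$ fails to trace are the complements of the sets strongly traced by $2^{[n]}\setminus\mathcal{F}$, and gives no further proof. Your definition chase, including the key identification of $\{B\cup S : B\subseteq T^{c}\}$ with $\{F\subseteq [n] : F\cap T=S\}$ for $S\subseteq T$, fills in precisely the bookkeeping that observation relies on.
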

The advantage of Theorem~\ref{tr=str} is that it allows us to apply the various criteria from \cite{BLR1989} for the condition $|\text{tr}(\mathcal{F})| = |\mathcal{F}|$ to hold, as will be discussed later.
Although Theorem~\ref{tr=str} shows that $|tr(\mathcal{F})|\geqslant |\mathcal{F}|$ and $|str(\mathcal{F})|\leqslant |\mathcal{F}|$ are equivalent, the equivalence between $|tr(\mathcal{F})|=|\mathcal{F}|$ and $|str(\mathcal{F})|=|\mathcal{F}|$ is not trivial.
Recall that by applying the squash operation once for every $i\in [n]$, we can get a hereditary family $S_{\sigma}(\mathcal{F})$, where $\sigma$ is a permutation of $[n]$ and $S_{\sigma}(\mathcal{F})=S_{\sigma(n)}(S_{\sigma(n-1)}(\dots (S_{\sigma(1)}(\mathcal{F}))\dots))$. 
In general, $S_{\sigma}(\mathcal{F})$ is not identical for every permutation $\sigma$. 
The following result shows that the conditions for equality in Sauer's inequality and the reverse Sauer inequality are equivalent.
\begin{theorem}[Bollob\'{a}s and Radcliffe~\cite{BR1995}]\label{tr equ str}
    For any family $\mathcal{F}\subseteq 2^{[n]}$, the following four properties are equivalent:
\begin{itemize}
     \item[(1)] $|tr(\mathcal{F})|=|\mathcal{F}|$.
     \item[(2)] $|str(2^{[n]}\setminus \mathcal{F})|=2^{n}-|\mathcal{F}|$.
     \item[(3)] There is a unique hereditary family $\tilde{\mathcal{F}}$ that can be achieved from $\mathcal{F}$ by squash operations. In other words, $S_{\sigma}(\mathcal{F})=S_{\pi}(\mathcal{F})$ for any permutations $\sigma, \pi$ of $[n]$.
     \item[(4)] $|str(\mathcal{F})|=|\mathcal{F}|$.
\end{itemize}
\end{theorem}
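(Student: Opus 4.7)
The plan is to establish all four equivalences by combining Theorem \ref{tr=str} with a detailed analysis of how the squash operation $S_v$ affects the set-valued invariants $tr(\mathcal{F})$ and $str(\mathcal{F})$. The equivalence (1) $\iff$ (2) is immediate: taking cardinalities in the identity of Theorem \ref{tr=str} gives $|tr(\mathcal{F})| + |str(2^{[n]}\setminus \mathcal{F})| = 2^n$, so $|tr(\mathcal{F})| = |\mathcal{F}|$ exactly when $|str(2^{[n]}\setminus \mathcal{F})| = 2^n - |\mathcal{F}|$. This observation also reveals the ``hidden'' symmetry of the theorem: applying the same identity to the complement shows that (4) for $\mathcal{F}$ is just (1) applied to $2^{[n]}\setminus \mathcal{F}$, namely $|tr(2^{[n]}\setminus \mathcal{F})| = |2^{[n]}\setminus \mathcal{F}|$.

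Next I would supplement Proposition \ref{squash trace} with its dual for $str$: a case analysis on whether $v \in T$, $v \in S$, or $v \notin T \cup S$ shows that whenever $\mathcal{F}$ strongly traces $T$ with support $S$, the family $S_v(\mathcal{F})$ also strongly traces $T$---with support $S$ when $v \notin S$ and with $S \setminus \{v\}$ when $v \in S$---so $str(\mathcal{F}) \subseteq str(S_v(\mathcal{F}))$. Since any hereditary $\tilde{\mathcal{F}}$ satisfies $tr(\tilde{\mathcal{F}}) = str(\tilde{\mathcal{F}}) = \tilde{\mathcal{F}}$, iterating both monotonicity statements produces the chain
\[
|tr(\mathcal{F})| \geq |tr(\tilde{\mathcal{F}})| = |\mathcal{F}| = |str(\tilde{\mathcal{F}})| \geq |str(\mathcal{F})|,
\]
in which (1) and (4) are exactly the statements that the left and right inequalities are tight. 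The forward implications (1) $\Rightarrow$ (3) and (4) $\Rightarrow$ (3) then follow cleanly: if $|tr(\mathcal{F})| = |\mathcal{F}|$, the chain $|tr(\mathcal{F})| \geq |tr(S_v(\mathcal{F}))| \geq |S_v(\mathcal{F})| = |\mathcal{F}|$ collapses to equalities at every squash step, forcing $tr(S_v(\mathcal{F})) = tr(\mathcal{F})$ as sets throughout; iterating gives $\tilde{\mathcal{F}} = tr(\tilde{\mathcal{F}}) = tr(\mathcal{F})$, which is an invariant of $\mathcal{F}$ independent of $\sigma$, so (3) holds. The same argument with $str$ in place of $tr$ yields (4) $\Rightarrow$ (3), with $\tilde{\mathcal{F}} = str(\mathcal{F})$.

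The main obstacle is the reverse implication (3) $\Rightarrow$ (1) (and, in parallel, (3) $\Rightarrow$ (4)). My plan is by contrapositive: assuming $|tr(\mathcal{F})| > |\mathcal{F}|$, I aim to exhibit two permutations $\sigma, \sigma'$ with $S_\sigma(\mathcal{F}) \neq S_{\sigma'}(\mathcal{F})$. Pick $T \in tr(\mathcal{F}) \setminus \tilde{\mathcal{F}}_\sigma$ of minimum cardinality for a fixed $\sigma$. Since $\mathcal{F}$ traces $T$, each $A \subseteq T$ has a witness $F_A \in \mathcal{F}$ with $F_A \cap T = A$; by locating a ``bad pair'' $(A, A\cup\{v\})$ in the proof of Proposition \ref{squash trace}---at which squashing $v \in T$ collapses the trace of $T$---one should be able to construct $\sigma'$ that processes the remaining coordinates in a different order so that $T$ survives as a member of $\tilde{\mathcal{F}}_{\sigma'}$, contradicting (3). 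The difficulty is that a local swap in one ordering can be undone by a later squash in a different ordering, so the discrepancy between $\tilde{\mathcal{F}}_\sigma$ and $\tilde{\mathcal{F}}_{\sigma'}$ must be shown to persist globally; the minimality of $|T|$ is what prevents subsequent squashes from restoring the lost trace, and the argument will likely require an induction on $|T|$ or on $\sum_{F \in \mathcal{F}}|F|$. The implication (3) $\Rightarrow$ (4) can either be proved in exactly the same fashion with strong traces and supports replacing traces, or be deduced by applying the already-established equivalence (1) $\iff$ (3) to the complement $2^{[n]}\setminus \mathcal{F}$ together with the identity $|str(\mathcal{F})| + |tr(2^{[n]}\setminus \mathcal{F})| = 2^n$ from Theorem \ref{tr=str}.
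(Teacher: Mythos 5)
Your reductions are sound as far as they go: the equivalence (1) $\iff$ (2) via Theorem~\ref{tr=str}, the monotonicity statements $tr(S_v(\mathcal{F}))\subseteq tr(\mathcal{F})$ and $str(\mathcal{F})\subseteq str(S_v(\mathcal{F}))$ (your case analysis for $str$, including the mixed subcase when $v$ lies in the support, does work out), the identities $tr(\tilde{\mathcal{F}})=str(\tilde{\mathcal{F}})=\tilde{\mathcal{F}}$ for hereditary families, and hence the implications (1) $\Rightarrow$ (3) and (4) $\Rightarrow$ (3) by the sandwich $str(\mathcal{F})\subseteq S_{\sigma}(\mathcal{F})\subseteq tr(\mathcal{F})$ with $|S_{\sigma}(\mathcal{F})|=|\mathcal{F}|$. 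The genuine gap is (3) $\Rightarrow$ (1), which you only sketch and whose difficulty you yourself flag without resolving; the ``minimum $|T|$ plus bad pair plus new induction'' plan is not the right mechanism. The missing idea is to choose the order of squashes adapted to $T$: given $T\in tr(\mathcal{F})$, take $\sigma'$ that processes all of $[n]\setminus T$ first. Squashing at $v\notin T$ leaves the trace on $T$ unchanged, so the intermediate family $\mathcal{G}$ still shatters $T$; moreover $\mathcal{G}$ is stable under $S_v$ for every $v\notin T$ (the standard stability of shifting, which the paper already invokes when asserting that one pass of squashes suffices), so each $G\in\mathcal{G}$ has $G\cap T\in\mathcal{G}$, whence $2^{T}\subseteq\mathcal{G}$. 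Once $2^{T}$ is contained in the family it survives all remaining squashes at $v\in T$ (each $A\subseteq T$ with $v\in A$ has $A\setminus\{v\}$ already present, so $A$ is fixed). Hence $T\in S_{\sigma'}(\mathcal{F})$, so under (3) every shattered set lies in the common $\tilde{\mathcal{F}}$, giving $tr(\mathcal{F})=\tilde{\mathcal{F}}$ and $|tr(\mathcal{F})|=|\mathcal{F}|$. Note this shows every shattered set survives under \emph{some} ordering, which is exactly what your contrapositive needed and what the local-swap approach struggles to guarantee.

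Your fallback for (3) $\Rightarrow$ (4) by complementation is also not closed as stated: the identity only gives that (4) for $\mathcal{F}$ is (1) for $2^{[n]}\setminus\mathcal{F}$, and nothing you have established transfers property (3) from $\mathcal{F}$ to its complement directly. However, once (3) $\Rightarrow$ (1) is in place the complementation route does work if you chain it correctly: (3) for $\mathcal{F}$ implies (1) for $\mathcal{F}$, which is (2) for $\mathcal{F}$, i.e.\ (4) for $2^{[n]}\setminus\mathcal{F}$; by your implication (4) $\Rightarrow$ (3) applied to the complement this gives (3) for $2^{[n]}\setminus\mathcal{F}$, hence (1) for $2^{[n]}\setminus\mathcal{F}$, hence (2) for $2^{[n]}\setminus\mathcal{F}$, which is exactly (4) for $\mathcal{F}$. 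Spelling out this chain (or giving the dual direct argument) is needed to complete the four-way equivalence.
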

Fix sets $J\subseteq I\subseteq [n]$. Following \cite{BLR1989}, we call a subfamily of the form $\mathcal{C}=\{F\in \mathcal{F}:F\cap I=J\}$ a \emph{chunk} of $\mathcal{F}$. A chunk $\mathcal{C}$ is \emph{self-complementary} if $\{C\bigtriangleup I^{c}:C\in \mathcal{C}\}=\mathcal{C}$. In other words, having fixed the behaviour inside $I$, we check and see whether on $I^{c}$ the chunk looks like a self-complementary family. A chunk $\mathcal{C}$ is called \emph{trivial} if it is either empty or of maximal size, that is $\mathcal{C}=\{J\cup F:F\subseteq I^{c}\}$. For any $A,B\subseteq [n]$, we call $\{F\in \mathcal{F}:A\cap B\subseteq F\subseteq A\cup B\}$ the chunk of $\mathcal{F}$ they span. In particular, this is a chunk of $\mathcal{F}$, since
\[
\{F\in \mathcal{F}:A\cap B\subseteq F\subseteq A\cup B\}=\{F\in \mathcal{F}:F\cap (A\bigtriangleup B)^{c}=A\cap B\}.
\]
Let $\mathcal{F}\subseteq 2^{[n]}$ and $I\subseteq [n]$. 
We write $\mathcal{F}(I)=\{F\in \mathcal{F}:F\subseteq I^{c} \mbox{ and } F\cup J\in \mathcal{F} \mbox{ for any } J\subseteq I\}$. Let $G_{\mathcal{F}}$ denote the inclusion graph of $\mathcal{F}\subseteq 2^{[n]}$, that is the graph with vertex set $\mathcal{F}$ and edge set $\{(F,F')\in \mathcal{F}\times \mathcal{F}:|F\bigtriangleup F'|=1\}$. We say that $\mathcal{F}$ is connected if $G_{\mathcal{F}}$ is connected. The following are several criteria given in \cite{BLR1989} for the family $\mathcal{F}$ with $|tr(\mathcal{F})|=|\mathcal{F}|$.
\begin{theorem}[Bollob\'{a}s, Leader and Radcliffe~\cite{BLR1989}]
    Suppose that $\mathcal{F}\subseteq 2^{[n]}$.
    \begin{itemize}
     \item[(1)] If $|tr(\mathcal{F})|=|\mathcal{F}|$, then for any chunk $\mathcal{C}$ of $\mathcal{F}$, $|tr(\mathcal{C})|=|\mathcal{C}|$.
     \item[(2)] $|tr(\mathcal{F})|=|\mathcal{F}|$ if and only if $\mathcal{F}(I)$ is connected for every $I\subseteq [n]$.
     \item[(3)] $|tr(\mathcal{F})|=|\mathcal{F}|$ if and only if every chunk of every $\mathcal{F}(I)$ is connected. In particular, if $I\subseteq [n]$ and $|tr(\mathcal{F})|=|\mathcal{F}|$, then any two elements of $\mathcal{F}(I)$ can be connected in the chunk of $\mathcal{F}(I)$ they span.
\end{itemize}
\end{theorem}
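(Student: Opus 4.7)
The three parts form a natural hierarchy. Since (2) arises from (3) by taking the trivial chunk (with $I_1 = \emptyset$), my plan is to prove (1) first, then (2), and finally deduce (3) by combining them.

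For part (1), I proceed by induction on $n$; the base case $n = 0$ is trivial. Suppose $|tr(\mathcal{F})| = |\mathcal{F}|$ on $[n]$. The standard Pajor splitting argument from the proof of Theorem~\ref{Pajor}, applied at any element $v \in [n]$, forces $|tr(\mathcal{F}_0)| = |\mathcal{F}_0|$ and $|tr(\mathcal{F}_1)| = |\mathcal{F}_1|$, where $\mathcal{F}_0 = \{F \in \mathcal{F}: v \notin F\}$ and $\mathcal{F}_1 = \{F \setminus \{v\}: v \in F \in \mathcal{F}\}$. Given a chunk $\mathcal{C} = \mathcal{C}_{I, J}$ with $I \neq \emptyset$, pick any $v \in I$. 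If $v \in J$, the bijection $F \mapsto F \setminus \{v\}$ identifies $\mathcal{C}$ with the chunk of $\mathcal{F}_1$ indexed by $(I \setminus \{v\}, J \setminus \{v\})$; if $v \notin J$, it identifies $\mathcal{C}$ with the chunk of $\mathcal{F}_0$ indexed by $(I \setminus \{v\}, J)$. In either case the bijection preserves traces (no $T \in tr(\mathcal{C})$ can contain $v$), so the inductive hypothesis applied to $\mathcal{F}_0$ or $\mathcal{F}_1$ on the smaller ground set delivers $|tr(\mathcal{C})| = |\mathcal{C}|$. When $I = \emptyset$, $\mathcal{C} = \mathcal{F}$ and there is nothing to prove.

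For part (2), I induct on $n$ again. Equality $|tr(\mathcal{F})| = |\mathcal{F}|$ is equivalent, via the Pajor split at $v = n$, to the three conditions (i) $|tr(\mathcal{F}_0)| = |\mathcal{F}_0|$, (ii) $|tr(\mathcal{F}_1)| = |\mathcal{F}_1|$, and (iii) $tr(\mathcal{F}_0 \cup \mathcal{F}_1) = tr(\mathcal{F}_0) \cup tr(\mathcal{F}_1)$. A direct unpacking of the definition yields the inductive identities
\begin{align*}
\mathcal{F}(I) &= \mathcal{F}_0(I) \sqcup \{F \cup \{n\} : F \in \mathcal{F}_1(I)\}, \quad \text{if } n \notin I, \\
\mathcal{F}(I) &= \mathcal{F}_0(I \setminus \{n\}) \cap \mathcal{F}_1(I \setminus \{n\}), \quad \text{if } n \in I,
\end{align*}
where in the first case the cross-edges between the two pieces sit in bijection with $\mathcal{F}_0(I) \cap \mathcal{F}_1(I) = \mathcal{F}(I \cup \{n\})$. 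For the forward direction, the inductive hypothesis together with (i) and (ii) makes $\mathcal{F}_0(I)$ and $\mathcal{F}_1(I)$ connected; condition (iii) supplies the bridging elements in the first case and, together with part (1) applied to the chunk of $\mathcal{F}$ spanned by two putatively disconnected elements of the intersection in the second case, yields connectivity of the intersection via a nested induction on $|\mathcal{F}|$. The backward direction proceeds contrapositively: a disconnection of some $\mathcal{F}(I)$ descends via the same identities to a failure of one of (i), (ii), or (iii), forcing $|tr(\mathcal{F})| > |\mathcal{F}|$.

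Part (3) follows from combining parts (1) and (2). If $|tr(\mathcal{F})| = |\mathcal{F}|$, then by (1) every chunk $\mathcal{C}$ of $\mathcal{F}$ satisfies $|tr(\mathcal{C})| = |\mathcal{C}|$, and by (2) applied to $\mathcal{C}$ the family $\mathcal{C}(I)$ is connected for every $I$. A direct check shows that $\mathcal{C}_{I_1, J_1}(I)$ is canonically isomorphic as an inclusion graph to the chunk of $\mathcal{F}(I)$ indexed by $(I_1, J_1)$ when $I_1 \cap I = \emptyset$, and is empty (hence vacuously connected) otherwise; thus every chunk of every $\mathcal{F}(I)$ is connected. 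The converse is immediate, since the trivial chunk ($I_1 = \emptyset$) already recovers (2), which in turn gives $|tr(\mathcal{F})| = |\mathcal{F}|$. The principal obstacle I foresee is the Case A step of part (2), where an intersection of two connected families must be shown connected: the approach via nested induction on $|\mathcal{F}|$ combined with part (1) must carefully handle the degenerate subcase in which the chunk spanned by the two problematic elements coincides with $\mathcal{F}$ itself, where the inductive descent no longer strictly reduces the family size.
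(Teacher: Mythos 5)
This survey states the theorem as a quoted result of \cite{BLR1989} and gives no proof of it, so your argument can only be judged on its own terms. Much of your architecture is right: the Pajor split at a vertex $v$ does force $|tr(\mathcal{F}_0)|=|\mathcal{F}_0|$ and $|tr(\mathcal{F}_1)|=|\mathcal{F}_1|$, a chunk with $I\neq\emptyset$ does descend (with its trace family unchanged) to a chunk of $\mathcal{F}_0$ or $\mathcal{F}_1$, so part (1) is sound; the recursive identities you write for $\mathcal{F}(I)$ under the split at $n$ are correct, including the observation that the cross-edges correspond to $\mathcal{F}_0(I)\cap\mathcal{F}_1(I)=\mathcal{F}(I\cup\{n\})$; and the reduction of (3) to (1) and (2) via the identification of $\mathcal{C}_{I_1,J_1}(I)$ with a chunk of $\mathcal{F}(I)$ checks out.

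The gap is in part (2), and it sits exactly where the content of the theorem lies. First, the claim that condition (iii), $tr(\mathcal{F}_0\cup\mathcal{F}_1)=tr(\mathcal{F}_0)\cup tr(\mathcal{F}_1)$, ``supplies the bridging elements'' is unsubstantiated. What you need is that $\mathcal{F}_0(I)\neq\emptyset$ and $\mathcal{F}_1(I)\neq\emptyset$ force $\mathcal{F}_0(I)\cap\mathcal{F}_1(I)\neq\emptyset$; since $\mathcal{F}(I)\neq\emptyset$ is equivalent to $I\in str(\mathcal{F})$, this is a statement about strong traces, whereas (iii) concerns ordinary traces, and you give no mechanism converting one into the other. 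The natural source of the bridges is the equality analysis in the recursion $|str(\mathcal{F})|=|str(\mathcal{F}_0)\cup str(\mathcal{F}_1)|+|\{I:\mathcal{F}_0(I)\cap\mathcal{F}_1(I)\neq\emptyset\}|\leqslant|\mathcal{F}_0|+|\mathcal{F}_1|$, but to use it one must first pass from $|tr(\mathcal{F})|=|\mathcal{F}|$ to $|str(\mathcal{F})|=|\mathcal{F}|$, which is Theorem~\ref{tr equ str} --- a nontrivial equivalence you never invoke. Second, and more seriously, the case $n\in I$, where $\mathcal{F}(I)=\mathcal{F}_0(I\setminus\{n\})\cap\mathcal{F}_1(I\setminus\{n\})$ must be shown connected even though an intersection of two connected families need not be, is left open: you propose a nested induction on $|\mathcal{F}|$ through part (1) and then concede yourself that it degenerates when the spanned chunk is all of $\mathcal{F}$. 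This is precisely why the statement one carries through the induction is the stronger local form recorded in (3) --- any two members of $\mathcal{F}(I)$ are joined \emph{inside the chunk of $\mathcal{F}(I)$ they span} --- which is stable under the splitting, whereas plain connectivity is not. The backward direction of (2) inherits the same two defects. As it stands, the proposal establishes (1) but not (2) or (3).
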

Furthermore, Bollob\'{a}s and Radcliffe \cite{BR1995} proved the following theorem.
\begin{theorem}[Bollob\'{a}s and Radcliffe~\cite{BR1995}]
$|tr(\mathcal{F})|=|\mathcal{F}|$ if and only if $\mathcal{F}$ contains no non-trivial self-complementary chunks.
\end{theorem}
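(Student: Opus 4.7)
The plan is to transport the problem to families closed under complementation inside a Boolean lattice. For any chunk $\mathcal{C}=\{F\in\mathcal{F}:F\cap I=J\}$, the associated family $\mathcal{G}_{\mathcal{C}}:=\{G\subseteq I^{c}:J\cup G\in\mathcal{F}\}\subseteq 2^{I^{c}}$ satisfies $|\mathcal{G}_{\mathcal{C}}|=|\mathcal{C}|$ and, since all elements of $\mathcal{C}$ agree on $I$ (so any set traced by $\mathcal{C}$ is forced to lie in $I^{c}$), also $|tr(\mathcal{G}_{\mathcal{C}})|=|tr(\mathcal{C})|$. Under this dictionary, $\mathcal{C}$ is self-complementary iff $\mathcal{G}_{\mathcal{C}}$ is closed under the involution $G\mapsto I^{c}\setminus G$, and $\mathcal{C}$ is trivial iff $\mathcal{G}_{\mathcal{C}}\in\{\emptyset,2^{I^{c}}\}$.

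For the forward direction, I would assume $|tr(\mathcal{F})|=|\mathcal{F}|$ and suppose toward contradiction that $\mathcal{F}$ has a non-trivial self-complementary chunk $\mathcal{C}$. Part~(1) of the preceding theorem gives $|tr(\mathcal{C})|=|\mathcal{C}|$, hence $|tr(\mathcal{G}_{\mathcal{C}})|=|\mathcal{G}_{\mathcal{C}}|$ for a non-trivial $\mathcal{G}_{\mathcal{C}}$ closed under complementation. The contradiction follows from the lemma: \emph{if $\mathcal{G}\subseteq 2^{[k]}$ is closed under complementation and $\emptyset\neq\mathcal{G}\neq 2^{[k]}$, then $|tr(\mathcal{G})|>|\mathcal{G}|$.} I would prove this by induction on $k$. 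Split $\mathcal{G}$ on membership of $k$ to obtain $\mathcal{G}_{0},\mathcal{G}_{1}\subseteq 2^{[k-1]}$; complementation provides a bijection $\mathcal{G}_{0}\to\mathcal{G}_{1}$ via $G\mapsto [k-1]\setminus G$, so $|\mathcal{G}_{0}|=|\mathcal{G}_{1}|=|\mathcal{G}|/2$ and $tr(\mathcal{G}_{0})=tr(\mathcal{G}_{1})$ (trace is invariant under complementation). Combined with the standard identity $|tr(\mathcal{G})|=|tr(\mathcal{G}_{0}\cup\mathcal{G}_{1})|+|tr(\mathcal{G}_{0})|$, if $\mathcal{G}_{0}\neq\mathcal{G}_{1}$ then $|\mathcal{G}_{0}\cup\mathcal{G}_{1}|>|\mathcal{G}_{0}|$ and two applications of Pajor's theorem (Theorem~\ref{Pajor}) give $|tr(\mathcal{G})|>|\mathcal{G}|$; otherwise $\mathcal{G}_{0}=\mathcal{G}_{1}$ is itself non-trivial and closed under complementation in $[k-1]$, and induction yields $|tr(\mathcal{G})|=2|tr(\mathcal{G}_{0})|>2|\mathcal{G}_{0}|=|\mathcal{G}|$.

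For the reverse direction, I would invoke part~(2) of the preceding theorem and argue contrapositively: if some $\mathcal{F}(I_{0})$ is disconnected, then $\mathcal{F}$ admits a non-trivial self-complementary chunk. Taking $A,B\in\mathcal{F}(I_{0})$ in different components with $d=|A\triangle B|$ minimum, set $D=A\triangle B$, $X=A\cap B$, $D_{A}=A\setminus B$, $D_{B}=B\setminus A$. A standard minimality argument shows that no element of the open interval $(X,A\cup B)$ lies in $\mathcal{F}(I_{0})$: any such $C$ would form an inter-component pair with $A$ or $B$ of distance strictly less than $d$. The natural candidate chunk is $\mathcal{E}=\{F\in\mathcal{F}:F\cap([n]\setminus D)=X\cup I_{0}\}$; the associated $\mathcal{G}_{\mathcal{E}}\subseteq 2^{D}$ contains the complementary pair $\{D_{A},D_{B}\}$ coming from $A\cup I_{0},B\cup I_{0}\in\mathcal{F}$. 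The hard part, and the principal technical obstacle of the whole proof, is ensuring $\mathcal{G}_{\mathcal{E}}$ is fully closed under complementation in $D$: an intermediate $F_{D}'\notin\{D_{A},D_{B}\}$ with $X\cup I_{0}\cup F_{D}'\in\mathcal{F}$ is not ruled out by the minimality argument alone, because $\mathcal{F}$-membership is strictly weaker than $\mathcal{F}(I_{0})$-membership. My plan is to resolve this by choosing $I_{0}$ maximal such that $\mathcal{F}(I_{0})$ remains disconnected, and if necessary iterating over $J''\subseteq I_{0}$ to find a specific chunk $\{F\in\mathcal{F}:F\cap([n]\setminus D)=X\cup J''\}$ whose corresponding $\mathcal{G}$ is forced, by the maximality of $I_{0}$, to consist exactly of $\{D_{A},D_{B}\}$ (and hence is closed under complementation and non-trivial).
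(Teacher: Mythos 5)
The survey states this Bollob\'as--Radcliffe result without proof, so I can only judge your argument on its own terms. Your forward direction is correct and essentially complete. The dictionary $\mathcal{C}\mapsto\mathcal{G}_{\mathcal{C}}$ does preserve traces (any set shattered by $\mathcal{C}$ must avoid $I$, since elements of $J$ lie in every member of $\mathcal{C}$ and elements of $I\setminus J$ in none), and your key lemma --- a nonempty, proper, complementation-closed $\mathcal{G}\subseteq 2^{[k]}$ has $|tr(\mathcal{G})|>|\mathcal{G}|$ --- is sound: the true identity is $|tr(\mathcal{G})|=|tr(\mathcal{G}_0\cup\mathcal{G}_1)|+|tr(\mathcal{G}_0)\cap tr(\mathcal{G}_1)|$, which reduces to the form you quote because complementation gives $tr(\mathcal{G}_0)=tr(\mathcal{G}_1)$ and $|\mathcal{G}_0|=|\mathcal{G}_1|$; then the case $\mathcal{G}_0\neq\mathcal{G}_1$ follows from Pajor's theorem plus $|\mathcal{G}_0\cup\mathcal{G}_1|>|\mathcal{G}_0|$, and the case $\mathcal{G}_0=\mathcal{G}_1$ from the induction hypothesis. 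Combined with part~(1) of the preceding theorem, that half is done.

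The reverse direction, however, has a genuine gap, and it sits exactly where you flag it. The minimality of $d=|A\triangle B|$ shows only that no $G\subseteq D$ other than $D_A,D_B$ satisfies $X\cup G\in\mathcal{F}(I_0)$; equivalently, writing $\mathcal{G}_{J''}=\{G\subseteq D: X\cup J''\cup G\in\mathcal{F}\}$, it shows $\bigcap_{J''\subseteq I_0}\mathcal{G}_{J''}=\{D_A,D_B\}$. This does finish the proof when $I_0=\emptyset$, but for $I_0\neq\emptyset$ it is entirely consistent with every individual $\mathcal{G}_{J''}$ containing an extra set $G_{J''}$ whose complement $D\setminus G_{J''}$ it lacks --- a different extra set for each $J''$ --- in which case none of your candidate chunks is self-complementary. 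Neither proposed repair closes this. Maximality of $I_0$ controls the connectivity of $\mathcal{F}(I')$ for $I'\supsetneq I_0$, which is a statement about sets $F$ with $F\cup J'\in\mathcal{F}$ for \emph{all} $J'\subseteq I'$ and imposes nothing on membership of $X\cup J''\cup G$ for a single $J''$; and iterating over $J''$ only helps if some $\mathcal{G}_{J''}$ is already forced to equal $\{D_A,D_B\}$, which is precisely what has not been established. To complete the argument you would need a genuinely new mechanism: for instance, an induction on $n$ reducing to the $I_0=\emptyset$ case, or an argument that if every $\mathcal{G}_{J''}$ carries an unmatched excess element then some \emph{other} non-trivial self-complementary chunk of $\mathcal{F}$ (possibly with a different $I$ and $D$) must appear. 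As written, the reverse implication is a plausible plan rather than a proof.
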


We can also find different types of characterization of families with $|tr(\mathcal{F})|=|\mathcal{F}|$ in \cite{MR2012, KM2013, MR2014, KM2020}. 
For instance, in \cite{MR2012}, M\'{e}sz\'{a}ros and R\'{o}nyai investigated families of VC dimension at most $1$ with $|tr(\mathcal{F})|=|\mathcal{F}|$ from the view of the inclusion graphs of $\mathcal{F}$. In \cite{MR2014}, M\'{e}sz\'{a}ros and R\'{o}nyai characterized families of VC dimension $2$ with $|tr(\mathcal{F})|=|\mathcal{F}|$ in terms of their inclusion graphs. In \cite{KM2020}, Kusch and M\'{e}sz\'{a}ros discussed an approach to study the extremal families using Sperner families.

In general, we have $|str(\mathcal{F})|\leqslant |\mathcal{F}|\leqslant |tr(\mathcal{F})|$ for any $\mathcal{F}\subseteq 2^{[n]}$. There is a version of shattering that always results in equality, which is called \emph{order-shattering} and is introduced by Anstee, R\'{o}nyai and Sali~\cite{ARS2002}. They define this concept in an inductive way on the size of $T$. Note that this is just the set version of $C(s)$ given in \cite{AA1995}. For $T=\emptyset$, all we need is a single set from $\mathcal{F}$. For $T=\{t_{1}, t_{2},\dots ,t_{|T|}\}$ with $|T|\geqslant 1$ and $t_{1}<t_{2}<\dots <t_{|T|}$, we say that T is order-shattered by $\mathcal{F}$ if there are $2^{|T|}$ sets from $\mathcal{F}$ divided into two families $\tilde{\mathcal{F}}_{0}$ and $\tilde{\mathcal{F}}_{1}$ so that if we define $T^{\prime}=\{t_{|T|}+1, t_{|T|}+2, \dots , n\}$ (if $t_{|T|}=n$, then $T^{\prime}=\emptyset$) we have that 
\[
\begin{cases}
    T^{\prime}\cap C=T^{\prime}\cap D, \\
    t_{|T|}\notin C, \\
    t_{|T|}\in D
\end{cases}
\]
for all $C\in \tilde{\mathcal{F}}_{0}$, $D\in \tilde{\mathcal{F}}_{1}$ and both $\tilde{\mathcal{F}}_{0}$ and $\tilde{\mathcal{F}}_{1}$ individually order-shatter $T\setminus \{t_{|T|}\}$. We define 
\[
\operatorname{osh}(\mathcal{F})=\{T\subseteq [n]: \mathcal{F} \mbox{ order-shatters } T\}. 
\]
Note that $\operatorname{osh}(\mathcal{F})$ is a hereditary family and $\operatorname{osh}(\mathcal{F})\subseteq tr(\mathcal{F})$. Anstee, R\'{o}nyai and Sali~\cite{ARS2002} proved that $\mathcal{F}$ order-shatters exactly $|\mathcal{F}|$ sets. 
\begin{theorem}[Anstee, R\'{o}nyai and Sali~\cite{ARS2002}]
    Let $\mathcal{F}\subseteq 2^{[n]}$. Then we have 
    \[
    |\operatorname{osh}(\mathcal{F})|=|\mathcal{F}|. 
    \]
\end{theorem}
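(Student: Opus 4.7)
The plan is to proceed by induction on $n$. The base case $n=0$ is immediate, since $\mathcal{F}$ must be either $\emptyset$ or $\{\emptyset\}$, and in both cases the equality holds trivially. For the inductive step, I would split $\mathcal{F}$ according to the element $n$: set
\[
\mathcal{F}_0 = \{F \in \mathcal{F} : n \notin F\} \quad \text{and} \quad \mathcal{F}_1 = \{F \setminus \{n\} : F \in \mathcal{F},\ n \in F\},
\]
both viewed as families on $[n-1]$. Then $|\mathcal{F}| = |\mathcal{F}_0| + |\mathcal{F}_1|$, and by the induction hypothesis, $|\operatorname{osh}(\mathcal{F}_i)| = |\mathcal{F}_i|$ for $i \in \{0,1\}$.

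The heart of the argument is a case analysis on the sets $T \in \operatorname{osh}(\mathcal{F})$ according to whether $n \in T$. The key claim is twofold: (i) if $n \notin T$, then $T$ is order-shattered by $\mathcal{F}$ if and only if $T \in \operatorname{osh}(\mathcal{F}_0) \cup \operatorname{osh}(\mathcal{F}_1)$; and (ii) if $n \in T$, then $T$ is order-shattered by $\mathcal{F}$ if and only if $T \setminus \{n\} \in \operatorname{osh}(\mathcal{F}_0) \cap \operatorname{osh}(\mathcal{F}_1)$. For (i), one notes that $n \in T'$ (except for the trivial case $T = \emptyset$, which is handled directly by $|\mathcal{F}| \geq 1$), so the requirement that all witnessing sets in $\tilde{\mathcal{F}}_0 \cup \tilde{\mathcal{F}}_1$ agree on $T'$ forces them to either uniformly contain $n$ or uniformly avoid $n$; the two scenarios correspond exactly to the witnesses living entirely in $\mathcal{F}_1$ (after stripping $n$) or entirely in $\mathcal{F}_0$. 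For (ii), the condition $t_{|T|} = n$ yields $T' = \emptyset$, hence the witness subfamilies $\tilde{\mathcal{F}}_0 \subseteq \mathcal{F}_0$ and $\tilde{\mathcal{F}}_1 \subseteq \mathcal{F}_1$ split naturally, and each must order-shatter $T \setminus \{n\}$.

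Combining the two cases by inclusion-exclusion then yields
\[
|\operatorname{osh}(\mathcal{F})| = |\operatorname{osh}(\mathcal{F}_0) \cup \operatorname{osh}(\mathcal{F}_1)| + |\operatorname{osh}(\mathcal{F}_0) \cap \operatorname{osh}(\mathcal{F}_1)| = |\operatorname{osh}(\mathcal{F}_0)| + |\operatorname{osh}(\mathcal{F}_1)| = |\mathcal{F}_0| + |\mathcal{F}_1| = |\mathcal{F}|,
\]
completing the induction.

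The main technical subtlety I expect lies in justifying that the notion of order-shattering is unaffected by the ambient ground set when the family has uniform behavior on $n$; more precisely, one must verify that if every set in a subfamily either always contains $n$ or always avoids $n$, then order-shattering of a set $T \subseteq [n-1]$ in the ambient $[n]$ coincides with order-shattering in the ambient $[n-1]$ after the obvious restriction. This holds because the only difference between the two $T'$-sets is the presence of $n$, and in each scenario this extra coordinate is either vacuously agreed upon (no set contains $n$) or automatically agreed upon (every set contains $n$). Once this compatibility is nailed down, the rest of the proof is essentially bookkeeping driven by the inductive definition.
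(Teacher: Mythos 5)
Your proof is correct. Note first that the survey states this theorem without proof, simply citing Anstee, R\'onyai and Sali, so there is no in-paper argument to match; judged on its own terms, your induction goes through. The decomposition into $\mathcal{F}_0$ and $\mathcal{F}_1$, the dichotomy (i)/(ii) according to whether $n\in T$, and the inclusion--exclusion step are all sound, and you correctly isolate the one real subtlety: when $n\notin T$ and $T\neq\emptyset$, the largest element of $T$ is less than $n$, so $n\in T'$ at the top level of the recursion, forcing the $2^{|T|}$ witnesses to agree on the coordinate $n$; this uniformity is inherited by all deeper witness subfamilies (which are subfamilies of the top-level witnesses), so order-shattering relative to the ambient set $[n]$ reduces to order-shattering relative to $[n-1]$ after stripping $n$. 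The case $T=\emptyset$ and the bijectivity of $F\mapsto F\setminus\{n\}$ on $\{F\in\mathcal{F}:n\in F\}$ are the only other points needing mention, and you handle the former. For context, the original argument of Anstee, R\'onyai and Sali is algebraic: they identify the order-shattered sets with the standard monomials (leading-term-free monomials) of the vanishing ideal of $\mathcal{F}$ viewed as a point set in $\{0,1\}^n$, and the count follows from the dimension of the quotient ring. Your combinatorial induction is the natural set-theoretic counterpart, mirroring the classical proof of Pajor's inequality (Theorem 19 of the survey); it is more elementary and self-contained, while the algebraic route explains why order-shattering, unlike ordinary shattering, always yields exact equality.
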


Another intriguing problem is to determine the number of extremal families. 
Let $f(n,k)$ denote the number of families $\mathcal{F}\subseteq 2^{[n]}$ of VC dimension $k$ with $|\mathcal{F}|=\sum_{i=0}^{k}\binom{n}{i}$. Frankl~\cite{F1989} raised the problem of estimating $f(n,k)$ and showed that $2^{\binom{n-1}{k}}\leqslant f(n,k)\leqslant 2^{n\binom{n-1}{k}}$. 
Better upper and lower bounds were obtained by Alon, Moran, and Yehudayoff~\cite{AMY2016}. 
They showed that for any integer $k\geqslant 2$, as $n\to \infty$, we have
\[
n^{(1+o(1))\frac{1}{k+1}\binom{n}{k}}\leqslant f(n,k)\leqslant n^{(1+o(1))\binom{n}{k}}.
\]
Recently, Balogh, M\'{e}sz\'{a}ros and Wagner~\cite{BMW2018} closed the gap and showed that the upper bound is asymptotically tight, even if we allow $k$ to grow as $k=n^{o(1)}$.
\begin{theorem}[Balogh, M\'{e}sz\'{a}ros and Wagner~\cite{BMW2018}]
Let $k=n^{o(1)}$. Then $f(n,k)= n^{(1+o(1))\binom{n}{k}}$ as $n\to \infty$.
\end{theorem}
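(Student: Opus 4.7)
Since the upper bound $f(n,k) \leq n^{(1+o(1))\binom{n}{k}}$ is due to Alon, Moran and Yehudayoff~\cite{AMY2016}, the content of the theorem is the matching lower bound $f(n,k) \geq n^{(1-o(1))\binom{n}{k}}$. My plan is to construct this many extremal families by coordinated local perturbations of the canonical extremal family $\mathcal{F}_0 := \binom{[n]}{\leq k}$, which has size $\sum_{i=0}^{k} \binom{n}{i}$ and VC dimension exactly $k$.

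The atomic perturbation is a swap: given a $k$-set $S$ and an element $v \notin S$, remove $S$ and insert $S \cup \{v\}$. A short check shows the resulting family remains extremal, since the unique new $(k+1)$-set $S \cup \{v\}$ cannot be shattered---the only candidate to realize the trace $S$ on $S \cup \{v\}$ was $S$ itself, which has been removed. To perform many swaps at once I would encode them by a function $\sigma \colon \binom{[n]}{k} \to \{\star\} \cup [n]$ with $\sigma(S) \in \{\star\} \cup ([n] \setminus S)$, and set
\[
\mathcal{F}_\sigma \;=\; \binom{[n]}{\leq k-1} \cup \{S : \sigma(S) = \star\} \cup \{S \cup \{\sigma(S)\} : \sigma(S) \neq \star\}.
\]
Call $\sigma$ \emph{admissible} when all the swapped-in $(k+1)$-sets are pairwise distinct, so that $|\mathcal{F}_\sigma| = \sum_{i=0}^{k} \binom{n}{i}$. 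A Poisson-type estimate (valid because $k = n^{o(1)}$) shows that admissibility removes only an $n^{-o(\binom{n}{k})}$ fraction of the $(n-k+1)^{\binom{n}{k}}$ functions, leaving $n^{(1-o(1))\binom{n}{k}}$ admissible configurations; a Bregman--Minc-style permanent bound on the pairing between swapped-out $k$-sets and swapped-in $(k+1)$-sets further gives that the map $\sigma \mapsto \mathcal{F}_\sigma$ has multiplicity at most $(k+1)^{\binom{n}{k}} = n^{o(\binom{n}{k})}$, so distinct families are produced at the same rate.

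The core step is to show that $\mathcal{F}_\sigma$ has VC dimension exactly $k$ for most admissible $\sigma$. A case analysis yields a clean combinatorial criterion: a $(k+1)$-set $T$ is shattered by $\mathcal{F}_\sigma$ iff (i) $T = S_0 \cup \{\sigma(S_0)\}$ for some $k$-subset $S_0 \subset T$, and (ii) for every $k$-subset $S' \subset T$, either $\sigma(S')$ lies outside $T$ (so the trace $S'$ is realized directly by $S' \cup \{\sigma(S')\}$), or some ``compensating swap'' $\sigma(S' \cup \{w\} \setminus \{u\}) = u$ occurs with $u \in S'$ and $w \notin T$. Using this criterion, I would forbid shattering by imposing mild local restrictions on $\sigma$---for instance requiring $\sigma(S) \in W_S$ for carefully chosen sets $W_S \subseteq \{\star\} \cup ([n] \setminus S)$ of size $(1-o(1))n$, engineered to block the ``compensating swap'' patterns globally---while preserving $n^{(1-o(1))\binom{n}{k}}$ valid configurations.

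The main obstacle is the dependency structure of the shattering events: the value $\sigma(S)$ enters the shattering criterion of many different $(k+1)$-sets, so a naive union bound over the $\binom{n}{k+1}$ candidate $T$'s fails badly (indeed, the expected number of shattered $T$'s under a uniformly random $\sigma$ is already of order $\binom{n}{k}$, and even a second-moment bound does not suffice). I would handle this either by a direct bucketing argument, grouping admissible $\sigma$ by their set of shattered $(k+1)$-sets and bounding each class, or by a Lov\'asz Local Lemma argument on the ``conflict graph'' whose vertices are $(k+1)$-sets and whose edges connect pairs sharing a $k$-subset. In either case, the regime $k = n^{o(1)}$ is permissive enough that the main $\binom{n}{k}\log n$-order terms dominate all auxiliary corrections, so the $(1-o(1))$ factor in the exponent survives.
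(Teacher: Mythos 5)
The survey does not reproduce a proof of this theorem; it only records the result, but the chain of inequalities stated immediately afterwards, $n^{(1+o(1))\binom{n}{k}}\leqslant \operatorname{IndMat}(n,k)\leqslant f(n,k)\leqslant \operatorname{ExVC}(n,k)\leqslant n^{(1+o(1))\binom{n}{k}}$, reveals the intended route: the lower bound comes from counting \emph{induced} matchings between the layers $\binom{[n]}{k}$ and $\binom{[n]}{k+1}$ of the hypercube. Your construction is that construction in disguise---swap matched $k$-sets up to $(k+1)$-sets inside $\binom{[n]}{\leqslant k}$---but you omit the one condition that makes it work. If the swap pairs $\{(S,\,S\cup\{\sigma(S)\}):\sigma(S)\neq\star\}$ form an induced matching, then any shattered $(k+1)$-set $T$ must be a swapped-in set $S_0\cup\{\sigma(S_0)\}$, and the trace $S_0$ on $T$ could only be realized by another swapped-in $(k+1)$-set containing $S_0$, which the induced condition forbids; so VC dimension exactly $k$ holds \emph{deterministically}, and the matching (hence $\sigma$) is recoverable from the family, which also disposes of your multiplicity estimate. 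Your single-swap check and your shattering criterion are correct, and quoting Alon--Moran--Yehudayoff for the upper bound is legitimate.

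Without the induced-matching condition your argument has a genuine gap at exactly the point you flag. As you note, the expected number of shattered $(k+1)$-sets under a uniformly random admissible $\sigma$ is $\Theta\bigl(\binom{n}{k}\bigr)$: for a typical $S_0$ with $\sigma(S_0)\neq\star$, the set $T=S_0\cup\{\sigma(S_0)\}$ is shattered with probability bounded away from $0$, since each other $k$-subset $S'\subset T$ satisfies $\sigma(S')\notin T$ with probability $1-O(1/n)$ and compensating swaps for $S_0$ occur in expectation about $k$ times. So a generic admissible $\sigma$ yields VC dimension $k+1$, and neither proposed repair is shown to work. The Local Lemma fails with these parameters: the bad event $B_T$ has probability $\Theta(k/n)$, but $B_T$ and $B_{T'}$ interact whenever $T$ and $T'$ share a $k$-subset, giving dependency degree at least $\Omega(kn)$, so the relevant product is $\Omega(k^2)$ rather than below $1/e$ (and a bare existence statement would not give a count anyway). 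As for the restricted sets $W_S$: choosing them so as to block all shattering patterns globally while keeping $|W_S|=n^{1-o(1)}$ choices per $S$ is precisely the assertion $\operatorname{IndMat}(n,k)\geqslant n^{(1-o(1))\binom{n}{k}}$, which is the substantive content of Balogh--M\'{e}sz\'{a}ros--Wagner's proof and is nowhere supplied in your sketch.
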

Similarly, let $\operatorname{ExVC}(n,k)$ be the number of families $\mathcal{F}\subseteq 2^{[n]}$ of VC dimension $k$ with $|tr(\mathcal{F})|=|\mathcal{F}|$. 
Theorem~\ref{Thm:SS} shows that for a family $\mathcal{F}\subseteq 2^{[n]}$ of VC dimension $k$, $|\mathcal{F}|=\sum_{i=0}^{k}\binom{n}{i}$ implies $|tr(\mathcal{F})|=|\mathcal{F}|$. 
Thus we have $f(n,k)\leqslant \operatorname{ExVC}(n,k)$. 
In fact, Balogh, M\'{e}sz\'{a}ros and Wagner~\cite{BMW2018} proved a further result. An induced matching is a matching such that no endpoints of two edges of the matching are joined by an edge of the graph. 
Let $\operatorname{IndMat}(n,k)$ denote the number of induced matchings in $Q_{n}$ between the layers $\binom{[n]}{k}$ and $\binom{[n]}{k+1}$, where $Q_{n}$ denotes the hypercube of dimension $n$. 
Balogh, M\'{e}sz\'{a}ros and Wagner~\cite{BMW2018} showed that the asymptotics of the logarithm of these two quantities and of $f(n,k)$ have the same value, as follows.
\begin{theorem}[Balogh, M\'{e}sz\'{a}ros and Wagner~\cite{BMW2018}]
Let $k=n^{o(1)}$. Then we have
\[
n^{(1+o(1))\binom{n}{k}}\leqslant \operatorname{IndMat}(n,k)\leqslant f(n,k)\leqslant \operatorname{ExVC}(n,k)\leqslant n^{(1+o(1))\binom{n}{k}}.
\]
\end{theorem}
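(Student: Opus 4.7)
The chain comprises four inequalities, of which the two inner ones are transparent and the two outer ones are substantive. The inclusion $f(n,k)\leqslant\operatorname{ExVC}(n,k)$ is immediate from Pajor's theorem (Theorem~\ref{Pajor}): any $\mathcal{F}$ counted by $f(n,k)$ satisfies $\VC(\mathcal{F})=k$, so $tr(\mathcal{F})\subseteq\binom{[n]}{\leq k}$ forces $|tr(\mathcal{F})|\leqslant\sum_{i=0}^{k}\binom{n}{i}=|\mathcal{F}|$; Pajor's inequality then upgrades this to equality, placing $\mathcal{F}$ inside the class counted by $\operatorname{ExVC}(n,k)$.

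For the explicit injection $\operatorname{IndMat}(n,k)\leqslant f(n,k)$, given an induced matching $M$ between $\binom{[n]}{k}$ and $\binom{[n]}{k+1}$ in $Q_n$, I would set
\[
\mathcal{F}_M:=\binom{[n]}{\leq k-1}\cup\bigl\{A\in\tbinom{[n]}{k}:A\notin V(M)\bigr\}\cup\bigl\{B\in\tbinom{[n]}{k+1}:B\in V(M)\bigr\}.
\]
A direct count gives $|\mathcal{F}_M|=\sum_{i=0}^{k}\binom{n}{i}$, and no $T\in\binom{[n]}{k+1}$ can be shattered: such a $T$ must lie in $\mathcal{F}_M$ via some edge $(T\setminus\{b^{*}\},T)\in M$, and realizing the trace value $T\setminus\{b^{*}\}$ further forces the existence of an $F=(T\setminus\{b^{*}\})\cup\{c\}\in\mathcal{F}_M$ with $c\notin T$, whence $F\in V(M)$; then $T\setminus\{b^{*}\}\subset F$ with both endpoints in $V(M)$ forces $(T\setminus\{b^{*}\},F)\in M$ by the induced property, contradicting that $T\setminus\{b^{*}\}$ is already matched with $T$. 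Injectivity of $M\mapsto\mathcal{F}_M$ follows because $M$ is recoverable: the matched $k$-sets are exactly $\binom{[n]}{k}\setminus\mathcal{F}_M$, and each is paired with the unique $(k+1)$-superset in $\binom{[n]}{k+1}\cap\mathcal{F}_M$ (uniqueness again by the induced property).

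For the lower bound $\operatorname{IndMat}(n,k)\geqslant n^{(1+o(1))\binom{n}{k}}$, I would use an alteration argument. Draw $\phi:\binom{[n]}{k}\to[n]$ with $\phi(A)\notin A$ uniformly and independently, producing candidate edges $\{(A,A\cup\{\phi(A)\})\}_{A}$. A clash between $A$ and $A'$ requires $A'$ to lie in one of $O(k^{2})$ explicit neighborhoods of $A$ and is triggered by a single value of $\phi(A')$, so it occurs with probability $O(k^{2}/n)$. An alteration step deletes the $o(1)$-fraction of clashing $A$'s while leaving $(1-o(1))\binom{n}{k}$ edges of a bona fide induced matching, and since distinct surviving restrictions of $\phi$ produce distinct induced matchings, the total count is at least $(n-k)^{(1-o(1))\binom{n}{k}}=n^{(1+o(1))\binom{n}{k}}$ under the hypothesis $k=n^{o(1)}$.

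The principal obstacle is the upper bound $\operatorname{ExVC}(n,k)\leqslant n^{(1+o(1))\binom{n}{k}}$, since the Alon-Moran-Yehudayoff entropy bound covers only the maximum-size families counted by $f(n,k)$. My plan is to invoke Theorem~\ref{tr equ str}: every $\mathcal{F}$ counted by $\operatorname{ExVC}(n,k)$ has a \emph{unique} squashed hereditary form $\tilde{\mathcal{F}}$ of VC-dimension at most $k$. Since such a hereditary family is determined by its antichain of maxima in $\binom{[n]}{\leq k}$, the total number of candidates $\tilde{\mathcal{F}}$ is at most $2^{O(\binom{n}{k})}$, negligible on the logarithmic scale. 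The heart of the proof is therefore to bound, for a fixed $\tilde{\mathcal{F}}$, the number of $\mathcal{F}$'s that squash to it: reversing a squash at a vertex $v$ ``lifts'' a chosen subfamily $\mathcal{S}$ (with $v\notin F$ for each $F\in\mathcal{S}$) to $\{F\cup\{v\}:F\in\mathcal{S}\}$, subject to preserving the equality $|tr(\mathcal{F})|=|\mathcal{F}|$. A container-style encoding that represents each admissible reverse-squashing sequence by a fingerprint of size $(1+o(1))\binom{n}{k}\log_{2}n$ would suffice. Establishing such a container lemma — in particular, showing that most of the lifting choices are forced by a small witness — is the central technical difficulty and would form the main content of the proof.
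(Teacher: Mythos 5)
Your two inner inequalities are fine: the Pajor argument for $f(n,k)\leqslant\operatorname{ExVC}(n,k)$ and the injection $M\mapsto\mathcal{F}_M$ for $\operatorname{IndMat}(n,k)\leqslant f(n,k)$ both check out (this survey only cites the result of Balogh, M\'esz\'aros and Wagner without proof, so I am judging your argument on its own terms). The two outer inequalities, however, each contain a genuine gap. For the lower bound on $\operatorname{IndMat}(n,k)$, the alteration argument fails because the clash count is wrong. The sets $A'$ that can clash with $A$ are exactly those at Hamming distance $2$ from $A$, of which there are $k(n-k)$, not $O(k^2)$; each clashes with probability $\Theta(1/n)$, so the expected number of clashes per vertex is $\Theta(k)$, not $o(1)$. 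Worse, the clashes are not all random: once $\phi(A)$ is chosen, each of the $k$ sets $(A\setminus\{x\})\cup\{\phi(A)\}$, $x\in A$, is itself the lower endpoint of a candidate edge and is contained in $A\cup\{\phi(A)\}$, so it clashes with $A$ deterministically, whatever its own partner is. Hence \emph{every} $A$ is a clashing vertex, "deleting the clashing $A$'s" deletes the whole layer, and a second-moment computation shows that with high probability every induced sub-matching of the random candidate edge set has at most $(1-c)\binom{n}{k}$ edges for an absolute constant $c>0$, which only yields $n^{(1-c)\binom{n}{k}}$ matchings. The correct route is a structured construction: fix $W\subseteq[n]$ with $|W|=t\approx n/(k\omega)$ for a slowly growing $\omega$, let $S=\{A\in\binom{[n]}{k}:A\cap W=\emptyset\}$, and match each $A\in S$ to $A\cup\{\beta(A)\}$ with $\beta(A)\in W$ arbitrary. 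Any $k$-set $A'\neq A$ contained in $A\cup\{\beta(A)\}$ must contain $\beta(A)\in W$ and hence lies outside $S$, so the matching is induced regardless of $\beta$; the $t^{|S|}=t^{\binom{n-t}{k}}=n^{(1-o(1))\binom{n}{k}}$ choices of $\beta$ give distinct induced matchings when $k=n^{o(1)}$.

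The second gap is the bound $\operatorname{ExVC}(n,k)\leqslant n^{(1+o(1))\binom{n}{k}}$, which you have not proved but only reduced to a hypothetical container lemma. As you note, the Alon--Moran--Yehudayoff entropy argument applies only to families of the maximum size $\sum_{i=0}^{k}\binom{n}{i}$, whereas $\operatorname{ExVC}(n,k)$ counts shattering-extremal families of every size; extending the upper bound from $f(n,k)$ to $\operatorname{ExVC}(n,k)$ is precisely the substantive content of this inequality. A plan whose final sentence is that establishing the required encoding "is the central technical difficulty and would form the main content of the proof" leaves that content entirely unsupplied, and nothing in your reduction (the uniqueness of the squashed family from Theorem~\ref{tr equ str}, the count of hereditary candidates) constrains the number of preimages of a given hereditary family under squashing, which is exactly where the difficulty lives. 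As it stands, only the two easy inequalities and none of the two quantitative bounds are established.
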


\section{Forbidden Configurations in Matrices}
\label{sec:forbidden_config}
The main topic of this section is to reformulate the Sauer-Shelah Lemma through the language of forbidden configurations in {\it simple matrices}. Before proceeding, we require some basic definitions.

A {\it simple matrix} is a $(0,1)$-matrix with no repeated columns. Such matrices naturally correspond to set families in set theory. For a family $\mathcal{F} \subseteq 2^{[n]}$, it corresponds to a unique $n\times |\mathcal{F}|$ simple matrix $\mathbf{A} = (a_{ij})$, where rows index elements of $[n]$, columns index sets in $\mathcal{F}$, and $a_{ij} = 1$ if and only if element $i\in[n]$ belongs to the $j$-th set of $\mathcal{F}$. This matrix $\mathbf{A}$ is called the \textit{incidence matrix} of $\mathcal{F}$. 
Conversely, a simple matrix also corresponds to a unique set family.

\begin{definition}
    Given a fixed $k$-row $(0,1)$-matrix $\mathbf{F}$, we say a $(0,1)$-matrix $\mathbf{A}$ \textit{contains configuration $\mathbf{F}$} if there is a submatrix of $\mathbf{A}$ that is a row and column permutation of $\mathbf{F}$. We refer to $\mathbf{F}$ as a \textit{configuration} of $\mathbf{A}$ and write $\mathbf{F}\prec \mathbf{A}$.
\end{definition}

It can be checked that $\prec$ is a partial order on the set of all $(0,1)$-matrices. The following proposition shows that configurations align precisely with traces in set theory:
\begin{proposition}
    Let $\mathbf{F}$ and $\mathbf{G}$ be the incidence matrices of families $\mathcal{F}\subseteq 2^{[k]}$ and $\mathcal{G} \subseteq 2^{[n]}$ respectively, where $k\leqslant n$ are two positive integers. Then $\mathbf{G}$ avoids $\mathbf{F}$ as a configuration if and only if $\mathcal{G}$ forbids any isomorphic copy of $\mathcal{F}$ as a trace on some $k$-element subset of $[n]$.
\end{proposition}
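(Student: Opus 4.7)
The plan is to prove the biconditional by directly translating between the matrix and set-family languages via the incidence correspondence. In contrapositive form the claim reads: $\mathbf{F}\prec\mathbf{G}$ if and only if there exists a $k$-element subset $T\subseteq [n]$ such that $\mathcal{G}_{\mid T}$ contains an isomorphic copy of $\mathcal{F}$.

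First I would fix the interpretation: an \emph{isomorphic copy of $\mathcal{F}$ in $\mathcal{G}_{\mid T}$} means a bijection $\sigma\colon[k]\to T$ together with the inclusion $\{\sigma(F):F\in\mathcal{F}\}\subseteq \mathcal{G}_{\mid T}$, where $\sigma(F):=\{\sigma(i):i\in F\}$. For the $(\Leftarrow)$ direction, given such data, for each $F\in\mathcal{F}$ I would choose some $G_F\in\mathcal{G}$ with $G_F\cap T=\sigma(F)$; the chosen $G_F$ may be taken pairwise distinct since the $\sigma(F)$ already are. I would then form the $k\times|\mathcal{F}|$ submatrix of $\mathbf{G}$ restricted to rows indexed by $T$ (in the order $\sigma(1),\ldots,\sigma(k)$) and columns indexed by $\{G_F\}_{F\in\mathcal{F}}$. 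Its $(i,F)$-entry is $1$ exactly when $\sigma(i)\in G_F$, which by the choice of $G_F$ is equivalent to $\sigma(i)\in\sigma(F)$, and hence to $i\in F$; this matches the $(i,F)$-entry of $\mathbf{F}$, so $\mathbf{F}\prec\mathbf{G}$.

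For the $(\Rightarrow)$ direction I would reverse this reading. Suppose $\mathbf{F}\prec\mathbf{G}$, witnessed by a $k\times|\mathcal{F}|$ submatrix of $\mathbf{G}$ together with the required row and column permutations. The chosen rows pick out a set $T\subseteq[n]$ with $|T|=k$, and the row permutation yields a bijection $\sigma\colon[k]\to T$; the chosen columns pick out a set $\mathcal{G}'\subseteq\mathcal{G}$ of size $|\mathcal{F}|$, and the column permutation yields a bijection $\varphi\colon\mathcal{F}\to\mathcal{G}'$. The submatrix equalling $\mathbf{F}$ under these permutations translates to the equivalence $i\in F \Leftrightarrow \sigma(i)\in\varphi(F)$ for all $i\in[k]$ and $F\in\mathcal{F}$, that is, $\varphi(F)\cap T=\sigma(F)$. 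Hence $\sigma(F)\in \mathcal{G}_{\mid T}$ for every $F\in\mathcal{F}$, exhibiting $\{\sigma(F):F\in\mathcal{F}\}$ as an isomorphic copy of $\mathcal{F}$ inside $\mathcal{G}_{\mid T}$.

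The main (and quite modest) obstacle is bookkeeping: one must keep the two permutations clearly distinguished---the row bijection $\sigma$ between $[k]$ and $T$, which implements the ``isomorphism'' on the ground set, versus the column bijection $\varphi$ between $\mathcal{F}$ and the selected columns of $\mathbf{G}$---and observe that simplicity of $\mathbf{F}$ is what guarantees that the chosen columns $G_F$ of $\mathbf{G}$ are pairwise distinct, so that the submatrix genuinely has $|\mathcal{F}|$ columns. Once this setup is in place, the proof is a pure translation of definitions.
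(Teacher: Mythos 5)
Your proof is correct; the paper states this proposition without proof, treating it as an immediate translation of the definitions, and your argument is exactly that definitional unwinding (row permutation $\leftrightarrow$ ground-set bijection $\sigma$, column permutation $\leftrightarrow$ set bijection $\varphi$, with simplicity of $\mathbf{F}$ ensuring distinct columns). Nothing further is needed.
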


\begin{definition}
    For a $(0,1)$-matrix $\mathbf{F}$, we denote $\operatorname{forb}(m, \mathbf{F})$ as the largest value such that there exists a simple $m \times \operatorname{forb}(m, \mathbf{F})$ matrix $\mathbf{A}$ such that $\mathbf{A}$ has no configuration $\mathbf{F}$.
\end{definition}

Note that $\mathbf{F}$ is not required to be simple when considering $\operatorname{forb}(m, \mathbf{F})$.
The following are two straightforward yet useful propositions for $\operatorname{forb}(m, \mathbf{F})$.
\begin{proposition}
Assume that $\mathbf{F}$ is a $(0,1)$-matrix.
    \begin{itemize}
        \item[(1)] Let $\mathbf{F}^{c}$ denote the $(0,1)$-complement of $\mathbf{F}$. Then we have $\operatorname{forb}(m, \mathbf{F})=\operatorname{forb}(m, \mathbf{F}^{c})$.
        \item[(2)] If $\mathbf{F}$ has $\mathbf{F}'$ as a configuration, then we have $\operatorname{forb}(m, \mathbf{F}')\leqslant \operatorname{forb}(m, \mathbf{F})$.
    \end{itemize}
\end{proposition}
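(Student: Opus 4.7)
The plan is to handle the two parts separately, since each follows directly from functorial properties of the $\prec$ relation and the complementation operation on $(0,1)$-matrices.

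For part (1), I would exhibit an explicit bijection between simple $m$-rowed matrices avoiding $\mathbf{F}$ and simple $m$-rowed matrices avoiding $\mathbf{F}^{c}$, given by the entrywise complementation $\mathbf{A}\mapsto \mathbf{A}^{c}$. First I would check that this map is well-defined: entrywise complementation sends distinct columns to distinct columns, so $\mathbf{A}^{c}$ is simple whenever $\mathbf{A}$ is, and $(\mathbf{A}^{c})^{c}=\mathbf{A}$ so the map is an involution. The key verification is that complementation commutes with row/column permutation and with taking submatrices; consequently $\mathbf{F}\prec \mathbf{A}$ if and only if $\mathbf{F}^{c}\prec \mathbf{A}^{c}$. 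This means the involution restricts to a bijection between the two avoidance classes, and since both matrices have the same number of columns, the two extremal values agree.

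For part (2), I would argue by contrapositive inclusion of avoidance classes. Assume $\mathbf{F}'\prec \mathbf{F}$. If $\mathbf{A}$ is a simple $m$-rowed matrix that contains $\mathbf{F}$ as a configuration, i.e.\ $\mathbf{F}\prec \mathbf{A}$, then since $\prec$ is transitive we have $\mathbf{F}'\prec \mathbf{A}$ as well. Taking contrapositives, every simple $m$-rowed matrix avoiding $\mathbf{F}'$ also avoids $\mathbf{F}$. Therefore the class of simple matrices avoiding $\mathbf{F}'$ is a subclass of those avoiding $\mathbf{F}$, and taking the supremum of the number of columns on each side yields $\operatorname{forb}(m,\mathbf{F}')\leqslant \operatorname{forb}(m,\mathbf{F})$.

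The only thing that needs even mild care is part (1): one must observe that, although $\mathbf{F}$ is not required to be simple in the definition of $\operatorname{forb}(m,\mathbf{F})$, the ambient matrix $\mathbf{A}$ is simple, and simpleness is preserved under entrywise complementation. I would state this explicitly to avoid any ambiguity. No other step presents a genuine obstacle, and no deeper combinatorial input is needed beyond the fact that $\prec$ is a partial order, which is noted in the excerpt just before the proposition.
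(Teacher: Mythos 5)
Your proof is correct and is the standard argument; the paper itself states this proposition without proof, describing it as straightforward, and your two arguments (the complementation involution preserving simplicity and the $\prec$ relation for part (1), and transitivity of $\prec$ giving containment of avoidance classes for part (2)) are exactly the intended justifications. Your explicit remark that simplicity of the ambient matrix is preserved under entrywise complementation, even though $\mathbf{F}$ itself need not be simple, is a worthwhile point of care.
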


Let $\mathbf{K}_k$ denote the complete configuration which contains all $2^k$ binary columns on $k$ rows.
Then Sauer-Shelah Lemma can be restated as follows:
\begin{theorem}[Sauer-Shelah Lemma]\label{conf K_k}
For the complete configuration $\mathbf{K}_k$, we have
\begin{equation}
\operatorname{forb}(m, \mathbf{K}_k) = \sum_{i=0}^{k-1} \binom{m}{i} = \Theta(m^{k-1})
\end{equation}
\end{theorem}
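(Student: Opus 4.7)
The plan is to transfer the Sauer-Shelah Lemma (Theorem \ref{Sauer}) through the matrix-set dictionary established immediately before this theorem. Specifically, simple $m$-row $(0,1)$-matrices are in bijection with families $\mathcal{F} \subseteq 2^{[m]}$ via incidence, and the configuration $\mathbf{K}_k$ appears in the incidence matrix of $\mathcal{F}$ if and only if $\mathcal{F}$ shatters some $k$-element subset $T \subseteq [m]$. So $\operatorname{forb}(m, \mathbf{K}_k)$ is exactly the maximum size of a family $\mathcal{F} \subseteq 2^{[m]}$ that shatters no $k$-element set.

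For the upper bound, I would argue by contraposition. Suppose $\mathcal{F}\subseteq 2^{[m]}$ satisfies $|\mathcal{F}| \geqslant \sum_{i=0}^{k-1}\binom{m}{i}+1$. Then Theorem \ref{Sauer} (with $n=m$) guarantees a $k$-element subset $T \subseteq [m]$ with $\mathcal{F}_{\mid T} = 2^T$, so the incidence matrix of $\mathcal{F}$ contains $\mathbf{K}_k$ as a configuration. Hence any simple $m$-row matrix avoiding $\mathbf{K}_k$ has at most $\sum_{i=0}^{k-1}\binom{m}{i}$ columns, giving $\operatorname{forb}(m, \mathbf{K}_k) \leqslant \sum_{i=0}^{k-1}\binom{m}{i}$.

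For the matching lower bound, I would exhibit an extremal construction. Take $\mathcal{F} = \binom{[m]}{\leqslant k-1}$, the family of all subsets of $[m]$ of size at most $k-1$. Then $|\mathcal{F}| = \sum_{i=0}^{k-1}\binom{m}{i}$, and for every $k$-subset $T \subseteq [m]$ the trace $\mathcal{F}_{\mid T}$ misses $T$ itself, so $T$ is not shattered. The incidence matrix of this $\mathcal{F}$ is therefore a simple $m \times \sum_{i=0}^{k-1}\binom{m}{i}$ matrix with no $\mathbf{K}_k$ configuration, proving $\operatorname{forb}(m, \mathbf{K}_k) \geqslant \sum_{i=0}^{k-1}\binom{m}{i}$.

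Combining both inequalities yields the exact equality. The asymptotic estimate $\sum_{i=0}^{k-1}\binom{m}{i} = \Theta(m^{k-1})$ is then a routine observation since $\binom{m}{k-1}$ dominates the sum for $m \to \infty$ with $k$ fixed. In truth there is no real obstacle here: once the dictionary between configurations and shattered traces is in place, the theorem is simply a restatement of Theorem \ref{Sauer}, and the only thing worth being careful about is keeping the direction of the inequality and the roles of $\mathcal{F}$ versus its incidence matrix straight when invoking the set-theoretic statement.
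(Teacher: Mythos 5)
Your proposal is correct and matches the paper's treatment: the paper presents this theorem as a direct restatement of the Sauer--Shelah Lemma via the correspondence between configurations in simple matrices and traces of set families, which is exactly the dictionary you invoke. Your explicit lower-bound construction $\binom{[m]}{\leqslant k-1}$ is the standard witness for the tightness already asserted in Theorem~\ref{Sauer}, so nothing is missing.
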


Let $\mathbf{K}_k^s$ denote the $k \times \binom{k}{s}$ simple matrix consisting of all possible columns with a column sum of $s$. Since $\mathbf{K}_{k}$ has $\mathbf{K}_k^s$ as a configuration, it is easy to see that $\operatorname{forb}(m, \mathbf{K}_k^s)\leqslant \operatorname{forb}(m, \mathbf{K}_k)$. F\"{u}redi and Quinn~\cite{FQ1983} proved that these two functions are eventually equal.

\begin{theorem}[F\"{u}redi and Quinn~\cite{FQ1983}]\label{conf K_k^s}
Let $k$ and $s$ be positive integers with $0 \leqslant s \leqslant k$. Then
\[
\operatorname{forb}(m, \mathbf{K}_k^s) = \binom{m}{k-1} + \binom{m}{k-2} + \cdots + \binom{m}{0}.
\]
\end{theorem}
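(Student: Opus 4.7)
The proof splits naturally into the upper bound and a matching lower bound, with the latter carrying the main content.

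\textbf{Upper bound.} Since $\mathbf{K}_k^s$ is obtained from $\mathbf{K}_k$ by keeping only the columns of weight $s$, we have $\mathbf{K}_k^s \prec \mathbf{K}_k$. By the monotonicity of $\operatorname{forb}$ stated in the preceding proposition, together with Theorem~\ref{conf K_k},
\[
\operatorname{forb}(m, \mathbf{K}_k^s) \le \operatorname{forb}(m, \mathbf{K}_k) = \sum_{i=0}^{k-1}\binom{m}{i}.
\]

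\textbf{Lower bound.} The task is to exhibit a simple $m \times \sum_{i=0}^{k-1}\binom{m}{i}$ matrix avoiding $\mathbf{K}_k^s$. The extreme cases $s\in\{0,k\}$ are transparent: $\mathbf{K}_k^0$ is the length-$k$ zero column and is avoided by the matrix whose columns are all $(0,1)$-vectors of weight $\ge m-k+1$, which has exactly $\sum_{i=m-k+1}^{m}\binom{m}{i}=\sum_{i=0}^{k-1}\binom{m}{i}$ columns; the case $s=k$ follows by $(0,1)$-complementing, using the first part of the preceding proposition.

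For the generic range $1\le s\le k-1$, the naive Sauer--Shelah extremal matrix $\binom{[m]}{\le k-1}$ does not suffice, since on any $k$ rows it realizes \emph{every} subset of size at most $k-1$ and hence contains $\mathbf{K}_k^s$. The plan is to proceed by induction on $k$ via the standard row-splitting decomposition: given a candidate $\mathbf{A}$, partition its columns by the entry in row $m$ as $\mathbf{A}=[\mathbf{A}_0\mid\mathbf{A}_1]$, and set $\mathbf{C}=\mathbf{A}_0\cap\mathbf{A}_1$ (those columns whose restrictions to rows $[m-1]$ appear under both values of row $m$); then $|\mathbf{A}|=|\mathbf{A}_0\cup\mathbf{A}_1|+|\mathbf{C}|$. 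A direct weight-partition shows that $\mathbf{A}$ contains $\mathbf{K}_k^s$ on a $k$-set $R\cup\{m\}$ with $R\subseteq[m-1]$ if and only if $\mathbf{A}_0|_R\supseteq\mathbf{K}_{k-1}^{s}$ and $\mathbf{A}_1|_R\supseteq\mathbf{K}_{k-1}^{s-1}$.

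Guided by this, I would take $\mathbf{A}_0\cup\mathbf{A}_1$ to be a maximum inductively constructed $(m-1)$-row simple matrix avoiding $\mathbf{K}_k^s$ (contributing $\sum_{i=0}^{k-1}\binom{m-1}{i}$ columns), and take $\mathbf{C}$ to be a maximum further collection of duplications subject to the constraint that no $(k-1)$-set $R$ exhibits both sub-configurations in the characterization above. Pascal's identity $\binom{m}{i}=\binom{m-1}{i}+\binom{m-1}{i-1}$ then delivers the target count $\sum_{i=0}^{k-1}\binom{m}{i}$, provided $\mathbf{C}$ achieves the auxiliary bound $\sum_{i=0}^{k-2}\binom{m-1}{i}$.

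The main obstacle is precisely this auxiliary extremal estimate: showing that one may duplicate enough columns without ever creating $\mathbf{K}_{k-1}^{s-1}$ in $\mathbf{A}_1|_R$ jointly with $\mathbf{K}_{k-1}^{s}$ already present in $\mathbf{A}_0|_R$. I expect to handle this by a parallel induction, treating the pair $(\mathbf{K}_{k-1}^{s-1},\mathbf{K}_{k-1}^{s})$ as a joint forbidden configuration and bounding its $\operatorname{forb}$ by $\sum_{i=0}^{k-2}\binom{m-1}{i}$ via the analogous decomposition. This mirrors the standard Sauer--Shelah recurrence and is where the bulk of the combinatorial care must be placed.
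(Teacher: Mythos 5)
Your upper bound is correct (monotonicity of $\operatorname{forb}$ under $\prec$ plus the Sauer--Shelah value for $\mathbf{K}_k$), as are the boundary cases $s\in\{0,k\}$ of the lower bound, and your characterization of when $\mathbf{K}_k^s$ sits on a row set $R\cup\{m\}$ in terms of $\mathbf{K}_{k-1}^{s}\prec\mathbf{A}_0|_R$ and $\mathbf{K}_{k-1}^{s-1}\prec\mathbf{A}_1|_R$ is accurate. But the heart of the theorem is the lower bound for $1\le s\le k-1$, and there your argument stops at exactly the point where the content lies. Two concrete problems. First, what you need is a \emph{construction}: you must exhibit a set $\mathbf{C}$ of $\sum_{i=0}^{k-2}\binom{m-1}{i}$ columns to duplicate \emph{and} an assignment of the row-$m$ entry to every non-duplicated column so that no $(k-1)$-set $R$ simultaneously realizes both subconfigurations. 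Your sketch says nothing about how the non-duplicated columns are split between $\mathbf{A}_0$ and $\mathbf{A}_1$, yet the avoidance condition depends on that split just as much as on $\mathbf{C}$. Second, you frame the deferred step as ``bounding the $\operatorname{forb}$ of the pair $(\mathbf{K}_{k-1}^{s-1},\mathbf{K}_{k-1}^{s})$ from above by $\sum_{i=0}^{k-2}\binom{m-1}{i}$,'' which is the wrong direction: an upper bound on how large $\mathbf{C}$ \emph{could} be does not show that a valid $\mathbf{C}$ of that size \emph{exists}. Moreover the ``joint'' condition couples two different matrices $\mathbf{A}_0,\mathbf{A}_1$ on the same row set, so it is not a standard single-matrix $\operatorname{forb}$ problem and it is not clear your parallel induction closes.

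The argument of F\"{u}redi and Quinn does not route through this inductive duplication scheme at all: the lower bound is established by a single explicit family of $\sum_{i=0}^{k-1}\binom{m}{i}$ subsets of $[m]$ (a ``staircase''-type construction in which each member has fewer than $s$ ones above a suitable threshold and fewer than $k-s$ zeros below it, so that on any $k$ rows some $s$-subset is never realized as a trace). Until you either supply such an explicit family or genuinely carry out the existence half of your inductive scheme, the proof is incomplete precisely where the theorem is nontrivial.
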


Let $[\mathbf{A} \mid \mathbf{B}]$ denote the matrix obtained from concatenating the two matrices $\mathbf{A}$ and $\mathbf{B}$. For any simple matrix $\mathbf{A}$ and a positive integer $k$, let $k \cdot \mathbf{A}$ to denote the matrix $[\mathbf{A}|\mathbf{A}| \cdots \mid \mathbf{A}]$ consisting of $k$ copies of $\mathbf{A}$ concatenated together. The following theorem of Gronau~\cite{G1980} gives the exact value of $\operatorname{forb}(m, 2 \cdot \mathbf{K}_k)$, which is equal to $\operatorname{forb}(m, \mathbf{K}_{k+1})$.

\begin{theorem}[Gronau~\cite{G1980}]
We have
\[
\operatorname{forb}(m, 2 \cdot \mathbf{K}_k) = \binom{m}{k} + \binom{m}{k-1} + \cdots + \binom{m}{0}.
\]
\end{theorem}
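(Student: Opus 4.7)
The plan is to prove the equality by matching upper and lower bounds, both of which should follow with modest work from results already in the excerpt. For the upper bound, I would argue that $2\cdot\mathbf{K}_k$ is itself a configuration of $\mathbf{K}_{k+1}$: taking the $(k+1)\times 2^{k+1}$ complete matrix and deleting any one row yields a $k\times 2^{k+1}$ simple-by-pairs matrix in which each binary column on the remaining $k$ rows appears exactly twice (once for each value of the deleted coordinate). Hence $2\cdot\mathbf{K}_k \prec \mathbf{K}_{k+1}$, and by the monotonicity proposition stated right before the theorem, $\operatorname{forb}(m, 2\cdot\mathbf{K}_k)\leqslant \operatorname{forb}(m, \mathbf{K}_{k+1})$. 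Theorem~\ref{conf K_k} (Sauer--Shelah) then supplies $\operatorname{forb}(m,\mathbf{K}_{k+1})=\sum_{i=0}^{k}\binom{m}{i}$, yielding the desired upper bound.

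For the lower bound, I would exhibit an explicit extremal matrix: let $\mathbf{A}$ be the incidence matrix of $\mathcal{F}=\binom{[m]}{\leqslant k}$, the family of all subsets of $[m]$ of size at most $k$. Then $\mathbf{A}$ is simple with exactly $\sum_{i=0}^{k}\binom{m}{i}$ columns. To verify that $2\cdot \mathbf{K}_k\not\prec \mathbf{A}$, suppose for contradiction that some $k$-element $T\subseteq[m]$ witnesses the configuration. Then for every $S\subseteq T$ at least two distinct sets of $\mathcal{F}$ agree with $S$ on $T$; in particular, taking $S=T$, there would be two distinct $F_1,F_2\in\binom{[m]}{\leqslant k}$ with $T\subseteq F_i$. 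But $|F_i|\leqslant k=|T|$ forces $F_1=F_2=T$, a contradiction. Thus $\mathbf{A}$ avoids $2\cdot\mathbf{K}_k$ and $\operatorname{forb}(m,2\cdot \mathbf{K}_k)\geqslant \sum_{i=0}^{k}\binom{m}{i}$.

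There is essentially no hard step here; the only point that deserves care is the verification of the configuration containment $2\cdot \mathbf{K}_k \prec \mathbf{K}_{k+1}$, which is where the equality $\operatorname{forb}(m,2\cdot\mathbf{K}_k)=\operatorname{forb}(m,\mathbf{K}_{k+1})$ ultimately comes from. Once that observation is made, the matching construction $\binom{[m]}{\leqslant k}$ immediately shows that duplicating the Sauer--Shelah target column-wise does not increase the extremal number.
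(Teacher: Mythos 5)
Your proof is correct: the containment $2\cdot\mathbf{K}_k\prec\mathbf{K}_{k+1}$ (obtained by deleting one row of $\mathbf{K}_{k+1}$) together with the monotonicity of $\operatorname{forb}$ and the Sauer--Shelah value $\operatorname{forb}(m,\mathbf{K}_{k+1})=\sum_{i=0}^{k}\binom{m}{i}$ gives the upper bound, and the incidence matrix of $\binom{[m]}{\leqslant k}$ avoids $2\cdot\mathbf{K}_k$ since the pattern $S=T$ on a $k$-set $T$ can only be realized by the single column $T$. The survey states Gronau's theorem without proof, but your argument is precisely the route it indicates when it remarks that $\operatorname{forb}(m,2\cdot\mathbf{K}_k)$ equals $\operatorname{forb}(m,\mathbf{K}_{k+1})$.
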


The following theorem by Anstee and F\"{u}redi~\cite{AF1986} determines an upper bound of $\operatorname{forb}(m, t \cdot \mathbf{K}_k)$ for general $t$.
\begin{theorem}[Anstee and F\"{u}redi~\cite{AF1986}]\label{conf t.K_k}
We have
\[
\operatorname{forb}(m, t \cdot \mathbf{K}_k) = \operatorname{forb}(m, t \cdot \mathbf{K}_k^k) \leqslant \frac{t-2}{k+1}\binom{m}{k}(1-o(1)) + \binom{m}{k} + \binom{m}{k-1} + \cdots + \binom{m}{0}.
\]
\end{theorem}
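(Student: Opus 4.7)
The plan is to split the theorem into three parts: the trivial direction $\operatorname{forb}(m, t \cdot \mathbf{K}_k^k) \leq \operatorname{forb}(m, t \cdot \mathbf{K}_k)$, which follows from $\mathbf{K}_k^k \prec \mathbf{K}_k$ and the monotonicity proposition recorded in the excerpt; the reverse direction yielding equality; and the final upper bound on $\operatorname{forb}(m, t \cdot \mathbf{K}_k^k)$.

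I would dispose of the final upper bound first, since it is cleanest. Translating to set theory, a simple $m$-rowed matrix $\mathbf{A}$ avoids $t \cdot \mathbf{K}_k^k$ exactly when the associated family $\mathcal{F} \subseteq 2^{[m]}$ has every $k$-subset of $[m]$ contained in at most $t-1$ of its members. Writing $d_i$ for the number of size-$i$ sets in $\mathcal{F}$, double counting pairs $(T,F)$ with $T \in \binom{[m]}{k}$ and $T \subseteq F \in \mathcal{F}$ gives $\sum_{i \geq k}\binom{i}{k}d_i \leq (t-1)\binom{m}{k}$. Maximizing $|\mathcal{F}| = \sum_i d_i$ under this constraint together with $d_i \leq \binom{m}{i}$ forces saturating $d_i = \binom{m}{i}$ for $i \leq k$ (which costs $\binom{m}{k}$ of the budget at level $k$) and then spending the remaining $(t-2)\binom{m}{k}$ units on size-$(k+1)$ sets, which are the most budget-efficient per extra member (each costing $k+1$ units). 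This produces the stated bound, the $(1-o(1))$ factor reflecting that the size-$(k+1)$ sets must be arranged so that no $k$-subset is over-covered, and near-perfect packings incur a sub-leading loss.

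For the equality, I would use Frankl's squash operation from Section~\ref{sec 2} to reduce to hereditary families. Given a simple $\mathbf{A}$ (family $\mathcal{F}$) avoiding $t \cdot \mathbf{K}_k$, I would iterate $S_v$ over $v \in [m]$ to reach a hereditary $\tilde{\mathcal{F}}$ of the same cardinality. The key lemma is that each $S_v$ preserves avoidance of $t \cdot \mathbf{K}_k$: fix $T \in \binom{[m]}{k}$ and let $P_0 \subseteq T$ witness avoidance in $\mathcal{F}$ (multiplicity $<t$ in the trace on $T$). If $v \notin T$, the trace on $T$ is unchanged as a multiset, so $P_0$ still works. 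If $v \in P_0$, the squash can only decrease the multiplicity of $P_0$, so $P_0$ still works. If $v \in T \setminus P_0$, I would take $P_0 \cup \{v\}$ as the new witness: any $F \in S_v(\mathcal{F})$ with $F \cap T = P_0 \cup \{v\}$ arose from an unsquashed $F \in \mathcal{F}$ with $F \setminus \{v\} \in \mathcal{F}$, and $F \mapsto F \setminus \{v\}$ injects these into $\{F' \in \mathcal{F} : F' \cap T = P_0\}$, a set of size less than $t$.

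Once $\tilde{\mathcal{F}}$ is hereditary and avoids $t \cdot \mathbf{K}_k$, one observes that it already avoids $t \cdot \mathbf{K}_k^k$: for any $T \in \binom{[m]}{k}$, picking the witness $S \subseteq T$ with trace multiplicity $<t$, the map $F \mapsto S \cup (F \setminus T)$ injects $\{F \in \tilde{\mathcal{F}} : T \subseteq F\}$ into $\{F \in \tilde{\mathcal{F}} : F \cap T = S\}$ (the image lies in $\tilde{\mathcal{F}}$ by hereditarity, since $S \cup (F \setminus T) \subseteq F$), so the multiplicity of the all-ones pattern $T$ is at most that of $S$, hence $<t$. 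Consequently $|\mathcal{F}| = |\tilde{\mathcal{F}}| \leq \operatorname{forb}(m, t \cdot \mathbf{K}_k^k)$, completing the equality. The main obstacle is the squash-preservation lemma, since the condition \emph{some pattern in the trace has multiplicity less than $t$} is fragile: squashing shuffles sets across patterns and can destroy the original witness, so one must identify the correct post-squash witness (here $P_0$ or $P_0 \cup \{v\}$, according to whether $v$ lies in $P_0$) via the explicit injection above.
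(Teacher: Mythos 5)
Your argument is correct. Note first that the survey states this result of Anstee and F\"{u}redi without proof, so there is no in-paper argument to compare against; but your route is exactly in the spirit of the machinery the survey itself develops, namely the squash operation of Section~\ref{sec 2}, and it is essentially the standard shifting proof of the equality $\operatorname{forb}(m, t \cdot \mathbf{K}_k) = \operatorname{forb}(m, t \cdot \mathbf{K}_k^k)$. The three-case analysis in your squash-preservation lemma is the genuine content and it checks out: for $v\in P_0$ the preimage of any $G$ with $G\cap T=P_0$ under the bijection $F\mapsto F_v$ must itself satisfy $F\cap T=P_0$, and for $v\in T\setminus P_0$ your injection $G\mapsto G\setminus\{v\}$ into $\{F'\in\mathcal{F}: F'\cap T=P_0\}$ works because $v\in G=F_v$ forces $F\setminus\{v\}\in\mathcal{F}$. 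The reduction from $t\cdot\mathbf{K}_k$ to $t\cdot\mathbf{K}_k^k$ for hereditary families via $F\mapsto S\cup(F\setminus T)$ is also sound, as is the double-counting/knapsack bound $\sum_{i\geq k}\binom{i}{k}d_i\leq (t-1)\binom{m}{k}$ with the greedy filling of levels $k$ and $k+1$.

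One small point of imprecision: your counting argument yields
\[
\operatorname{forb}(m, t\cdot\mathbf{K}_k^k)\;\leq\;\frac{t-2}{k+1}\binom{m}{k}+\sum_{i=0}^{k}\binom{m}{i},
\]
i.e.\ the bound \emph{without} any saving in the leading correction term, and no relaxation-based upper bound can produce a genuine $(1-o(1))$ saving. The $(1-o(1))$ in the statement really reflects the matching \emph{lower-bound} construction (all sets of size at most $k$ together with a near-optimal packing of $(k+1)$-sets covering each $k$-set at most $t-2$ times, where perfect packings need not exist), whereas your parenthetical attributes it to the upper bound. Since $1-o(1)$ subsumes the constant $1$, the bound you prove formally implies the displayed inequality, so this is a misattribution of where the error term comes from rather than a gap in the proof.
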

For $t=1$, Theorem \ref{conf K_k} shows that $\operatorname{forb}(m, \mathbf{K}_k) = \Theta(m^{k-1})$. For $t\geqslant 2$, since $\operatorname{forb}(m, t \cdot \mathbf{K}_k)\geqslant \operatorname{forb}(m, 2 \cdot \mathbf{K}_k)=\operatorname{forb}(m, \mathbf{K}_{k+1})$, Theorem \ref{conf t.K_k} gives that $\operatorname{forb}(m, t \cdot \mathbf{K}_k)= \Theta(m^{k})$.
For further results on the exact bounds of $\operatorname{forb}(m, \mathbf{F})$ for specific configurations $\mathbf{F}$, 
we refer interested readers to the survey by Anstee~\cite{A2013}.

Recall that Theorem \ref{conf K_k^s} shows that $\operatorname{forb}(m, \mathbf{K}_k)=\operatorname{forb}(m, \mathbf{K}_k^s)$. This inspires us to think of what is the minimal $\mathbf{F}'$ under the partial order $\prec$ such that $\operatorname{forb}(m, \mathbf{F}')=\operatorname{forb}(m, \mathbf{F})$. 
To be more precise, Anstee and Karp~\cite{AK2010} introduced the following definition.
\begin{definition}
    A \emph{critical substructure} of a configuration $\mathbf{F}$ as a minimal configuration $\mathbf{F}'$ under the partial order $\prec$ such that $\mathbf{F}$ contains configuration $\mathbf{F}'$ and $\operatorname{forb}(m, \mathbf{F}')=\operatorname{forb}(m, \mathbf{F})$.
\end{definition}
Raggi~\cite{R2011} verified that the only $k$-rowed critical substructures of $\mathbf{K}_{k}$ are $\mathbf{K}_{k}^{s}$ for $s=0,1,\dots k$. Besides, Raggi~\cite{R2011} also determined the complete list of critical substructures for $\mathbf{K}_{4}$.
\begin{theorem}[Raggi~\cite{R2011}]
    The critical substructures of $\mathbf{K}_{4}$ are $\mathbf{0}_{4}$, $\mathbf{I}_{4}$, $\mathbf{K}_{4}^{2}$, $\mathbf{I}_{4}^{c}$, $\mathbf{1}_{4}$, $2\cdot \mathbf{0}_{3}$ and $2\cdot \mathbf{1}_{3}$.
\end{theorem}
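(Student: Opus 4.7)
The plan is to proceed in two phases: first verify that each of the seven listed matrices is a critical substructure of $\mathbf{K}_4$, then rule out every other $\mathbf{F}'\prec\mathbf{K}_4$ with $\operatorname{forb}(m,\mathbf{F}')=\operatorname{forb}(m,\mathbf{K}_4)=\sum_{i=0}^{3}\binom{m}{i}$. The complementation invariance $\operatorname{forb}(m,\mathbf{F})=\operatorname{forb}(m,\mathbf{F}^c)$ pairs the candidates $(\mathbf{0}_4,\mathbf{1}_4)$, $(\mathbf{I}_4,\mathbf{I}_4^c)$ and $(2\cdot\mathbf{0}_3,2\cdot\mathbf{1}_3)$, with $\mathbf{K}_4^2$ self-complementary, which halves the work.

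In the first phase, the five candidates $\mathbf{0}_4,\mathbf{I}_4,\mathbf{K}_4^2,\mathbf{I}_4^c,\mathbf{1}_4$ are exactly $\mathbf{K}_4^s$ for $s\in\{0,1,2,3,4\}$, so Theorem~\ref{conf K_k^s} gives $\operatorname{forb}(m,\mathbf{K}_4^s)=\operatorname{forb}(m,\mathbf{K}_4)$, and their minimality is supplied by Raggi's prior classification of the four-rowed critical substructures of $\mathbf{K}_4$. For $2\cdot\mathbf{0}_3$ I would use the construction consisting of all subsets of $[m]$ of size at least $m-3$: this family has $\sum_{i=0}^{3}\binom{m}{i}$ columns, and if two distinct columns $S_1,S_2$ both vanished on the same three rows $T$, then $T\subseteq S_1^c\cap S_2^c$ with $|S_j^c|\leq 3$ would force $T=S_1^c=S_2^c$ and hence $S_1=S_2$, a contradiction. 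Dually, $\binom{[m]}{\leq 3}$ avoids $2\cdot\mathbf{1}_3$. Both $2\cdot\mathbf{0}_3,2\cdot\mathbf{1}_3\prec\mathbf{K}_4$ by restricting to any three rows of $\mathbf{K}_4$. Minimality of $2\cdot\mathbf{0}_3$ is then immediate, since each proper sub-configuration has strictly smaller forb (for instance $\mathbf{0}_3$ with $\operatorname{forb}=\sum_{i=0}^{2}\binom{m}{i}$ and $2\cdot\mathbf{0}_2$ with $\operatorname{forb}=O(m^2)$, the latter by double-counting the row-pairs on which each column vanishes).

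The main obstacle lies in the second phase: excluding all other $\mathbf{F}'\prec\mathbf{K}_4$ of equal forb. A useful reduction is that any $\mathbf{F}'$ properly containing one of the seven listed candidates fails to be minimal, so it suffices to consider $\mathbf{F}'$ that avoids all seven as strict sub-configurations. Stratifying by row count, configurations on at most two rows have forb polynomial of degree at most two, well below $\Theta(m^3)$. Four-rowed sub-configurations of $\mathbf{K}_4$ are necessarily simple (since $\mathbf{K}_4$ is simple and all four rows are retained), so Raggi's prior result pins them down as the $\mathbf{K}_4^s$. The remaining delicate case is three-rowed $\mathbf{F}'\prec 2\cdot\mathbf{K}_3$ avoiding $2\cdot\mathbf{0}_3$ and $2\cdot\mathbf{1}_3$ strictly: if simple, then $\mathbf{F}'\prec\mathbf{K}_3$ with $\operatorname{forb}(m,\mathbf{F}')\leq\sum_{i=0}^{2}\binom{m}{i}$; if non-simple, then $\mathbf{F}'$ contains $2\cdot c$ for some weight-one (or, by complementation, weight-two) column $c\in\{0,1\}^3$. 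For each such residual $\mathbf{F}'$, I would rule it out via a case-specific double-counting inequality of the form $\sum_{c} w_c\binom{m-w_c}{2}\leq 3\binom{m}{3}$ (counting, in any matrix avoiding $2\cdot e_i$, pairs of columns $c$ and three-row subsets on which $c$ restricts to a weight-one pattern), to force $\operatorname{forb}(m,\mathbf{F}')=O(m^2)$. This final enumeration is the bulk of the technical work.
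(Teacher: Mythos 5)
The paper does not prove this theorem; it is quoted from Raggi's thesis, so your proposal has to stand on its own. Phase~1 is essentially sound: the identification of the five four-rowed candidates with $\mathbf{K}_4^s$ and the appeal to Theorem~\ref{conf K_k^s}, the weight-$\geq m-3$ construction for $2\cdot\mathbf{0}_3$, and the $O(m^2)$ bounds for $\mathbf{0}_3$ and $2\cdot\mathbf{0}_2$ are all correct (minimality of $\mathbf{I}_4$, $\mathbf{K}_4^2$, $\mathbf{I}_4^c$ is outsourced to Raggi's other classification rather than proved, but that is a defensible citation).

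The gap is in your phase~2 treatment of three-rowed, non-simple $\mathbf{F}'$. First, the monotonicity of $\operatorname{forb}$ runs against you: if $2\cdot\mathbf{e}_i\prec\mathbf{F}'$, then $\operatorname{forb}(m,2\cdot\mathbf{e}_i)\leq\operatorname{forb}(m,\mathbf{F}')$, so your double-counting bound for matrices avoiding $2\cdot\mathbf{e}_i$ gives a \emph{lower} bound on $\operatorname{forb}(m,\mathbf{F}')$, not the upper bound you need; a matrix avoiding $\mathbf{F}'$ is free to contain $2\cdot\mathbf{e}_i$ everywhere. Second, the conclusion you are aiming for is false. Take $\mathbf{F}'\prec 2\cdot\mathbf{K}_3$ consisting of one copy each of $\mathbf{0}_3$ and $\mathbf{1}_3$ and two copies of every column of weight $1$ or $2$ (fourteen columns); it contains neither $2\cdot\mathbf{0}_3$ nor $2\cdot\mathbf{1}_3$, so it lies in your residual case. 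The simple matrix $\mathbf{I}_{m'}\times\mathbf{I}_{m'}\times\mathbf{I}_{m'}^{c}$ with $m'=m/3$ avoids this $\mathbf{F}'$: on any three rows taken one from each block, exactly one weight-two pattern occurs with multiplicity $1$ while $\mathbf{F}'$ demands all three weight-two patterns twice, and on any three rows meeting some block twice an entire pattern is missing. Hence $\operatorname{forb}(m,\mathbf{F}')\geq (m/3)^3=\Omega(m^3)$, not $O(m^2)$. What the classification actually requires in this case is the much more delicate statement that $\operatorname{forb}(m,\mathbf{F}')$ falls \emph{strictly below} $\binom{m}{3}+\binom{m}{2}+\binom{m}{1}+\binom{m}{0}$ (possibly only in the leading constant or in lower-order terms) for every residual three-rowed configuration; this exact-bound analysis is the real content of Raggi's work and is not supplied by your outline.
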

We mention that the critical substructures for $\mathbf{K}_{5}$ have not been fully determined.

On the other hand, it is also worth considering the opposite scenario, namely, configurations $\mathbf{F}'$ that contain $\mathbf{F}$ and satisfy $\operatorname{forb}(m, \mathbf{F}') = \operatorname{forb}(m, \mathbf{F})$.
Let $\mathbf{1}_{p}\mathbf{0}_{q}$ denote the column consisting of $p$ $1$s at the top and $q$ $0$s at the bottom.
Anstee and Meehan~\cite{AM2011} showed that a column $\mathbf{1}_{p}\mathbf{0}_{q}$ can be added to $\mathbf{K}_k$ without changing the bound.
\begin{theorem}[Anstee and Meehan~\cite{AM2011}]
    Assume that $p+q=k$ and $p,q\geqslant 2$. Then
    \[
    \operatorname{forb}(m, [\mathbf{K}_k | \mathbf{1}_{p}\mathbf{0}_{q}])=\operatorname{forb}(m, \mathbf{K}_k).
    \]
\end{theorem}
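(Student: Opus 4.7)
The lower bound $\operatorname{forb}(m,\mathbf{K}_k)\leq \operatorname{forb}(m,[\mathbf{K}_k | \mathbf{1}_{p}\mathbf{0}_{q}])$ is immediate from part (ii) of the preceding proposition, since $\mathbf{K}_k$ is obtained from $[\mathbf{K}_k | \mathbf{1}_{p}\mathbf{0}_{q}]$ by deleting the extra column. For the matching upper bound my plan is to run Anstee's last-row decomposition and perform a double induction on $m$ and $k$, keeping the restriction $p,q\geq 2$ throughout.

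Given a simple $m\times N$ matrix $\mathbf{A}$ avoiding $[\mathbf{K}_k | \mathbf{1}_{p}\mathbf{0}_{q}]$, I would split its columns by the value in row $m$ and delete that row to obtain two $(m-1)$-row simple matrices: $\mathbf{F}$, the set of distinct patterns that appear at all, and $\mathbf{E}$, those patterns realized both with a $0$ and with a $1$ in row $m$; this yields $N=|\mathbf{F}|+|\mathbf{E}|$. Any configuration in $\mathbf{F}$ lifts to $\mathbf{A}$ by choosing a preimage column, so $\mathbf{F}$ also avoids $[\mathbf{K}_k | \mathbf{1}_{p}\mathbf{0}_{q}]$, and the induction on $m$ gives $|\mathbf{F}|\leq\sum_{i=0}^{k-1}\binom{m-1}{i}$. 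For $\mathbf{E}$, the doubling trick tells us that each of its columns is realized by a pair of columns of $\mathbf{A}$ agreeing on $[m-1]$ but differing in row $m$. I would use this to show that $\mathbf{E}$ avoids both $[\mathbf{K}_{k-1} | \mathbf{1}_{p-1}\mathbf{0}_{q}]$ and $[\mathbf{K}_{k-1} | \mathbf{1}_{p}\mathbf{0}_{q-1}]$: a copy of the first on rows $R'\subseteq[m-1]$, after lifting both preimages of the $2^{k-1}$ columns of $\mathbf{K}_{k-1}$ and lifting the extra $(p-1)$-ones column through its preimage carrying a $1$ in row $m$, produces in $\mathbf{A}$ all $2^k$ patterns of $\mathbf{K}_k$ on $R'\cup\{m\}$ together with a second column whose restriction to $R'\cup\{m\}$ has exactly $p$ ones; this realises $[\mathbf{K}_k | \mathbf{1}_{p}\mathbf{0}_{q}]$ in $\mathbf{A}$, contradicting the avoidance. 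The argument for the second configuration is symmetric.

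Because $p,q\geq 2$, whenever $\max(p,q)\geq 3$ at least one of the smaller parameter pairs $(p-1,q),(p,q-1)$ still satisfies both coordinates $\geq 2$, so the induction on $k$ applies to that smaller configuration and yields $|\mathbf{E}|\leq \operatorname{forb}(m-1,\mathbf{K}_{k-1})=\sum_{i=0}^{k-2}\binom{m-1}{i}$; combining with the bound on $|\mathbf{F}|$ and invoking Pascal's identity closes the induction. The main obstacle will be the genuine base case $p=q=2$ (and hence $k=4$), where the induction on $k$ stalls because both smaller configurations $[\mathbf{K}_3 | \mathbf{1}_1\mathbf{0}_2]$ and $[\mathbf{K}_3 | \mathbf{1}_2\mathbf{0}_1]$ have a coordinate equal to $1$; the small example $[\mathbf{K}_2 | \mathbf{1}_1\mathbf{0}_1]$, for which one can check $\operatorname{forb}(m,[\mathbf{K}_2 | \mathbf{1}_1\mathbf{0}_1])\geq m+2>\operatorname{forb}(m,\mathbf{K}_2)$, confirms that the equality really does fail as soon as $\min(p,q)=1$, so no shortcut via a stronger induction hypothesis is available. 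For the $k=4$ base case I would run a second round of the decomposition on $\mathbf{E}$ itself and exploit the simultaneous avoidance of the complementary pair $[\mathbf{K}_3 | \mathbf{1}_1\mathbf{0}_2]$ and $[\mathbf{K}_3 | \mathbf{1}_2\mathbf{0}_1]$---which forces every shattered $3$-set in $\mathbf{E}$ to have exactly one column of each of the six intermediate weights---to recover the required $|\mathbf{E}|\leq\sum_{i=0}^{2}\binom{m-1}{i}$ by a careful accounting of the columns of extremal weight on any such $3$-set.
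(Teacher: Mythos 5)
This survey only states the Anstee--Meehan theorem and cites \cite{AM2011}; it contains no proof to compare against, so I assess your argument on its own terms. Your main induction is correct and complete: the decomposition $N=\|\mathbf F\|+\|\mathbf E\|$, the lifting argument showing that $\mathbf F$ avoids $[\mathbf{K}_k\,|\,\mathbf{1}_{p}\mathbf{0}_{q}]$ and that $\mathbf E$ avoids both $[\mathbf{K}_{k-1}\,|\,\mathbf{1}_{p-1}\mathbf{0}_{q}]$ and $[\mathbf{K}_{k-1}\,|\,\mathbf{1}_{p}\mathbf{0}_{q-1}]$, and the observation that for $\max(p,q)\geqslant 3$ one of these inherits $p,q\geqslant 2$, all check out, and Pascal's identity closes the step. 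You also correctly identify $p=q=2$, $k=4$ as the one genuine base case and correctly note (with a valid example for $[\mathbf{K}_2\,|\,\mathbf{1}_1\mathbf{0}_1]$) that the hypothesis $\min(p,q)\geqslant 2$ cannot be weakened. This matches the ``repeated induction'' strategy that Anstee and Meehan themselves use.

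One caution about how you plan to finish the base case. The version phrased as ``a careful accounting of the columns of extremal weight on any such 3-set'' will not close on its own: if you fix a single shattered triple $R_0$ of $\mathbf E$ and write $\|\mathbf E\|=6+a+b$ with $a$ (resp.\ $b$) the columns that are constant $0$ (resp.\ $1$) on $R_0$, each of these two classes is only constrained to be a simple $(m-4)$-row matrix avoiding the same pair of configurations, so the best you get this way is $a+b\leqslant 2\sum_{i=0}^{2}\binom{m-4}{i}$, which exceeds $\sum_{i=0}^{2}\binom{m-1}{i}$ for large $m$. The other route you mention, a second application of the decomposition to $\mathbf E$ itself, does work and is the right one: writing $\|\mathbf E\|=\|\mathbf F'\|+\|\mathbf E'\|$, the inner matrix $\mathbf E'$ must avoid $[\mathbf{K}_2\,|\,v]$ for \emph{every} $2$-rowed column $v$ (lifting a repeated $v$ through whichever preimage adjusts its weight to $1$ or $2$ produces one of the two forbidden $3$-rowed configurations in $\mathbf E$). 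Hence either no pair of rows of $\mathbf E'$ is shattered, giving $\|\mathbf E'\|\leqslant\operatorname{forb}(m-2,\mathbf{K}_2)=m-1$, or some pair is shattered with all four patterns occurring exactly once, forcing $\|\mathbf E'\|=4$. Combined with $\|\mathbf F'\|\leqslant\sum_{i=0}^{2}\binom{m-2}{i}$ from the induction on the number of rows, this gives $\|\mathbf E\|\leqslant\sum_{i=0}^{2}\binom{m-2}{i}+(m-1)=\sum_{i=0}^{2}\binom{m-1}{i}$, exactly what you need. With that substitution your outline is a complete and correct proof.
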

Let $\mathbf{A}$ be an $m_{1}\times n_{1}$ matrix and $\mathbf{B}$ be an $m_{2}\times n_{2}$ matrix. Let $\mathbf{A}\times \mathbf{B}$ denote the $(m_{1}+m_{2})\times (n_{1}n_{2})$ matrix with all possible columns formed from one column from $\mathbf{A}$ placed on top of one column from $\mathbf{B}$. We let $\mathbf{K}_{2}^{T}$ be the following matrix:
\[
\mathbf{K}_{2}^{T}=\begin{pmatrix}
 1 &1 \\
 1 &0 \\
 0 &1 \\
 0 &0
\end{pmatrix}.
\]
Anstee and Nikov~\cite{AN2022} also proved the following theorem.
\begin{theorem}[Anstee and Nikov~\cite{AN2022}]
    Assume $k\geqslant 4$ and $t\geqslant 1$. There exists an $m_{k,t}$ such that for $m\geqslant m_{k,t}$ we have
    \[
    \operatorname{forb}(m, [\mathbf{K}_k | t\cdot (\mathbf{K}_{2}^{T}\times \mathbf{K}_{k-4})])=\operatorname{forb}(m, \mathbf{K}_k).
    \]
\end{theorem}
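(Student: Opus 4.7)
The plan is to prove the two directions of the equality separately. The lower bound $\operatorname{forb}(m, [\mathbf{K}_k | t\cdot (\mathbf{K}_{2}^{T}\times \mathbf{K}_{k-4})])\geqslant \operatorname{forb}(m, \mathbf{K}_k)$ is immediate: since $\mathbf{K}_k$ is itself a sub-configuration of the larger matrix, any simple matrix avoiding $\mathbf{K}_k$ automatically avoids $[\mathbf{K}_k | t\cdot (\mathbf{K}_{2}^{T}\times \mathbf{K}_{k-4})]$. The work lies entirely in the reverse inequality: I would show that every simple $m$-rowed matrix $\mathbf{A}$ with $|\mathbf{A}|>\operatorname{forb}(m,\mathbf{K}_k)$ already contains $[\mathbf{K}_k | t\cdot (\mathbf{K}_{2}^{T}\times \mathbf{K}_{k-4})]$ as a configuration, once $m\geqslant m_{k,t}$.

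The natural route is induction on $m$ via Anstee's standard decomposition. Fix a row $r$ of $\mathbf{A}$ and partition its columns as $[\mathbf{B}\,|\,\mathbf{C}_0\,|\,\mathbf{C}_1\,|\,\mathbf{D}]$, where $\mathbf{C}_0$ and $\mathbf{C}_1$ consist of columns that come in pairs differing only in row $r$, while $\mathbf{B}$ and $\mathbf{D}$ contain the unpaired columns with entry $0$ and $1$ in row $r$ respectively. After deleting row $r$, the matrix $[\mathbf{B}\,|\,\mathbf{C}_0\,|\,\mathbf{D}]$ is simple on $m-1$ rows, and the induction hypothesis yields a bound of $\operatorname{forb}(m-1,\mathbf{K}_k)$ on $|\mathbf{B}|+|\mathbf{C}_0|+|\mathbf{D}|$. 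Combined with the identity $|\mathbf{A}|=|\mathbf{B}|+2|\mathbf{C}_0|+|\mathbf{D}|$, the task reduces to bounding $|\mathbf{C}_0|$ by $\operatorname{forb}(m-1,\mathbf{K}_{k-1})$, after which the Pascal identity $\binom{m-1}{j}+\binom{m-1}{j-1}=\binom{m}{j}$ telescopes everything into $\operatorname{forb}(m,\mathbf{K}_k)$. To get such a bound on $|\mathbf{C}_0|$, I would select $r$ so as to play one of the two \emph{asymmetric} roles inside $\mathbf{K}_{2}^{T}$ (rows $2$ or $3$, whose entries differ in the two columns of $\mathbf{K}_{2}^{T}$), and argue that the automatic doubling between $\mathbf{C}_0$ and $\mathbf{C}_1$ promotes any $\mathbf{K}_{k-1}$-like structure inside $\mathbf{C}_0$ into the richer configuration $[\mathbf{K}_k | t\cdot (\mathbf{K}_{2}^{T}\times \mathbf{K}_{k-4})]$ inside $\mathbf{A}$.

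The main obstacle is that $\mathbf{K}_{2}^{T}$ contains only $2$ of the $6$ weight-$2$ columns on $4$ rows, so the forbidden configuration induced on $\mathbf{C}_0$ is not a clean $\mathbf{K}_{k-1}$ but a more intricate object, and pinning down precisely which configuration must be avoided in $\mathbf{C}_0$ requires a case analysis depending on the role of $r$ among the four ``top'' rows. Moreover, because $t$ copies are required of each of the $2^{k-3}$ columns of $\mathbf{K}_{2}^{T}\times \mathbf{K}_{k-4}$, naive use of the doubling trick produces only one extra copy per application, so one likely has to iterate the decomposition on several rows in succession, or introduce a multi-row variant of the pairing, to generate all $t$ copies simultaneously. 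Finally, $m_{k,t}$ must be taken large enough to absorb the lower-order correction terms that accumulate through this case analysis; I expect this threshold to grow fairly quickly with both $k$ and $t$, but to leave the leading Sauer--Shelah term $\sum_{i=0}^{k-1}\binom{m}{i}$ untouched, which is exactly what the statement demands.
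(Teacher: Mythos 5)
First, a remark on context: this survey states the Anstee--Nikov theorem with a citation but gives no proof of it, so there is nothing in the paper to compare your argument against; I can only assess the proposal on its own terms. Your lower bound is correct and essentially content-free: $[\mathbf{K}_k \mid t\cdot (\mathbf{K}_{2}^{T}\times \mathbf{K}_{k-4})]$ contains $\mathbf{K}_k$ as a configuration, so $\operatorname{forb}(m,\mathbf{K}_k)\leqslant \operatorname{forb}(m,[\mathbf{K}_k \mid t\cdot (\mathbf{K}_{2}^{T}\times \mathbf{K}_{k-4})])$ by the monotonicity proposition already recorded in Section~\ref{sec:forbidden_config}. The standard decomposition $[\mathbf{B}\,|\,\mathbf{C}_0\,|\,\mathbf{C}_1\,|\,\mathbf{D}]$ and the Pascal telescoping are also the right general framework for such results.

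The genuine gap is that the entire content of the theorem lives in the two steps you explicitly leave open. (i) You never identify the configuration that $\mathbf{C}_0$ must avoid, nor show that its forb function is at most $\operatorname{forb}(m-1,\mathbf{K}_{k-1})$. Note that $|\mathbf{C}_0|>\operatorname{forb}(m-1,\mathbf{K}_{k-1})$ by itself only yields a $\mathbf{K}_{k-1}$ in the repeated columns and hence a $\mathbf{K}_k$ in $\mathbf{A}$ --- which you already have for free --- not the $t\cdot(\mathbf{K}_{2}^{T}\times \mathbf{K}_{k-4})$ attachment on the \emph{same} $k$ rows; producing those extra columns is exactly the hard part, and your ``promotes any $\mathbf{K}_{k-1}$-like structure into the richer configuration'' is an assertion, not an argument. (ii) The clean telescoping you describe, if it worked at every step, would prove the equality for \emph{all} $m$ with no threshold, contradicting the presence of $m_{k,t}$ in the statement (and the augmented configuration genuinely has larger forb for small $m$). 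So either the inductive step must produce correction terms --- in which case the telescoping no longer gives exact equality --- or the base case $m=m_{k,t}$ carries the full difficulty of the theorem and cannot be dispatched by the induction. Your plan does not say where $m_{k,t}$ actually enters, and this is not a technicality: results of this type are usually completed by a separate stability or counting argument (e.g., exploiting that a matrix exceeding $\operatorname{forb}(m,\mathbf{K}_k)$ contains many copies of $\mathbf{K}_k$ on many row sets for large $m$), which is absent here. As it stands, this is a plausible plan with the decisive lemmas missing, not a proof.
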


\section{Tur\'{a}n Numbers for Traces}\label{sec 8}

Tur\'{a}n's theorem~\cite{T1941}, a cornerstone of extremal graph theory, determines the Tur\'{a}n numbers for all cliques. 
Building on this, Tur\'{a}n-type problems seek to determine $\operatorname{ex}(n, F)$ for arbitrary graphs $F$, a framework that naturally extends to hypergraphs. 
Specifically, for a family $\mathfrak{G}$ of $r$-uniform hypergraphs, $\operatorname{ex}_r(n, \mathfrak{G})$ represents the maximum number of hyperedges in an $n$-vertex $r$-uniform hypergraph that avoids all members of $\mathfrak{G}$. 
However, for general $r$, this problem remains open and is widely regarded as highly challenging. 
Therefore, researchers have focused on more tractable variants of the problem. 
A notable example is the \emph{Berge copy}, a concept due to Berge~\cite{B1987}.
The following formal definition can be found in Gerbner and Palmer~\cite{GP2017}.

\begin{definition}
Let $\mathcal{F}$ and $\mathcal{T}$ be uniform hypergraphs (possibly with different uniformities) with $V(\mathcal{F})\subseteq V(\mathcal{T})$. We say that $\mathcal{T}$ is a \emph{Berge copy} of $\mathcal{F}$ if there exists a bijection $\phi:E(\mathcal{F})\to E(\mathcal{T})$ such that for every edge $e\in E(\mathcal{F})$, we have $e\subseteq \phi(e)$. Moreover, we say that $\mathcal{T}$ is an \emph{induced Berge copy} of $\mathcal{F}$ (or a trace of $\mathcal{F}$) if there exists a bijection $\phi:E(\mathcal{F})\to E(\mathcal{T})$ such that for every edge $e\in E(\mathcal{F})$, we have $\phi(e)\cap V(\mathcal{F})=e$.
\end{definition}
The following establishes a connection between induced Berge copies and traces of families.
\begin{proposition}
    If $\mathcal{T}$ is an induced Berge copy of $\mathcal{F}$, then we have $\mathcal{T}_{\mid V(\mathcal{F})}=\mathcal{F}$.
\end{proposition}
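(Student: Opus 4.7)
The plan is to unwind the definition of induced Berge copy and compare it directly with the definition of trace. Both sides of the claimed equality are collections of subsets of $V(\mathcal{F})$, so it suffices to check mutual inclusion after rewriting each in terms of the bijection $\phi: E(\mathcal{F}) \to E(\mathcal{T})$.

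First I would recall that, identifying a hypergraph with its edge set, the trace is
\[
\mathcal{T}_{\mid V(\mathcal{F})} = \{\, T \cap V(\mathcal{F}) : T \in E(\mathcal{T})\,\}.
\]
Since $\phi$ is a bijection from $E(\mathcal{F})$ onto $E(\mathcal{T})$, every edge $T \in E(\mathcal{T})$ can be written uniquely as $T = \phi(e)$ for some $e \in E(\mathcal{F})$, so the right-hand side equals $\{\phi(e) \cap V(\mathcal{F}) : e \in E(\mathcal{F})\}$.

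Next I would invoke the defining property of an induced Berge copy, namely $\phi(e) \cap V(\mathcal{F}) = e$ for every $e \in E(\mathcal{F})$. Substituting this into the previous display yields
\[
\mathcal{T}_{\mid V(\mathcal{F})} = \{\,e : e \in E(\mathcal{F})\,\} = E(\mathcal{F}) = \mathcal{F},
\]
which is the desired conclusion.

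There is essentially no obstacle here beyond bookkeeping: the only subtle point is the mild abuse of notation whereby $\mathcal{F}$ and $\mathcal{T}$ are simultaneously treated as hypergraphs (with vertex sets) in the hypothesis $V(\mathcal{F}) \subseteq V(\mathcal{T})$ and as their edge sets on the right-hand side of $\mathcal{T}_{\mid V(\mathcal{F})} = \mathcal{F}$. Once this identification is made explicit, the proof reduces to the two-line substitution above, and the bijectivity of $\phi$ ensures that no element of $\mathcal{F}$ is missed and no spurious trace appears.
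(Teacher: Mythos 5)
Your proof is correct and matches the paper's (implicit) justification: the proposition is stated there as an immediate consequence of the definitions, and your two-line substitution using the bijectivity of $\phi$ and the defining property $\phi(e)\cap V(\mathcal{F})=e$ is exactly that argument made explicit.
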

To proceed, we require the following definition provided by Luo and Spiro~\cite{LS2021}.
\begin{definition}
    Given a hypergraph $\mathcal{F}$, let $\operatorname{B}_{r}(\mathcal{F})$ denote the set of all $r$-uniform hypergraphs that are Berge copies of $\mathcal{F}$ up to isomorphism. Similarly, let $\operatorname{Tr}_{r}(\mathcal{F})$ denote the set of all $r$-uniform induced Berge copies of $\mathcal{F}$ up to isomorphism.
\end{definition}
It is evident that $\operatorname{Tr}_{r}(\mathcal{F})\subseteq \operatorname{B}_{r}(\mathcal{F})$. 
Taking an example, if $\mathcal{F}=K_{3}$ and $V(\mathcal{F})=[3]$, then for $r=3$ we have
\begin{align*}
&\operatorname{Tr}_{3}(K_{3})=\{\{124,134,234\}, \{124,134,235\}, \{124,135,236\}\}, \mbox{ and } \\
&\operatorname{B}_{3}(K_{3})=\{\{124,134,234\}, \{124,134,235\}, \{124,135,236\}, \{123,134,235\}\}.
\end{align*}

We also define the Tur\'{a}n number for these graph-based hypergraphs.
\begin{definition}
    Assume that $F$ is a graph, let $\operatorname{ex}(n,\operatorname{B}_{r}(F))$ (resp. $\operatorname{ex}(n,\operatorname{Tr}_{r}(F))$) be the maximum number of edges of an $n$-vertex $r$-uniform hypergraph that does not contain a subhypergraph isomorphic to a Berge copy (resp. an induced Berge copy) of $F$.
\end{definition}

The Tur\'{a}n problem for graph-based hypergraphs is closely related to the so-called \emph{generalized Tur\'{a}n problems}, which were first pioneered by Erd\H{o}s~\cite{E1962}. 
The generalized Tur\'{a}n problems were systematically studied by Alon and Shikhelman~\cite{AS2016}. 
\begin{definition}
    Let $F$ be a graph. Given two graphs $H$ and $G$, let $\mathcal{N}(H,G)$ be the number of copies of $H$ contained in $G$. We let $\operatorname{ex}(n,H,F)=\max\{\mathcal{N}(H,G):G \mbox{ is an } n\mbox{-vertex } F\mbox{-free graph}\}$.
\end{definition}
For a graph $G$, we define $\mathcal{N}_{r}(G)$ to be the $r$-uniform hypergraph that consists of all $r$-element vertex sets which span a copy of $K_{r}$ in $G$. Obviously, we have $\operatorname{ex}(n,K_{r},F)=\max\{|\mathcal{N}_{r}(G)|:G \mbox{ is }F\mbox{-free}\}$. If $G$ is $F$-free, then $\mathcal{N}_{r}(G)$ is $\operatorname{B}_{r}(F)$-free, implying $|\mathcal{N}_{r}(G)|\leqslant \operatorname{ex}(n,\operatorname{B}_{r}(F))$. Therefore, we can get the following relationship between these three functions:
\[
\operatorname{ex}(n,K_{r},F)\leqslant \operatorname{ex}(n,\operatorname{B}_{r}(F))\leqslant \operatorname{ex}(n,\operatorname{Tr}_{r}(F)).
\]
Sali and Spiro~\cite{SS2017} determined the order of magnitude of $\operatorname{ex}(n,\operatorname{Tr}_{r}(K_{s,t}))$ when $t\geqslant (s-1)!+1$ and $s\leqslant 2r-4$.
Later, F\"{u}redi and Luo~\cite{FL2023} determined the order of magnitude of $\operatorname{ex}(n,\operatorname{Tr}_{r}(F))$ for all graphs $F$ in terms of the generalized Tur\'{a}n numbers.
\begin{theorem}[F\"{u}redi and Luo~\cite{FL2023}]
Suppose that $r\geqslant 2$, $F$ is a graph with $E(F)\ne \emptyset$. Then, as $n\to \infty$, we have
\[
\operatorname{ex}(n,\operatorname{Tr}_{r}(F))=\Theta (\max_{2\leqslant s\leqslant r}\{\operatorname{ex}(n,K_{s},F)\}).
\]
\end{theorem}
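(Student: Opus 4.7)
The plan is to prove the theorem by establishing matching lower and upper bounds, using different techniques in each direction. The lower bound will be a direct construction refining the known inequality $\operatorname{ex}(n, K_r, F) \leq \operatorname{ex}(n, \operatorname{Tr}_r(F))$ to cover all $s \in \{2, \dots, r\}$, while the upper bound is the substantive direction and requires a decomposition argument driven by shadow-graph analysis.

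For the lower bound, I would show that for each $s \in \{2, \dots, r\}$ there exists a $\operatorname{Tr}_r(F)$-free $r$-uniform hypergraph on $n$ vertices with $\Omega(\operatorname{ex}(n, K_s, F))$ hyperedges; maximizing over $s$ then gives the claimed inequality. Let $G$ be an extremal $F$-free graph on $n - (r-s)$ vertices realizing $\operatorname{ex}(n-(r-s), K_s, F)$ copies of $K_s$, and let $X$ be a disjoint set of $r-s$ fresh vertices. Define $\mathcal{H}_s = \{K \cup X : K \text{ is a copy of } K_s \text{ in } G\}$; by construction $|E(\mathcal{H}_s)|$ has the right order of magnitude. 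To verify that $\mathcal{H}_s$ avoids $\operatorname{Tr}_r(F)$, suppose for contradiction that a trace of $F$ occurs on $U \subseteq V(\mathcal{H}_s)$ with $|U| = |V(F)|$. For each edge $e \in E(F) \subseteq \binom{U}{2}$, the corresponding hyperedge $K_e \cup X$ satisfies $(K_e \cup X) \cap U = e$, forcing $X \cap U \subseteq e$. If $X \cap U = \emptyset$, then $K_e \cap U = e$ for every $e$, so every edge of $F$ lies inside $G[U \setminus X]$, contradicting the $F$-freeness of $G$. Otherwise $X \cap U \neq \emptyset$ forces $F$ to be a star, a degenerate case handled by a separate construction of a hypergraph of bounded maximum degree.

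For the upper bound, let $\mathcal{H}$ be an $r$-uniform $\operatorname{Tr}_r(F)$-free hypergraph on $[n]$, and set $\ell = |V(F)|$. The plan is to distribute $E(\mathcal{H})$ among classes $\mathcal{H}_2, \dots, \mathcal{H}_r$ with $|\mathcal{H}_s| = O(\operatorname{ex}(n, K_s, F))$, so that summing yields the stated bound. Choose thresholds $M_r \gg M_{r-1} \gg \cdots \gg M_2$ depending on $r$ and $F$. Process $s$ from $r$ down to $2$, and at stage $s$ construct an auxiliary graph $G_s$ on $V(\mathcal{H})$ by setting $\{u, v\} \in E(G_s)$ iff some $s$-subset $S \supseteq \{u, v\}$ has link (in the currently unprocessed hypergraph) of size at least $M_s$. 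The crucial claim is that $G_s$ is $F$-free: assuming $F$ embeds in $G_s$ on some $U$, for each edge $\{u_i, u_j\} \in E(F)$ one picks a witnessing $s$-subset $S_{ij}$; because only $O(n^{s-3})$ choices of $S_{ij}$ can hit $U \setminus \{u_i, u_j\}$, one may refine to $S_{ij} \cap U = \{u_i, u_j\}$. A greedy selection inside each link (dominating the $O(n^{r-s-1})$ ``bad'' hyperedges through a fixed $s$-subset by choosing $M_s$ accordingly) then produces distinct hyperedges $h_{ij}$ with $h_{ij} \cap U = \{u_i, u_j\}$ exactly, forming a trace of $F$ --- a contradiction. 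Thus $G_s$ is $F$-free and contains at most $\operatorname{ex}(n, K_s, F)$ copies of $K_s$; the hyperedges captured at stage $s$ are then controlled by this count via a double-counting argument using the threshold $M_s$.

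The main obstacle will be calibrating the thresholds $M_s$ simultaneously so that both the greedy trace-finding step and the charging of hyperedges to $K_s$-cliques succeed with matching orders of magnitude. Processing stages from $s = r$ downwards is essential: it eliminates denser structures first, so that at stage $s$ the link sizes of $s$-subsets are either large enough to witness $K_s$-cliques in $G_s$ or small enough to contribute negligibly. Beyond this calibration, the bookkeeping around how many hyperedges a single $K_s$-clique of $G_s$ can account for, and the precise bound on the residual hyperedges at stage $s = 2$ (which will be absorbed by $O(\operatorname{ex}(n, F)) = O(\operatorname{ex}(n, K_2, F))$), together form the technical core of the argument.
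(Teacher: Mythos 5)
The survey states this theorem as a cited result of F\"uredi and Luo without proof, so there is no in-paper argument to compare against; I am judging your proposal on its own merits. Your lower bound is correct: padding the $K_s$-copies of an extremal $F$-free graph by a fixed $(r-s)$-set $X$ is the right construction, the case analysis on $X\cap U$ is sound, and you correctly isolate the star case (where $X\cap U\neq\emptyset$ genuinely can produce induced Berge stars) and handle it by a bounded-degree hypergraph.

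The upper bound, however, has two genuine gaps at exactly the steps you defer. First, the $F$-freeness of $G_s$: an edge $\{u_i,u_j\}\in E(G_s)$ is witnessed by \emph{some} heavy $s$-set $S_{ij}$, and the refinement ``only $O(n^{s-3})$ choices of $S_{ij}$ hit $U\setminus\{u_i,u_j\}$, so we may assume $S_{ij}\cap U=\{u_i,u_j\}$'' is a non sequitur: the witnesses need not be generic, and it may happen that the \emph{only} heavy witness contains a third vertex of $U$, in which case every hyperedge in its link meets $U$ outside $\{u_i,u_j\}$ and no trace of $F$ is produced. Second, and more seriously, the counting does not close: knowing that the heavy $s$-sets span at most $\operatorname{ex}(n,K_s,F)$ cliques of $G_s$ does not bound the number of hyperedges containing them, since a single heavy $s$-set can lie in up to $\binom{n-s}{r-s}$ hyperedges. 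The top-down processing only caps the residual link of a heavy $s$-set by roughly $n\cdot M_{s+1}$ (via its $(s+1)$-supersets being light), so the scheme as described yields $|\mathcal{H}_s|=O\bigl(nM_{s+1}\cdot\operatorname{ex}(n,K_s,F)\bigr)$, off by a polynomial factor. The known arguments resolve this by charging each hyperedge, with bounded multiplicity, to an $s$-subset of \emph{small} residual degree (a greedy selection in the style of the Berge--Tur\'an lemmas, or F\"uredi's delta-system method), reserving large links and sunflower cores solely for manufacturing the induced Berge copy; your plan asks the heavy sets to play both roles, and that is where it breaks.
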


The first to study forbidden induced Berge copies of graphs were Mubayi and Zhao~\cite{MZ2007}. They determined the asymptotic value of $\operatorname{ex}(n, \operatorname{Tr}_r(K_s))$ for $s \in \{3, 4\}$ or $r \in \{s, s+1\}$, as follows. 

\begin{theorem}[Mubayi and Zhao~\cite{MZ2007}]
Suppose that $s\in \{3,4\}$ or $r\in \{s,s+1\}$. Then
\[
\operatorname{ex}(n,\operatorname{Tr}_{r}(K_{s}))=\left(\frac{n}{s-1}\right)^{s-1}+o(n^{s-1}).
\]
\end{theorem}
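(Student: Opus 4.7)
The plan is to establish matching asymptotic bounds $\operatorname{ex}(n,\operatorname{Tr}_{r}(K_{s}))=(n/(s-1))^{s-1}+o(n^{s-1})$ via a transversal construction for the lower bound and a codegree-plus-shadow argument for the upper bound.

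\textbf{Lower bound.} Partition $[n]$ into $s-1$ nearly equal parts $V_{1},\ldots,V_{s-1}$ and fix an auxiliary padding set $W$ disjoint from the parts with $|W|=r-(s-1)\in\{0,1,2\}$ (empty when $r=s-1$). Let $\mathcal{T}=\{\{v_{1},\ldots,v_{s-1}\}\cup W : v_{i}\in V_{i}\}$; then $|\mathcal{T}|=\prod_{i}|V_{i}|=(n/(s-1))^{s-1}(1+o(1))$. For any $s$-subset $S\subseteq [n]$, pigeonhole places two elements of $S$ in a common part $V_{i}$, and no hyperedge of $\mathcal{T}$ intersects $S$ in precisely that pair because every hyperedge meets $V_{i}$ in a single transversal vertex; hence $\mathcal{T}$ is $\operatorname{Tr}_{r}(K_{s})$-free. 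The same bound also follows abstractly from the chain $\operatorname{ex}(n,K_{s-1},K_{s})\leqslant\operatorname{ex}(n,\operatorname{B}_{r}(K_{s}))\leqslant\operatorname{ex}(n,\operatorname{Tr}_{r}(K_{s}))$ recorded in the excerpt, combined with the known generalized Tur\'an value $\operatorname{ex}(n,K_{s-1},K_{s})=(n/(s-1))^{s-1}+o(n^{s-1})$.

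\textbf{Upper bound.} Let $\mathcal{T}$ be $\operatorname{Tr}_{r}(K_{s})$-free on $[n]$. For each pair $\{u,v\}$ denote the codegree $d(u,v)=|\{e\in\mathcal{T}:\{u,v\}\subseteq e\}|$ and form the auxiliary graph $G=\{\{u,v\}: d(u,v)\geqslant 1\}$. I would proceed in three steps. First, show that $G$ is essentially $K_{s}$-free: any $K_{s}$-copy of $G$ on vertices $S=\{v_{1},\ldots,v_{s}\}$ yields hyperedges $e_{ij}\supseteq\{v_{i},v_{j}\}$, and after discarding a vanishing fraction of ``bad'' hyperedges one arranges $e_{ij}\cap S=\{v_{i},v_{j}\}$, producing the forbidden $\operatorname{Tr}_{r}(K_{s})$; Tur\'an's theorem then forces $|E(G)|\leqslant (1-1/(s-1))\binom{n}{2}+o(n^{2})$. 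Second, by Zykov's extension of Tur\'an (or Kruskal--Katona) the number of $(s-1)$-cliques in such a $G$ is at most $(n/(s-1))^{s-1}+o(n^{s-1})$. Third, bound $|\mathcal{T}|$ by this $(s-1)$-clique count, using that in the regime $r\in\{s-1,s,s+1\}$ each hyperedge is determined up to $O(1)$ choices by a canonical $(s-1)$-clique in its $2$-shadow.

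\textbf{Main obstacle.} The heart of the proof is the first step: ensuring that a $K_{s}$-copy in $G$ can be upgraded to a genuine induced Berge $K_{s}$ by choosing hyperedges $e_{ij}$ with $e_{ij}\cap S=\{v_{i},v_{j}\}$ simultaneously for all $\binom{s}{2}$ pairs. A single hyperedge witnessing a pair $\{v_{i},v_{j}\}$ may accidentally contain another $v_{k}\in S$, spoiling the induced condition, and coordinating the selection across all pairs is where the regime restrictions enter. The hypotheses $s\in\{3,4\}$ or $r\in\{s,s+1\}$ are exactly what keep this coordination tractable: small $s$ bounds the number of pairs to handle, while $r$ close to $s$ leaves each hyperedge with few ``spare'' slots in which an unwanted $v_{k}$ can hide. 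Outside this regime the coordination genuinely breaks down, consistent with the fact that the asymptotics of $\operatorname{ex}(n,\operatorname{Tr}_{r}(K_{s}))$ for larger $s$ and $r$ are left open by Mubayi and Zhao.
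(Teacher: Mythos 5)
The survey states this theorem of Mubayi and Zhao without proof, so there is no in-paper argument to compare against; judged on its own terms, your proposal gets the lower bound essentially right but leaves the upper bound with genuine gaps. On the lower bound, the construction (a complete $(s-1)$-partite transversal family whose edges all contain a common padding set $W$ of size $r-s+1$) is the correct extremal example---for $s=3$ it is exactly the construction behind Gerbner's exact value $\operatorname{ex}(n,\operatorname{Tr}_{r}(K_{3}))=\lfloor (n-r+2)^{2}/4\rfloor$ quoted in Section 9---but you should partition $[n]\setminus W$ rather than $[n]$, note that $|W|=r-s+1$ is not confined to $\{0,1,2\}$ when $s\in\{3,4\}$ and $r$ is large, and treat the case $S\cap W\neq\emptyset$ separately: there the pigeonhole step fails, but freeness still holds because every hyperedge contains all of $W$, so no hyperedge can meet $S$ in a pair that avoids a vertex of $S\cap W$.

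The upper bound is where the proof actually lives, and two steps of your outline do not go through as written. First, a copy of $K_{s}$ in the positive-codegree graph $G$ does not yield an induced Berge copy: it can happen that \emph{every} hyperedge through $\{v_{i},v_{j}\}$ also contains a third vertex of $S$, and discarding a ``vanishing fraction'' of hyperedges cannot repair this for that particular $S$. What is needed is a supersaturation or codegree-dichotomy argument showing that if $G$ has more than $\operatorname{ex}(n,K_{s})+o(n^{2})$ edges, then \emph{some} $s$-set admits a simultaneous, distinct, induced choice of all $\binom{s}{2}$ representative hyperedges; you correctly identify this as the main obstacle but do not resolve it. Second, the closing claim that each hyperedge is ``determined up to $O(1)$ choices by a canonical $(s-1)$-clique in its $2$-shadow'' is false in general: the number of hyperedges whose shadow contains a fixed $(s-1)$-clique can be of order $n^{r-s+1}$, so $|\mathcal{T}|$ is not bounded by a constant multiple of the $K_{s-1}$-count of $G$ without a separate argument that uses the forbidden-trace condition to control high-multiplicity pairs. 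These two points are precisely where the hypotheses $s\in\{3,4\}$ or $r\in\{s,s+1\}$ must do their work, and the proposal only speculates about how they would enter.
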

Mubayi and Zhao~\cite{MZ2007} also conjectured that $\operatorname{ex}(n,\operatorname{Tr}_{r}(K_{s}))\sim (\frac{n}{s-1})^{s-1}$ holds for $s\geqslant 5$. 
In fact, they conjectured an extremal construction for $\operatorname{ex}(n,\operatorname{Tr}_{r}(K_{s}))$ for sufficiently large $n$ when $r\geqslant s$ and they proved this for the case $r=3$. Gerbner~\cite{G2023} confirmed this conjecture for the case $s=3$.
He~\cite{G2023} also extended the asymptotic bound to book graphs $K_{1,1,t}$ in the $3$-uniform case, which gave that $\operatorname{ex}(n,\operatorname{Tr}_{r}(K_{1,1,t}))=(1+o(1))n^{2}/4$.
The research of $\operatorname{ex}(n,\operatorname{Tr}_{r}(F))$ for specific graphs $F$, such as stars and $K_{2,t}$, can be found in \cite{FL2023,LS2021,QG2022,G2023}.

\section{Open Problems}\label{sec 9}

\subsection{Generalizing Theorem \ref{(4,13)}}
Recall that $\M(n,a,b)=\min \{m:(n,m)\rightarrow (a,b)\}$. The following construction given by Frankl and Wang~\cite{FW2024} shows that $\M(n,\ell+1,2^{\ell+1}-2^{\ell-1}+1)>\prod_{0\leqslant i<\ell}\left \lfloor \frac{n+\ell+i}{\ell} \right \rfloor$ for any positive integer $\ell$.
\begin{construction}
		Let $\ell$ be a positive integer. Let $X_{0},\dots ,X_{\ell-1}$ form a partition of $[n]$ with $|X_{i}|=\left \lfloor \frac{n+i}{\ell} \right \rfloor$ for $0\leqslant i<\ell$. Define
\[
\mathcal{F}(n,\ell)=\{F\subseteq [n]: |F\cap X_{i}|\leqslant 1, 0\leqslant i<\ell\}.
\]
\end{construction}
It is easy to see that $|\mathcal{F}(n,\ell)|=\prod_{0\leqslant i<\ell}\left \lfloor \frac{n+\ell+i}{\ell} \right \rfloor$. For any $T\in \binom{[n]}{\ell+1}$, by Pigeonhole Principle, there exist two vertices $x,y\in [n]$ such that $x,y\in X_{i_{0}}$ for some $0\leqslant i_{0}<\ell$. Since $\mathcal{F}(n,\ell)_{\mid T}\subseteq 2^{T}\setminus \{F\subseteq T:F\supseteq \{x,y\}\}$, we can conclude that $|\mathcal{F}(n,\ell)_{\mid T}|\leqslant 2^{\ell+1}-2^{\ell-1}$. Therefore, for any positive integer $\ell$, this construction shows that
\[
\left(n,\prod_{0\leqslant i<\ell}\left \lfloor \frac{n+\ell+i}{\ell} \right \rfloor\right)\not\rightarrow (\ell+1,2^{\ell+1}-2^{\ell-1}+1),
\]
\noindent providing the desired lower bound for $\M(n,\ell+1,2^{\ell+1}-2^{\ell-1}+1)$. 
Frankl and Wang~\cite{FW2024} made the following conjecture, which asks whether the above lower bound is best possible.
\begin{conjecture}[Frankl and Wang~\cite{FW2024}]\label{l+1}
		For any positive integers $n>\ell>0$, we have
\[
\left(n,1+\prod_{0\leqslant i<\ell}\left \lfloor \frac{n+\ell+i}{\ell} \right \rfloor\right)\rightarrow (\ell+1,2^{\ell+1}-2^{\ell-1}+1).
\]
\end{conjecture}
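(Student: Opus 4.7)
The plan is to mirror the proof strategies of Theorems \ref{(3,7)} and \ref{(4,13)}, combining Frankl's hereditary reduction with a Tur\'an-type clique-counting argument. By Lemma \ref{1.1}, it suffices to prove the statement for hereditary families, so I would suppose for contradiction that $\mathcal{F}\subseteq 2^{[n]}$ is hereditary with $|\mathcal{F}|\geq 1+\prod_{0\leq i<\ell}\lfloor(n+\ell+i)/\ell\rfloor$ and $|\mathcal{F}_{|T}|\leq 2^{\ell+1}-2^{\ell-1}=3\cdot 2^{\ell-1}$ for every $(\ell+1)$-set $T$. For hereditary $\mathcal{F}$ we have $\mathcal{F}_{|T}=\mathcal{F}\cap 2^T$, and since $2^{\ell+1}>3\cdot 2^{\ell-1}$ no set of size at least $\ell+1$ can lie in $\mathcal{F}$ (else all $2^{\ell+1}$ of its subsets would). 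Hence every $F\in\mathcal{F}$ satisfies $|F|\leq\ell$.

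Next, I would exploit the fact that the target bound matches a Tur\'an-type clique count: $\prod_{0\leq i<\ell}\lfloor(n+\ell+i)/\ell\rfloor=\prod_i(|X_i|+1)$ is precisely the number of cliques (including the empty one) in the Tur\'an graph $T(n,\ell)$, and by Zykov's classical symmetrization theorem this is the maximum number of cliques of any size across all $K_{\ell+1}$-free graphs on $n$ vertices. Let $G_{\mathcal{F}}$ be the graph on $[n]$ with edge set $\{\{u,v\}:\{u,v\}\in\mathcal{F}\}$. Hereditarity forces each $F\in\mathcal{F}$ to be a clique in $G_{\mathcal{F}}$, so $\mathcal{F}$ is contained in the set of $G_{\mathcal{F}}$-cliques of size at most $\ell$. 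The overarching goal is to show that $G_{\mathcal{F}}$ is (essentially) $K_{\ell+1}$-free, so that Zykov's theorem delivers the desired bound.

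To carry this out, I would proceed by induction on $n$. Pick a vertex $v$ and split $\mathcal{F}=\mathcal{F}_{-v}\sqcup\{F\cup\{v\}:F\in\mathcal{L}(v)\}$, where $\mathcal{F}_{-v}=\{F\in\mathcal{F}:v\notin F\}$ and the link $\mathcal{L}(v)=\{F\setminus\{v\}:v\in F\in\mathcal{F}\}$ are both hereditary on $[n-1]$, the latter with maximum set size $\ell-1$. The family $\mathcal{F}_{-v}$ inherits the conjecture's trace condition, so induction on $n$ gives $|\mathcal{F}_{-v}|\leq\prod_{0\leq i<\ell}\lfloor(n-1+\ell+i)/\ell\rfloor$. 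For the link, applying the trace condition to $T\cup\{v\}$ with $T\subseteq[n-1]$ of size $\ell$ yields
\[
|\mathcal{F}_{-v}\cap 2^T|+|\mathcal{L}(v)\cap 2^T|\leq 3\cdot 2^{\ell-1},
\]
and since $\mathcal{L}(v)\subseteq\mathcal{F}_{-v}$ we obtain $|\mathcal{L}(v)\cap 2^T|\leq 3\cdot 2^{\ell-2}=2^\ell-2^{\ell-2}$, which is exactly the conjecture's hypothesis at parameter $\ell-1$. An induction on $\ell$ then bounds $|\mathcal{L}(v)|$.

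The main obstacle is that naively combining the inductive bounds for $|\mathcal{F}_{-v}|$ and $|\mathcal{L}(v)|$ is too weak to recover $\prod_{0\leq i<\ell}\lfloor(n+\ell+i)/\ell\rfloor$: in the extremal family $\mathcal{F}(n,\ell)$, when $v\in X_j$ the link $\mathcal{L}(v)$ is in fact supported only on $[n]\setminus X_j$ rather than on all of $[n-1]$, so its size matches the $(\ell-1)$-bound on merely $n-|X_j|$ vertices, which is strictly smaller than the inductive bound on $n-1$ vertices. Bridging this gap requires unearthing the Tur\'an-type partition hidden inside $\mathcal{F}$: one must identify the non-edge class $N(v)=\{u:\{u,v\}\notin\mathcal{F}\}$ and show that every $u\in N(v)$ is absent from the support of $\mathcal{L}(v)$, effectively restricting $\mathcal{L}(v)$ to $[n]\setminus(N(v)\cup\{v\})$. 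This structural step is the technical heart of the argument and will likely require a shifting or compression procedure in the spirit of the squashing from Section \ref{sec 2}, together with a careful choice of $v$ (for example, one minimizing the $\mathcal{F}$-degree, or one whose class $N(v)$ is largest) so that the resulting partition matches the balanced Tur\'an structure.
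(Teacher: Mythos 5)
This statement is an open conjecture of Frankl and Wang, not a theorem of the paper: the survey records it as verified only for $\ell\leqslant 3$ (and for $\ell=3$ only when $n\geqslant 25$), so there is no proof in the paper to compare against, and your proposal must be judged as an attempt at an open problem. As written it is a strategy outline rather than a proof: you yourself flag the ``technical heart'' (recovering the $\ell$-partite Tur\'an structure from the trace condition) as unresolved, and that is precisely where the known difficulty of the problem lives.

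Beyond incompleteness, the central reduction you propose fails for $\ell\geqslant 3$. If $T$ is an $(\ell+1)$-set spanning a clique of $G_{\mathcal F}$, hereditarity only forces $\binom{T}{\leqslant 2}\subseteq\mathcal F_{|T}$, which accounts for $1+(\ell+1)+\binom{\ell+1}{2}$ sets; for $\ell\geqslant 3$ this is strictly less than $3\cdot 2^{\ell-1}$ (for instance $11<12$ when $\ell=3$), so the hypothesis $\mathcal F\not\rightarrow(\ell+1,\,3\cdot 2^{\ell-1}+1)$ does \emph{not} force $G_{\mathcal F}$ to be $K_{\ell+1}$-free, and Zykov's theorem cannot be invoked. (The implication does hold for $\ell=2$, which is exactly why the proof of Theorem~\ref{(3,7)} is so short; already the case $\ell=3$ required the substantially more involved argument of Frankl and Wang for Theorem~\ref{(4,13)}.) Your fallback induction has the quantitative gap you correctly identify, but the fix you sketch does not close it: the observation that $\mathcal L(v)$ is supported away from $N(v)=\{u:\{u,v\}\notin\mathcal F\}$ is an immediate consequence of hereditarity and gives nothing by itself. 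What would actually be needed is a lower bound of order $n/\ell$ on $|N(v)|$ for a well-chosen $v$, together with consistency of these classes across different vertices so that they assemble into an $\ell$-partition, and no argument for either is given. The proposal therefore does not constitute a proof of the conjecture.
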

Note that Conjecture~\ref{l+1} holds true for $\ell \leqslant 3$, as follows: For $\ell=1$, $(n,n+2)\rightarrow (2,4)$ is just from Sauer-Shelah Lemma; for $\ell=2$, $\left (n,\left \lfloor \frac{n^{2}}{4} \right \rfloor+n+2 \right )\rightarrow (3,7)$ is proved by Frankl~\cite{F1983}; finally, for $\ell=3$, Frankl and Wang~\cite{FW2024} show that $(n,1+\left \lfloor \frac{n+5}{3} \right \rfloor\left \lfloor \frac{n+4}{3} \right \rfloor\left \lfloor \frac{n+3}{3} \right \rfloor)\rightarrow (4,13)$ for $n\geqslant 25$.

\subsection{Shattering in antichains}
Recall that a family $\mathcal{F}$ is an \emph{antichain} if for any $F,F'\in \mathcal{F}$, $F\nsubseteq F'$ holds. 
The following longstanding conjecture was proposed by Frankl~\cite{F1989} thirty years ago.
\begin{conjecture}[Frankl~\cite{F1989}]\label{antichain}
Let $k$ be a non-negative integer and $n\geqslant 2k$. Suppose that $\mathcal{F}\subseteq 2^{[n]}$ is an antichain with $\mathcal{F} \not\rightarrow (k+1,2^{k+1})$. Then $|\mathcal{F}|\leqslant \binom{n}{k}$.
\end{conjecture}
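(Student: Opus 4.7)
My plan is to proceed by induction, running simultaneously on $n$ and $k$, with the base case $n=2k$ handled immediately by Sperner's theorem: any antichain in $2^{[2k]}$ has size at most $\binom{2k}{k}$, so the shattering hypothesis is not even needed at the bottom of the induction. For the inductive step at $n\geq 2k+1$, I would fix an element $v\in[n]$ and split
\[
\mathcal{F}_0=\{F\in\mathcal{F}:v\notin F\},\qquad \mathcal{F}_1=\{F\setminus\{v\}:v\in F\in\mathcal{F}\},
\]
so that $|\mathcal{F}|=|\mathcal{F}_0|+|\mathcal{F}_1|$. Both $\mathcal{F}_0$ and $\mathcal{F}_1$ are antichains on $[n]\setminus\{v\}$, and $\mathcal{F}_0$ visibly inherits $\mathcal{F}_0\not\rightarrow(k+1,2^{k+1})$. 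The key shattering observation is that $\mathcal{F}_0$ and $\mathcal{F}_1$ cannot simultaneously shatter a common $k$-subset $T\subseteq[n]\setminus\{v\}$, since otherwise $\mathcal{F}$ itself would shatter $T\cup\{v\}$, contradicting the hypothesis.

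In the favorable case where, for some choice of $v$, the family $\mathcal{F}_1$ shatters no $k$-subset at all, the induction hypothesis applied to $\mathcal{F}_0$ at parameters $(n-1,k)$ yields $|\mathcal{F}_0|\leq\binom{n-1}{k}$, and applied to $\mathcal{F}_1$ at parameters $(n-1,k-1)$ yields $|\mathcal{F}_1|\leq\binom{n-1}{k-1}$; Pascal's identity then closes the step to give $|\mathcal{F}|\leq\binom{n}{k}$.

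The genuine obstacle, and presumably a major reason Conjecture~\ref{antichain} has resisted attack for over thirty years, is the complementary regime where for every $v\in[n]$ the corresponding family $\mathcal{F}_1^{(v)}$ does shatter some $k$-subset. The classical Sauer--Shelah workaround---rewrite $|\mathcal{F}_0|+|\mathcal{F}_1|=|\mathcal{F}_0\cup\mathcal{F}_1|+|\mathcal{F}_0\cap\mathcal{F}_1|$ and bound each piece separately---breaks down because $\mathcal{F}_0\cup\mathcal{F}_1$ need not be an antichain (a set of the form $F\setminus\{v\}$ can strictly contain some $F'\in\mathcal{F}_0$), and more fundamentally the squash operation of Section~\ref{sec 2} destroys antichains in general, so the reduction of Lemma~\ref{1.1} is unavailable. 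What I would try next is to design an antichain-preserving compression that weakly decreases the number of shattered $(k+1)$-sets. Two natural candidates are a parity-preserving swap $F\mapsto F\triangle\{i,j\}$, applied only when it neither destroys the antichain nor introduces a new shattered $(k+1)$-set, and a Kruskal--Katona-style ``shadow push'' driving mass toward the middle level $\binom{[n]}{k}$, whose endpoint would be a uniform antichain $\mathcal{F}\subseteq\binom{[n]}{k}$ for which the bound is automatic. I expect constructing such a compression and verifying both preservation properties simultaneously to be the decisive difficulty, with the final counting step comparatively routine once the reduction is in place.
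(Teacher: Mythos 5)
Conjecture~\ref{antichain} is an open problem: the paper offers no proof of it, only the remark that it is known for $k\leqslant 2$ (Frankl) and $k=3$ (Anstee--Sali). So there is no proof in the paper to compare against, and your proposal, by your own admission, is not a proof either. The parts you do carry out are correct: Sperner's theorem disposes of $n=2k$; the split $|\mathcal{F}|=|\mathcal{F}_0|+|\mathcal{F}_1|$ at a vertex $v$ preserves the antichain property on $[n]\setminus\{v\}$; the observation that $\mathcal{F}_0$ and $\mathcal{F}_1$ cannot both shatter the same $k$-set $T$ (else $\mathcal{F}$ shatters $T\cup\{v\}$) is sound; and in the favorable case the parameter bookkeeping ($n-1\geqslant 2k$ for $\mathcal{F}_0$, $n-1\geqslant 2(k-1)$ for $\mathcal{F}_1$) and Pascal's identity do close the inductive step.

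The gap is the complementary case, which you name but do not close, and it is the whole problem. Knowing that for every $v$ the family $\mathcal{F}_1^{(v)}$ shatters some $k$-set, while no $k$-set is shattered by both $\mathcal{F}_0^{(v)}$ and $\mathcal{F}_1^{(v)}$, yields no usable upper bound on $|\mathcal{F}_1^{(v)}|$: an antichain on $n-1$ points that shatters a $k$-set can still have size on the order of $\binom{n-1}{k}$, so the naive bound $|\mathcal{F}_0|+|\mathcal{F}_1|\leqslant 2\binom{n-1}{k}$ overshoots $\binom{n}{k}$. Your two candidate repairs are speculative rather than constructed: the swap $F\mapsto F\triangle\{i,j\}$ is not well defined as a family operation when both $F$ and $F\triangle\{i,j\}$ already lie in $\mathcal{F}$, and you give no potential function showing the process terminates while monotonically controlling shattered $(k+1)$-sets; a Kruskal--Katona-style push toward the middle layer will in general create containments (the shadow of a member can land on a superset of another member), so the antichain property is exactly what such compressions destroy. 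Your diagnosis that Lemma~\ref{1.1} is unavailable because squashing ruins antichains is correct and is precisely why the conjecture has resisted attack, but a diagnosis is not a proof. As written, your argument establishes the conjecture only under the extra hypothesis that some $\mathcal{F}_1^{(v)}$ shatters no $k$-set, which is a genuine restriction; the statement remains open in general.
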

Conjecture~\ref{antichain} was proved by Frankl~\cite{F1989} for $k\leqslant 2$ and by Anstee and Sali~\cite{AS1997} for $k=3$.

\subsection{The structure of extremal families}
A family $\mathcal{F}\subseteq 2^{[n]}$ is \emph{shattering-extremal}, or \emph{s-extremal} for short, if $|tr(\mathcal{F})|=|\mathcal{F}|$.
The main problem concerning $s$-extremal families is to find good characterizations of them.
A potential approach to this problem was proposed by
M\'{e}sz\'{a}ros and R\'{o}nyai~\cite{MR2012} in the following conjecture. 
\begin{conjecture}[M\'{e}sz\'{a}ros and R\'{o}nyai~\cite{MR2012}]\label{s-extremal}
		For every nonempty s-extremal family $\mathcal{F}\subseteq 2^{[n]}$, there exists $F_{0}\in \mathcal{F}$ such that $\mathcal{F}\setminus \{F_{0}\}$ is still s-extremal.
\end{conjecture}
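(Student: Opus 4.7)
The plan is to reformulate the conjecture in terms of ``essential'' elements and then to exhibit such an element via the canonical squashing associated with s-extremal families. Call $F_0 \in \mathcal{F}$ \emph{essential for} $T \in tr(\mathcal{F})$ if there exists $S \subseteq T$ for which $F_0$ is the unique member of $\mathcal{F}$ with $F_0 \cap T = S$. Since traces only shrink under deletion, $tr(\mathcal{F} \setminus \{F_0\}) \subseteq tr(\mathcal{F})$, while Theorem~\ref{Pajor} applied to $\mathcal{F}\setminus\{F_0\}$ forces $|tr(\mathcal{F})| - |tr(\mathcal{F} \setminus \{F_0\})| \leq 1$. Hence $\mathcal{F} \setminus \{F_0\}$ is s-extremal if and only if $F_0$ is essential for exactly one shattered set. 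Moreover, $F_0$ cannot be essential for two distinct shattered sets simultaneously, since then $tr(\mathcal{F}\setminus\{F_0\})$ would omit both and contradict Pajor's bound. The conjecture therefore reduces to the following: every nonempty s-extremal $\mathcal{F}$ contains an element essential for at least one of its shattered sets.

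To construct such an element, I would exploit the identity $\tilde{\mathcal{F}} = tr(\mathcal{F})$, where $\tilde{\mathcal{F}}$ is the (unique, by Theorem~\ref{tr equ str}) squashed family of $\mathcal{F}$. Indeed, $\tilde{\mathcal{F}}$ is hereditary with $|\tilde{\mathcal{F}}|=|\mathcal{F}|$, so every $T \in \tilde{\mathcal{F}}$ is shattered by $\tilde{\mathcal{F}}$, and Proposition~\ref{squash trace} promotes this to $T \in tr(\mathcal{F})$; equal cardinalities then force the two sets to coincide. Next, pick a maximal $\tilde{F}_0 \in \tilde{\mathcal{F}}$ under set inclusion and let $F_0 \in \mathcal{F}$ be its preimage under the composed bijection $\mathcal{F} \to \tilde{\mathcal{F}}$ built from iterated squash operations. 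The target claim is that this $F_0$ is essential for the shattered set $\tilde{F}_0$, i.e.\ there exists $S \subseteq \tilde{F}_0$ for which $F_0$ is the only member of $\mathcal{F}$ meeting $\tilde{F}_0$ exactly in $S$.

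The main obstacle is that single-vertex squashing does not commute with set deletion: removing $\tilde{F}_0$ from $\tilde{\mathcal{F}}$ does not obviously correspond to removing $F_0$ from $\mathcal{F}$ after pulling through the squashes, and maximality of $\tilde{F}_0$ in a hereditary family gives only indirect control over $\mathcal{F}$ itself. To bypass this difficulty I would pursue one of three routes. (a) Induct on $n$ by splitting $\mathcal{F}$ into $\mathcal{F}_{0} = \{F \in \mathcal{F} : n \notin F\}$ and $\mathcal{F}_{1} = \{F \setminus \{n\} : F \in \mathcal{F},\, n \in F\}$ and lifting an inductive choice of $F_0$ from one half, after verifying that both halves inherit s-extremality. (b) Invoke the Bollob\'as--Leader--Radcliffe criterion that $|tr(\mathcal{F})|=|\mathcal{F}|$ iff every $\mathcal{F}(I)$ is connected, and identify $F_0$ as an inclusion-graph ``leaf'' whose removal preserves connectedness of every $(\mathcal{F}\setminus\{F_0\})(I)$ and each of its chunks. (c) Argue by contradiction: if no element of $\mathcal{F}$ were essential for any $T \in tr(\mathcal{F})$, then every fiber of each projection $F \mapsto F \cap T$ would have size at least two, and this uniform doubling should conflict with the rigid hereditary structure of $\tilde{\mathcal{F}}$. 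Given that the conjecture has resisted attack for over a decade and is trivial for hereditary $\mathcal{F}$ (where any inclusion-maximal element works), the decisive hurdle lies in controlling the gap between $\mathcal{F}$ and $\tilde{\mathcal{F}}$; this is where a genuinely new idea will be required.
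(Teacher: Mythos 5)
This statement is \cref{s-extremal}, an \emph{open conjecture} recorded in the survey's final section; the paper contains no proof of it (it notes only that M\'esz\'aros and R\'onyai verified it for families of VC dimension at most $2$), so there is no ``paper's proof'' to compare against, and your proposal does not close the problem either. The preparatory reductions you make are sound: calling $F_0$ essential for $T$ when some $S\subseteq T$ is realized as $F_0\cap T$ by $F_0$ alone, the set $T$ drops out of $tr(\mathcal{F})$ upon deleting $F_0$ exactly when $F_0$ is essential for $T$, and \cref{Pajor} applied to $\mathcal{F}\setminus\{F_0\}$ shows at most one shattered set can be lost; hence the conjecture is indeed equivalent to finding an element essential for at least one shattered set. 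The identity $\tilde{\mathcal{F}}=tr(\mathcal{F})$ for s-extremal families is also correct, via \cref{squash trace} and the equality of cardinalities.

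The genuine gap is the step you yourself flag: you never prove that the preimage $F_0$ of an inclusion-maximal $\tilde F_0\in\tilde{\mathcal{F}}$ is essential for $\tilde F_0$, and this single claim carries the entire difficulty of the conjecture. Two concrete problems arise. First, the ``preimage under the composed bijection'' is not canonical: \cref{tr equ str} guarantees that the squashed \emph{family} $S_\sigma(\mathcal{F})$ is independent of $\sigma$, but the set-level bijection $\mathcal{F}\to\tilde{\mathcal{F}}$ obtained by composing the maps $F\mapsto F_v$ can depend on the order of squashes, so even the definition of $F_0$ needs care. Second, none of your fallback routes is carried out: in (a), while the two halves of an s-extremal family are again s-extremal (they are chunks), an element whose removal preserves s-extremality of one half need not preserve it for $\mathcal{F}$, because a shattered set involving the split coordinate may depend on that element in a way the half cannot see --- this lifting failure is precisely why the induction does not close; (b) and (c) are stated as hopes rather than arguments. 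In short, your write-up is a correct reformulation plus a research plan with an explicitly acknowledged missing idea, which is consistent with the statement's status in the paper as an unresolved conjecture.
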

By Theorem~\ref{tr equ str}, we know that $\mathcal{F}$ is $s$-extremal if and only if $2^{[n]} \setminus \mathcal{F}$ is also $s$-extremal.
Therefore, it is equivalent to consider whether there exists $F_{0}\notin \mathcal{F}$ such that $\mathcal{F}\cup \{F_{0}\}$ is s-extremal. 
It is obvious that hereditary families trivially satisfy Conjecture \ref{s-extremal}. 
In \cite{MR2012, MR2014}, M\'{e}sz\'{a}ros and R\'{o}nyai confirmed this conjecture for all families with VC dimension at most 2.

\subsection{Forbidden traces of complete graphs}

Let $K_{s}^{(t)}$ be the complete $s$-vertex $t$-uniform hypergraph. Recall that for a hypergraph $F$, we let $\operatorname{Tr}_{r}(F)$ denote the set of all $r$-uniform induced Berge copies of $F$ up to isomorphism. Consider the family $\operatorname{Tr}_{r}(K_{s}^{(t)})$, let $H_{t,s}^{r}$ be the member of $\operatorname{Tr}_{r}(K_{s}^{(t)})$ with the maximum number of vertices. In other words, $H_{t,s}^{r}$ is the $r$-uniform hypergraph obtained from $K_{s}^{(t)}$ by adding $r-t$ new vertices to each hyperedge such that each new vertex is added to only one hyperedge. In particular, $H_{r,s}^{r}=K_{s}^{(r)}$.

By definition, we have $\operatorname{ex}(n, \operatorname{Tr}_{r}(K_{s}^{(t)}))\leqslant \operatorname{ex}(n, H_{t,s}^{r})$. In order to determine the value of $\operatorname{ex}(n, \operatorname{Tr}_{r}(K_{s}^{(t)}))$, Mubayi and Zhao~\cite{MZ2007} presented the following conjecture.
\begin{conjecture}[Mubayi and Zhao~\cite{MZ2007}]\label{turan trace conj}
Given positive integers $n$, $r$, $s$ and $t$ with $2\leqslant t<\min \{r,s\}$. There exists $n_{0}>0$ such that for $n>n_{0}$, we have
\[
\operatorname{ex}(n, \operatorname{Tr}_{r}(K_{s}^{(t)}))=
\begin{cases}
\operatorname{ex}(n, H_{t,s}^{r}) &\mbox{ if }r<s, \\
\operatorname{ex}(n-r+s-1, \operatorname{Tr}_{s-1}(K_{s}^{(t)}))=\operatorname{ex}(n-r+s-1, H_{t,s}^{s-1}) &\mbox{ if }r\geqslant s.
\end{cases}
\]
\end{conjecture}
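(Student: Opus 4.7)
The plan is to split along the two cases. For $r<s$, the upper bound $\operatorname{ex}(n,\operatorname{Tr}_{r}(K_{s}^{(t)}))\leq \operatorname{ex}(n,H_{t,s}^{r})$ is immediate, since $H_{t,s}^{r}$ itself lies in $\operatorname{Tr}_{r}(K_{s}^{(t)})$: any $r$-uniform hypergraph avoiding every member of the family in particular avoids $H_{t,s}^{r}$. The substantive task is the matching lower bound, for which I would begin with a known extremal $H_{t,s}^{r}$-free hypergraph $\mathcal{H}^{*}$ in the Mubayi--Zhao style (typically based on an appropriate partition of $[n]$ into blocks with controlled intersection patterns) and verify that it contains no copy of any hypergraph in $\operatorname{Tr}_{r}(K_{s}^{(t)})$. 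The verification reduces to a case analysis: an induced Berge copy of $K_{s}^{(t)}$ spans at most $s+\binom{s}{t}(r-t)$ vertices, and for each way to distribute the extra vertices among the $\binom{s}{t}$ hyperedges one shows that the resulting configuration would already produce a copy of $H_{t,s}^{r}$ inside $\mathcal{H}^{*}$, contradicting the block structure.

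For $r\geq s$ I would first prove $\operatorname{ex}(n,\operatorname{Tr}_{r}(K_{s}^{(t)}))\leq \operatorname{ex}(n-r+s-1,\operatorname{Tr}_{s-1}(K_{s}^{(t)}))$ by a ``vertex--shrinking'' argument: in a $\operatorname{Tr}_{r}(K_{s}^{(t)})$-free $\mathcal{H}$ on $n$ vertices, locate $r-s+1$ low-degree vertices whose removal decreases the edge count negligibly, and check that the shadow of the surviving edges into uniformity $s-1$ must itself be $\operatorname{Tr}_{s-1}(K_{s}^{(t)})$-free, for otherwise reattaching the deleted vertices as private vertices would recreate an induced Berge copy of $K_{s}^{(t)}$ inside $\mathcal{H}$. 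The reverse inequality comes from the blow-up construction: take an extremal hypergraph on $n-r+s-1$ vertices for the $(s-1)$-uniform problem and enlarge each edge by attaching $r-s+1$ private fresh vertices; this realizes $\operatorname{ex}(n-r+s-1,H_{t,s}^{s-1})$ edges and the privacy of the extra vertices prevents any new induced Berge copies of $K_{s}^{(t)}$ from appearing. Finally, $\operatorname{ex}(n-r+s-1,\operatorname{Tr}_{s-1}(K_{s}^{(t)}))=\operatorname{ex}(n-r+s-1,H_{t,s}^{s-1})$ is exactly Case~1 applied at uniformity $s-1$ (legal because $s-1<s$), which closes the chain.

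The principal obstacle is the lower bound in Case~1, namely confirming that the candidate extremal $H_{t,s}^{r}$-free hypergraph really avoids \emph{every} member of $\operatorname{Tr}_{r}(K_{s}^{(t)})$, not merely the maximal one $H_{t,s}^{r}$. A direct edge-by-edge check is combinatorially unwieldy because $\operatorname{Tr}_{r}(K_{s}^{(t)})$ is large, parameterizing all ways to distribute the ``extension'' vertices across the $\binom{s}{t}$ hyperedges with arbitrary multiplicities and identifications. The most promising route is a stability argument: assume $\mathcal{H}$ is an extremal $\operatorname{Tr}_{r}(K_{s}^{(t)})$-free hypergraph whose size is within $o(n^{r-1})$ of $\operatorname{ex}(n,H_{t,s}^{r})$, invoke a stability theorem for $\operatorname{ex}(n,H_{t,s}^{r})$ to force $\mathcal{H}$ into small edit distance from the Mubayi--Zhao construction, and then rule out any local deviation by observing that the only way to insert an extra edge would create a \emph{shorter} induced Berge copy of $K_{s}^{(t)}$ with fewer attached private vertices. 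Extracting (or establishing) the right stability statement for the larger target $H_{t,s}^{r}$ is likely the hardest technical piece and is the step where the conjecture is most delicate.
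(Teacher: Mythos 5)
This statement is an open conjecture of Mubayi and Zhao, not a theorem: the paper offers no proof, and records only the special cases that are known ($t=2$ with $r<s$; $t=2$, $r=s=3$; and $t=2$, $s=3$ for all $r$ and large $n$, due to Gerbner). Your proposal is therefore being measured against a result that nobody has proved, and as written it is a strategy sketch with genuine gaps rather than a proof. The central difficulty, which you correctly identify but do not resolve, is the lower bound in the case $r<s$: one must exhibit a hypergraph with $\operatorname{ex}(n,H_{t,s}^{r})$ edges avoiding \emph{every} member of $\operatorname{Tr}_{r}(K_{s}^{(t)})$, i.e., all the degenerate induced Berge copies in which extension vertices are shared between hyperedges, and these are not subhypergraphs of $H_{t,s}^{r}$. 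Your plan presupposes that the extremal $H_{t,s}^{r}$-free hypergraphs (or at least a stability theorem for them) are known; for general $t\geq 3$ the function $\operatorname{ex}(n,H_{t,s}^{r})$ is itself an unsolved hypergraph Tur\'an problem, so there is no ``known extremal construction'' to verify against and no stability statement to invoke. This is exactly where the conjecture is open, and your proposal does not supply the missing idea.

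There are also concrete errors in the case $r\geq s$. Your lower-bound construction ``enlarge each edge by attaching $r-s+1$ private fresh vertices'' is not feasible: only $r-s+1$ new vertices are available in total (since the base hypergraph lives on $n-r+s-1$ vertices), so the extra vertices cannot be private to each edge. The intended construction, which is what makes Gerbner's result $\operatorname{ex}(n,\operatorname{Tr}_{r}(K_{3}))=\lfloor (n-r+2)^{2}/4\rfloor$ work, adds the \emph{same} set of $r-s+1$ new vertices to every edge; one then still has to verify that this blocks all induced Berge copies, which is a real argument (the new common vertices cannot serve as extension vertices for an edge whose intersection with the core is required to be exactly a $t$-set). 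For the upper bound, deleting ``low-degree vertices whose removal decreases the edge count negligibly'' cannot establish the exact equality the conjecture asserts, and passing to the $(s-1)$-uniform shadow does not preserve edge counts (each $r$-edge has $\binom{r}{s-1}$ shadow edges, with collisions), so the claimed comparison $\operatorname{ex}(n,\operatorname{Tr}_{r})\leq\operatorname{ex}(n-r+s-1,\operatorname{Tr}_{s-1})$ does not follow from your argument. In short, the proposal reproduces the known easy inequality $\operatorname{ex}(n,\operatorname{Tr}_{r}(K_{s}^{(t)}))\leq\operatorname{ex}(n,H_{t,s}^{r})$ and the general shape of the expected extremal constructions, but it does not close any of the gaps that keep this conjecture open.
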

Mubayi and Zhao~\cite{MZ2007} observed that for $r<s$ and sufficiently large $n$, $T_{s-1}^{r}$ is an extremal family for both $\operatorname{ex}(n, \operatorname{Tr}_{r}(K_{s}))$ and $\operatorname{ex}(n, H_{2,s}^{r})$, where $T_{\ell}^{r}$ is the complete $\ell$-partite $r$-uniform hypergraph. 
In this case, it holds that $\operatorname{ex}(n, \operatorname{Tr}_{r}(K_{s}))=\operatorname{ex}(n, H_{2,s}^{r})$. 
So Conjecture \ref{turan trace conj} holds for the cases when $t=2$ and $r<s$.
As for the cases when $r\geqslant s$, Mubayi and Zhao~\cite{MZ2007} showed that Conjecture \ref{turan trace conj} holds for $t=2$ and $r=s=3$, that is, $\operatorname{ex}(n, \operatorname{Tr}_{3}(K_{3}))=\operatorname{ex}(n-1, K_{3})=\left \lfloor \frac{(n-1)^{2}}{4} \right \rfloor.$
Besides, they also showed that $\operatorname{ex}(n, \operatorname{Tr}_{r}(K_{3}))=n^{2}/2+o(n^{2})$, which confirmed Conjecture \ref{turan trace conj} asymptotically if $s=3$ and $t=2$. Later, Gerbner~\cite{G2023} solved the case when $t=2$ and $s=3$, by showing for sufficiently large $n$, $\operatorname{ex}(n, \operatorname{Tr}_{r}(K_{3}))=\left \lfloor \frac{(n-r+2)^{2}}{4} \right \rfloor.$

\bibliographystyle{spmpsci} 
\bibliography{references} 

\end{document}